\documentclass[11pt]{article}
\usepackage[a4paper,margin=25mm]{geometry}
\usepackage{cmap}
\usepackage[T1]{fontenc}
\usepackage{lmodern,amsmath,amssymb,graphicx,booktabs,microtype,placeins}
\usepackage{amsthm}
\usepackage[labelfont=bf,labelsep=quad,font=small]{caption}
\usepackage[colorlinks=true,allcolors=blue]{hyperref}
\newcommand{\dd}{\mathrm{d}}
\graphicspath{{figures/}}
\title{Designing collective behaviour within a fixed coarse-grained description}
\author{Chuling Wen$^{1}$ and Jian Lu$^{1,2,*}$\\[4pt]
\small $^1$School of Mathematical Sciences, Shenzhen University, Shenzhen 518060, China\\
\small $^2$National Center for Applied Mathematics Shenzhen, Shenzhen 518055, China\\
\small $^*$Corresponding author: \href{mailto:jianlu@szu.edu.cn}{jianlu@szu.edu.cn}}
\date{}
\newtheorem{theorem}{Theorem}
\newtheorem{proposition}[theorem]{Proposition}
\newtheorem{corollary}[theorem]{Corollary}
\newtheorem{lemma}[theorem]{Lemma}

\theoremstyle{remark}
\newtheorem{remark}[theorem]{Remark}
\theoremstyle{plain}

\newcommand{\diag}{\operatorname{diag}}

\newcommand{\Real}{\operatorname{Re}}
\newcommand{\Imag}{\operatorname{Im}}
\newcounter{suppnote}
\newcommand{\suppnote}[1]{\refstepcounter{suppnote}\section*{Supplementary Note~\thesuppnote: #1}\addcontentsline{toc}{section}{Supplementary Note \thesuppnote: #1}\setcounter{subsection}{0}}
\newcommand{\TabRank}{%
\begin{tabular}{p{4.1cm}rrrrrp{3.1cm}}
\toprule
hidden coefficients $\theta$ & $\dim\theta$ & points & rank 3 & $\sigma_2/\sigma_1$ & $\sigma_3/\sigma_1$ & null directions\\
\midrule
$(\eta_2,\eta_{11},\beta_3,\beta_5)$ & 4 & 200 & 200/200 & $9.0\times10^{-2}$ & $1.3\times10^{-2}$ & $\beta_5$\\
degrees 2, 3 in both equations, $U^5$, $V^5$ & 16 & 200 & 200/200 & $4.9\times10^{-2}$ & $4.7\times10^{-3}$ & $U^5$, $V^5$ and 11 combinations\\
degrees 2, 3, and $U^4$, $V^4$, $U^5$, $V^5$, in both equations & 22 & 500 & 500/500 & $5.0\times10^{-2}$ & $6.2\times10^{-3}$ & 8 quartic/quintic and 11 combinations\\
\bottomrule
\end{tabular}}

\newcommand{\TabSilent}{%
\begin{tabular}{rlll}
\toprule
$\beta_5$ & $\eta_2=0$, stripe-seeded & $\eta_2=0.09$, stripe-seeded & $\eta_2=0.09$, hexagon-seeded\\
\midrule
0 & stripe 0.0744 & stripe 0.0763 & hexagon 0.0402\\
0.5 & stripe 0.0740 & stripe 0.0759 & hexagon 0.0398\\
1 & stripe 0.0737 & stripe 0.0755 & hexagon 0.0395\\
2 & stripe 0.0731 & stripe 0.0748 & hexagon 0.0388\\
3 & stripe 0.0725 & stripe 0.0742 & hexagon 0.0382\\
5 & stripe 0.0714 & stripe 0.0729 & hexagon 0.0372\\
8 & stripe 0.0700 & stripe 0.0713 & hexagon 0.0359\\
12 & stripe 0.0683 & stripe 0.0695 & hexagon 0.0346\\
\midrule
leading order & stripe 0.0744 & stripe 0.0770 & hexagon 0.0410\\
\bottomrule
\end{tabular}}

\newcommand{\TabTwins}{%
\begin{tabular}{lrrrllrrr}
\toprule
target & $A_{\rm s}$ & $x$ & $h/g$ & seed & pattern & in main state & s.d.\ (\%) & offset (\%)\\
\midrule
T1 & 0.070 & 0.30 & 2.4 & hexagon & stripe & 4/4 & 0.58 & $-$5.9\\
T1 & 0.070 & 0.30 & 2.4 & stripe & stripe & 5/5 & 0.74 & $-$6.1\\
T2 & 0.060 & 0.70 & 2.0 & hexagon & hexagon & 5/5 & 1.83 & $-$0.6\\
T2 & 0.060 & 0.70 & 2.0 & stripe & stripe & 5/5 & 0.80 & $-$1.2\\
T3 & 0.080 & 0.90 & 2.6 & hexagon & hexagon & 3/5 & 0.11 & $-$6.1\\
T3 & 0.080 & 0.90 & 2.6 & stripe & stripe & 5/5 & 5.16 & $-$10.9\\
T4 & 0.060 & 2.00 & 2.2 & hexagon & hexagon & 5/5 & 0.70 & $-$15.5\\
T4 & 0.060 & 2.00 & 2.2 & stripe & hexagon & 5/5 & 0.70 & $-$15.5\\
T5 & 0.060 & 0.00 & 2.2 & hexagon & stripe & 4/4 & 1.38 & $-$2.1\\
T5 & 0.060 & 0.00 & 2.2 & stripe & stripe & 5/5 & 3.63 & $-$3.8\\
T6 & 0.050 & 1.20 & 2.4 & hexagon & hexagon & 5/5 & 0.85 & $-$6.9\\
T6 & 0.050 & 1.20 & 2.4 & stripe & stripe & 5/5 & 1.05 & $-$5.6\\
\bottomrule
\end{tabular}}

\newcommand{\TabNRQS}{%
\begin{tabular}{rrrrrrrrr}
\toprule
 & & & & & & \multicolumn{3}{c}{minimum budget}\\
\cmidrule(l){7-9}
$M$ & hidden dim. & $\Gamma_b$ & $D_M^{(33)}$ & area & reachable & B & P & F\\
\midrule
2 & 30 & 2 & $5.31\times10^{-2}$ & $2.52\times10^{-3}$ & 1.000 & 0.0258 & 0.0337 & 0.0288\\
3 & 29 & 2 & $4.28\times10^{-2}$ & $1.28\times10^{-3}$ & 1.000 & 0.0482 & 0.0463 & 0.0367\\
4 & 28 & 2 & $2.71\times10^{-2}$ & $6.00\times10^{-4}$ & 1.000 & 0.0983 & 0.101 & 0.0812\\
5 & 27 & 2 & $1.35\times10^{-2}$ & $2.32\times10^{-4}$ & 1.000 & 0.214 & 0.233 & 0.19\\
6 & 26 & 2 & $5.39\times10^{-3}$ & $7.36\times10^{-5}$ & 1.000 & 0.52 & 0.59 & 0.486\\
7 & 25 & 2 & $1.95\times10^{-3}$ & $2.40\times10^{-5}$ & 0.245 & 1.07 & 1.24 & 1.03\\
8 & 24 & 2 & $1.12\times10^{-3}$ & $1.23\times10^{-5}$ & 0.000 & 1.32 & 1.56 & 1.29\\
9 & 23 & 2 & $1.07\times10^{-3}$ & $6.87\times10^{-6}$ & 0.000 & 1.33 & 1.56 & 1.29\\
10 & 22 & 2 & $1.06\times10^{-3}$ & $3.14\times10^{-6}$ & 0.000 & 1.92 & 2.14 & 1.75\\
\bottomrule
\end{tabular}}

\newcommand{\TabTrunc}{%
\begin{tabular}{r*{9}{r}}
\toprule
harmonics $H$ & \multicolumn{9}{c}{fraction of the target box reachable within $\|h\|_\infty<1$, $M=2,\ldots,10$}\\
\midrule
9 & 1.00 & 1.00 & 0.90 & 0.01 & 0.00 & -- & -- & -- & --\\
13 & 1.00 & 1.00 & 1.00 & 1.00 & 0.08 & 0.00 & 0.00 & 0.00 & 0.00\\
17 & 1.00 & 1.00 & 1.00 & 1.00 & 1.00 & 0.00 & 0.00 & 0.00 & 0.00\\
25 & 1.00 & 1.00 & 1.00 & 1.00 & 1.00 & 0.07 & 0.00 & 0.00 & 0.00\\
33 & 1.00 & 1.00 & 1.00 & 1.00 & 1.00 & 0.24 & 0.00 & 0.00 & 0.00\\
\bottomrule
\end{tabular}}

\newcommand{\TabSeeds}{%
\begin{tabular}{lrrr}
\toprule
initial condition & kernel--IC pairs & same pattern, three seeds & max.\ relative amplitude range\\
\midrule
stripe seed & 16 & 16 & $1.1\times10^{-5}$\\
hexagon seed & 16 & 16 & $1.8\times10^{-8}$\\
noise & 16 & 11 & $4.2\times10^{-2}$\\
\bottomrule
\end{tabular}}

\newcommand{\TabKinetic}{%
\begin{tabular}{lrrrrrr}
\toprule
target & $M$ & $I^*\ (\times10^{-3})$ & $\Omega^*$ & budget & power error (\%) & $\Delta\Omega\ (\times10^{-6})$\\
\midrule
B & 2 & 0.50000 & 0.29740 & 0.035 & $+$0.293 & $-$5.7\\
P & 2 & 0.60000 & 0.29740 & 0.064 & $+$0.299 & $-$7.3\\
F & 2 & 0.50000 & 0.29760 & 0.062 & $+$0.242 & $-$4.9\\
B & 3 & 0.50000 & 0.29740 & 0.148 & $+$0.335 & $-$5.9\\
P & 3 & 0.60000 & 0.29740 & 0.074 & $+$0.309 & $-$7.3\\
F & 3 & 0.50000 & 0.29760 & 0.063 & $+$0.245 & $-$4.9\\
B & 4 & 0.50000 & 0.29740 & 0.805 & $+$0.435 & $-$6.8\\
P & 4 & 0.60000 & 0.29740 & 0.556 & $+$0.362 & $-$8.0\\
F & 4 & 0.50000 & 0.29760 & 0.389 & $+$0.280 & $-$5.3\\
\midrule
B$^{33}$ & 5 & 0.50000 & 0.29740 & 0.214 & $+$0.927 & $-$28.8\\
P$^{33}$ & 5 & 0.60000 & 0.29740 & 0.233 & $+$0.673 & $-$29.1\\
F$^{33}$ & 5 & 0.50000 & 0.29760 & 0.190 & $+$0.481 & $-$18.3\\
B$^{33}$ & 6 & 0.50000 & 0.29740 & 0.520 & $-$0.273 & $-$37.3\\
P$^{33}$ & 6 & 0.60000 & 0.29740 & 0.590 & $-$0.765 & $-$34.0\\
F$^{33}$ & 6 & 0.50000 & 0.29760 & 0.486 & $-$0.765 & $-$20.1\\
\midrule
H0 & 3 & 0.69755 & 0.29772 & 0.344 & $+$0.301 & $-$8.3\\
H0 & 4 & 0.69755 & 0.29772 & 0.398 & $+$0.289 & $-$8.0\\
H1 & 3 & 0.50300 & 0.29759 & 0.063 & $+$0.247 & $-$5.0\\
H1 & 4 & 0.50300 & 0.29759 & 0.395 & $+$0.283 & $-$5.4\\
H2 & 3 & 0.58771 & 0.29772 & 0.207 & $+$0.247 & $-$6.0\\
H2 & 4 & 0.58771 & 0.29772 & 0.208 & $+$0.246 & $-$6.0\\
H3 & 3 & 0.62047 & 0.29748 & 0.098 & $+$0.282 & $-$7.2\\
H3 & 4 & 0.62047 & 0.29748 & 0.333 & $+$0.312 & $-$7.7\\
H4 & 3 & 0.57417 & 0.29737 & 0.089 & $+$0.327 & $-$7.1\\
H4 & 4 & 0.57417 & 0.29737 & 0.690 & $+$0.400 & $-$7.9\\
H5 & 3 & 0.64795 & 0.29731 & 0.081 & $+$0.338 & $-$8.6\\
H5 & 4 & 0.64795 & 0.29731 & 0.625 & $+$0.399 & $-$9.4\\
H6 & 3 & 0.64890 & 0.29740 & 0.090 & $+$0.305 & $-$8.1\\
H6 & 4 & 0.64890 & 0.29740 & 0.427 & $+$0.344 & $-$8.7\\
H7 & 3 & 0.51563 & 0.29746 & 0.085 & $+$0.298 & $-$5.8\\
H7 & 4 & 0.51563 & 0.29746 & 0.641 & $+$0.367 & $-$6.5\\
\bottomrule
\end{tabular}}

\renewcommand{\figurename}{Fig.}
\begin{document}
\maketitle

\begin{abstract}
\noindent
A fixed coarse-grained description need not uniquely determine the underlying microscopic
rules\cite{wilson,marchetti,bechinger,noid}. Variations in these rules can alter collective behaviour
while leaving the retained observables unchanged. Here we develop a method to identify and use such
variations for collective-state design. We construct families of microscopic models with exactly
matched coarse observables and use the response derivative within each family to identify
independent first-order output changes. In a nonreciprocal active mixture\cite{fruchart,you,duan}, the largest first-order change in
interfacial growth rate under a pointwise kernel budget also determines the leading minimax error of
predictions based only on the matched data. Near a Hopf bifurcation, this response map gives an
initial turning-kernel design, which is refined by jointly solving for the periodic kinetic state
and the kernel. Fixed-kernel integrations realize prescribed oscillation powers and frequencies while
preserving the selected angular relaxation rates. In a reaction--diffusion
family\cite{turing,crosshohenberg}, cubic amplitude equations guide the design of stripe and hexagon
amplitudes while the full linear operator remains fixed. Additional constraints reduce the available
response even when its rank is unchanged, whereas the matched angular moments retain their exact
relaxation rates. These results provide a constructive route to collective-state design within a
prescribed coarse-grained description.
\end{abstract}

\noindent
Complex systems are understood through coarse variables that keep collective properties and
discard microscopic detail\cite{wilson,marchetti,bechinger,noid}. This compression makes
effective theories possible, and renormalization and projection-operator methods have made it
precise for prediction: they identify which details matter for the retained variables and turn
the rest into memory and noise\cite{machta,transtrum,zwanzig,mori}. Many microscopic realizations
thereby become one coarse description.

In such a theory, two realizations with the same coarse description are treated as the same
system. A self-organized interface, a pattern that grows from an instability or an oscillation that
saturates at finite amplitude can, however, depend on microscopic rules to which the coarse
variables are insensitive, so that two systems with the same coarse description reach different
collective states. The microscopic freedom that the coarse description discards then acts as a set
of dials for the collective state.

Formally, a coarse description is a map $\mathcal C:\mathcal M\to\mathcal B$
from microscopic models to selected observables, and the realizations it cannot tell apart are the
preimage of one value, the fibre $\mathcal F_b=\{m:\mathcal C(m)=b\}$. At a regular point,
$\ker D\mathcal C$ is the tangent space of the fibre and consists of perturbations that preserve the
coarse observables to first order. The fibre is a hidden design space whenever a collective
response $R$ varies along it. Using it raises three questions: which collective functions vary
along the fibre, and how many of them independently; how large a change of the microscopic rule
each requires; and which are fixed by the coarse description itself.

We address these questions in two systems with different retained descriptions. In a
nonreciprocal active mixture the coarse description is a finite list of angular relaxation rates; in
a reaction--diffusion system it is the entire linear operator. Both examples use exactly matched
families and local response maps for inverse design\cite{torquato,sherman,molesky}. In the active
mixture, a pointwise kernel budget quantifies the available response, and the kernel is refined
together with the periodic state. In the reaction--diffusion family, the amplitude-equation
coefficients guide the choice of nonlinear reaction terms.

\section*{Bulk matching and interfacial response}
Run-and-tumble particles in a nonreciprocal quorum-sensing
mixture\cite{fruchart,saha,you,brauns,duanprl,duan,dinelliNR,tailleur08}, of the kind realised
with light-controlled colloids\cite{baeuerle,lefranc}, move at a speed set by the sensed densities
and reorient by jumps drawn from an even turning kernel $q(\varphi)$. The kernel enters the
kinetic equation only through its angular relaxation rates $\gamma_m$, and the bulk depends on these
rates in a fixed order. The angular harmonics form a ladder coupled by spatial transport, a path
from the density mode to harmonic $m$ and back needs $2m$ spatial derivatives, and the density
branch of the uniform state has the expansion
\begin{equation}
 \lambda(k)=\sum_{j\geq1}\ell_j k^{2j},\qquad \ell_j=\ell_j(\gamma_1,\ldots,\gamma_j).
 \label{eq:bulkblind}
\end{equation}
Kernels that share $\gamma_1,\ldots,\gamma_M$ are therefore indistinguishable in the bulk through
order $k^{2M}$ and generically first differ at $k^{2M+2}$ (Fig.~\ref{fig:one}b; Methods). The same
matching fixes the coefficients of the formal nonlinear long-wave closure through gradient order
$2M$ (Supplementary Note~2). The known
equivalence of active Brownian and run-and-tumble particles\cite{cates,solon,dinelli} is the case
$M=1$.

An interface carries order-one gradients across its width, so the gradient expansion that orders the bulk has no small parameter there, and every angular sector is
in play. The sensitivity of the band's leading eigenvalue to the rates is dominated by the second
harmonic, and most of it arises from the self-consistent reshaping of the band that a change of
$\gamma_2$ induces rather than from the damping itself (Extended Data Fig.~5a). Bulk diffusion, the
$k^2$ term, does not depend on $\gamma_2$, whereas the interface is most sensitive to it. We
therefore compare strictly positive kernels with identical $\gamma_1$ and $\gamma_2$
(Fig.~\ref{fig:one}a) in the same mixture, at the same composition and with the same speed rule.
Under one kernel the band travels steadily; under another it breathes about a fixed mean shape
(Fig.~\ref{fig:one}c). The coarse
description does not determine the collective state.

\begin{figure}[t]
\centering
\includegraphics[width=\linewidth]{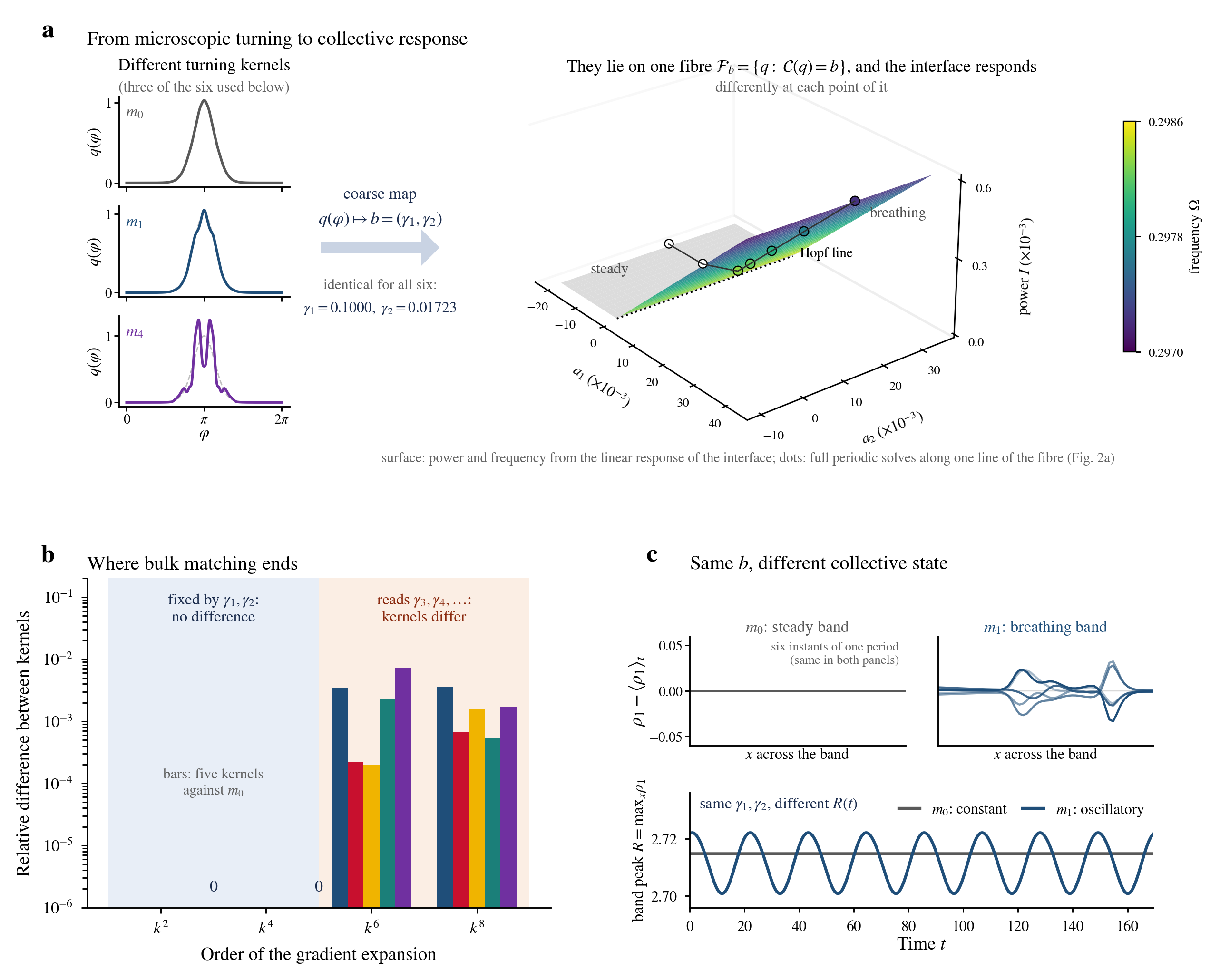}
\caption{\textbf{One coarse description, many collective states.}
\textbf{a}, Strictly positive turning kernels with identical normalization, $\gamma_1$ and
$\gamma_2$ (three shown against the base kernel $q_0$, dashed) are sent by the coarse map to one
value $b$ and lie on its fibre. Right, the interface's response over two hidden coordinates
$(a_1,a_2)$ on the fibre: height is the oscillation power and colour the frequency predicted from
the linear response of the band; dots are full periodic solves of the kinetic equations along one
line of the fibre (open, steady side; filled, oscillatory side).
\textbf{b}, Where bulk blindness ends. Relative difference between the coefficients of the
gradient expansion of the bulk density dispersion (Methods) for five kernels and the reference: the
coefficients of $k^2$ and $k^4$ are fixed by $\gamma_1$ and $\gamma_2$ and coincide exactly; the
first unmatched rate enters at $k^6$, where the kernels differ.
\textbf{c}, Different interfacial dynamics. Top, density of species 1 across the band minus its
time average, at six instants of one period, under the first two kernels of \textbf{a}: flat under
$m_0$, a standing wave under $m_1$. Bottom, the band peak against time: constant under $m_0$,
oscillatory under $m_1$ (forward integrations of the kinetic equations).}
\label{fig:one}
\end{figure}

\section*{Response capacity and prediction loss}
We write kernels as $q=q_0(1+h)$, with $q_0$ the reference kernel, and measure a change by the
pointwise budget $\varepsilon=\|h\|_\infty$; a budget below one keeps the kernel positive. The
outputs are the oscillation power $I$, the variance of the spatially aligned density about its time
average, and the frequency $\Omega$ of the breathing band. We quantify the local response by its rank
and by its magnitude under this budget. The rank is the local response capacity
\begin{equation}
 \Gamma_b=\operatorname{rank}\big(D\Phi|_{\ker D\mathcal C}\big),\qquad \Phi=(I,\Omega),
 \label{eq:capacity}
\end{equation}
the number of independent first-order output changes within the fibre; for the breathing band it is
two. Along a line of kernels inside the fibre that crosses the Hopf point of the band, the power and
frequency follow the linear response map smoothly, and the periodic orbits on the oscillatory side
are stable within the one-dimensional periodic cell (Fig.~\ref{fig:two}a). The two directions of the linear
response act on growth rate and frequency separately, and the responses reachable within a
kernel budget form a two-dimensional region whose size is set by that budget; three prescribed
targets require kernel budgets of 2.6--3.4\% (Fig.~\ref{fig:two}b).

For the interfacial growth rate, the largest first-order response that a unit budget can still
produce after $M$ rates are matched is
\begin{equation}
 D_M=\inf_a\int q_0\,|\Psi-a\cdot C| ,
 \label{eq:DM}
\end{equation}
with $\Psi$ the sensitivity density of the interfacial growth rate and $C=(1,\cos\varphi,\ldots,
\cos M\varphi)$ the matched moments. At the working point $D_M$ falls from 0.17 to 0.04 as $M$
increases from 0 to 4 (Fig.~\ref{fig:two}c): each matched rate lowers $D_M$, which stays finite. Any predictor that uses only the matched rates returns one value on the whole
fibre, and its smallest worst-case error on kernels within a budget $\varepsilon$ is
$\varepsilon D_M$ to leading order\cite{micchelli} (Supplementary Note~4). $D_M$ thus bounds both the
growth-rate response available within a budget and the error of coarse prediction; for power and
frequency together the corresponding object is the reachable set of Fig.~\ref{fig:two}b
(Supplementary Note~5).

\begin{figure}[t]
\centering
\includegraphics[width=\linewidth]{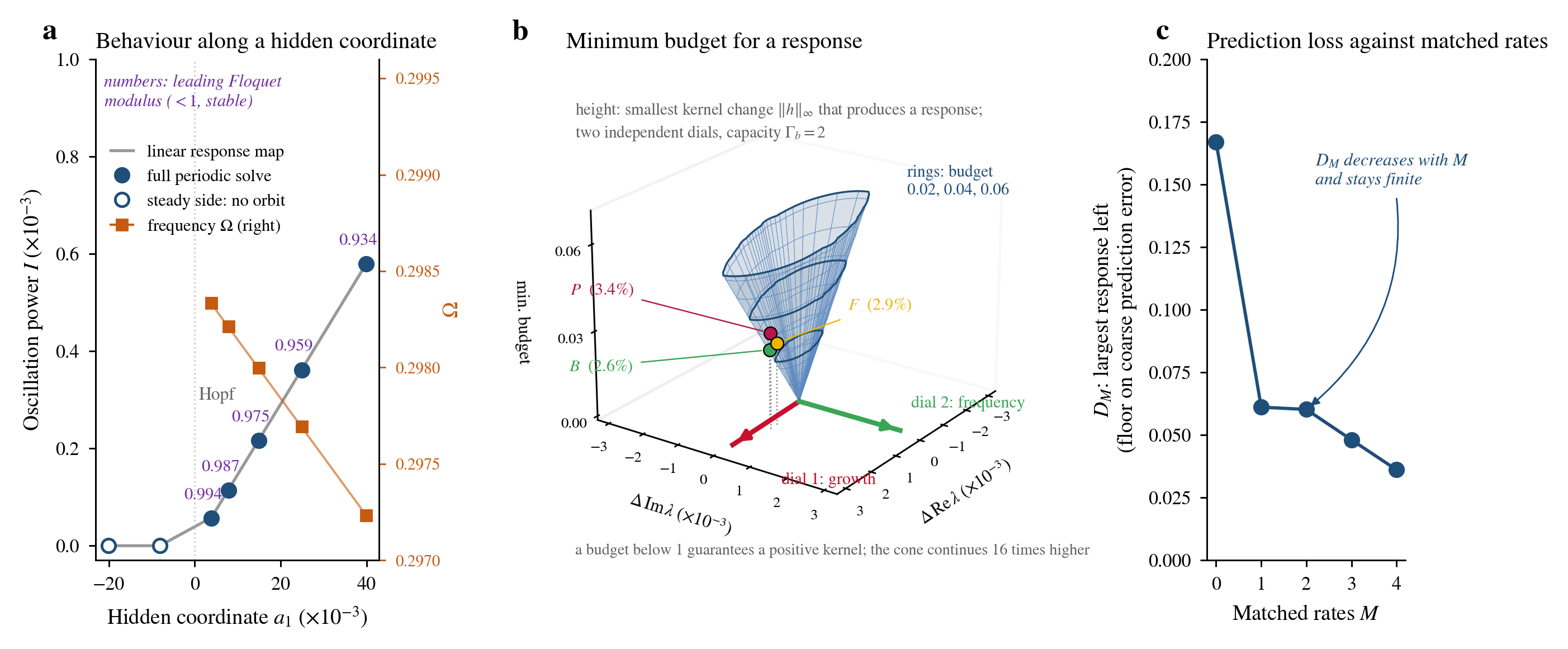}
\caption{\textbf{Response capacity and prediction loss.}
\textbf{a}, Oscillation power (left axis) and frequency (right axis) of the band along a line of
kernels inside the fibre of $(\gamma_1,\gamma_2)$ that crosses the Hopf point: linear response
map (lines), full periodic solves of the kinetic equations (symbols) and the leading Floquet
modulus of each orbit (labels); on the steady side no orbit is found on the tracked branch.
\textbf{b}, Minimum kernel budget. Height is the smallest kernel change $\|h\|_\infty$ that
produces a given change of growth rate and frequency, with two rates matched, in a 33-harmonic
family at the Hopf reference; the two arrows in the base plane are the responses of the two hidden
directions, one acting on growth rate and one on frequency alone ($\Gamma_b=2$). Three prescribed
targets of power and frequency (B, P, F) sit at the budgets they require, 2.6--3.4\%; a budget
below one guarantees a positive kernel.
\textbf{c}, Prediction loss $D_M$ of Eq.~\eqref{eq:DM} at the working point against the number of
matched rates.}
\label{fig:two}
\end{figure}

\section*{A reaction--diffusion fibre}
In the active mixture the coarse description stops at a finite order, and a finer description
would resolve part of the fibre. The reaction--diffusion family
\begin{equation}
 \partial_tw=Jw+D\nabla^2w+N_\theta(w),\qquad N_\theta(0)=0,\quad DN_\theta(0)=0,
 \label{eq:rd}
\end{equation}
with $w=(U,V)$, fixed $J$ and $D$, and any polynomial nonlinearity vanishing to second order at
the uniform state, has no such truncation\cite{turing,crosshohenberg}. Every member has the same
linear operator and hence the same dispersion relation at every wavenumber
(Fig.~\ref{fig:three}a), and even a complete linear-stability measurement cannot tell the members
apart. A reaction scheme expanded about its homogeneous state has this structure, with
$J$ and $D$ the linearized kinetics and transport and the coefficients of $N_\theta$ the feedbacks
of the autocatalytic step and its saturation\cite{grayscott,epstein}.

The fibre of $(J,D)$ is infinite-dimensional. Through cubic order, the three-mode amplitude
equations\cite{crosshohenberg,hoyle} depend on the reaction law only through the resonant quadratic
coupling $a$ and the cubic saturation coefficients $g$ and $h$; quartic and quintic terms do not
enter these coefficients, and the map to $(a,g,h)$ has rank three at each of 500 random reaction laws
(Fig.~\ref{fig:three}b; Extended Data Fig.~4). Reaction laws matched in $(a,g,h)$ therefore share
the same reduced equations. Simulations from stripe and hexagon initial conditions test how far this
carries to the full system. Three reaction laws of one target, with nonlinearity in the activator
equation only, in the inhibitor equation only or at random in both, reach the same pattern type with
amplitudes that differ by at most 4\% (Fig.~\ref{fig:three}c), and over six targets 88 of 90 runs end
in the predicted pattern type. The equivalence holds at leading order only: quintic terms shift the
amplitude by up to 14\%, and two reaction laws of one target also reach a hexagonal branch of larger
amplitude (Supplementary Note~11). Hexagonal and striped Turing patterns are both observed in
chemical\cite{ouyang} and biological\cite{kondo} systems; in this family the choice between them and
their amplitude are set, at leading order, by directions that linear characterization cannot
resolve.

\begin{figure}[t]
\centering
\includegraphics[width=\linewidth]{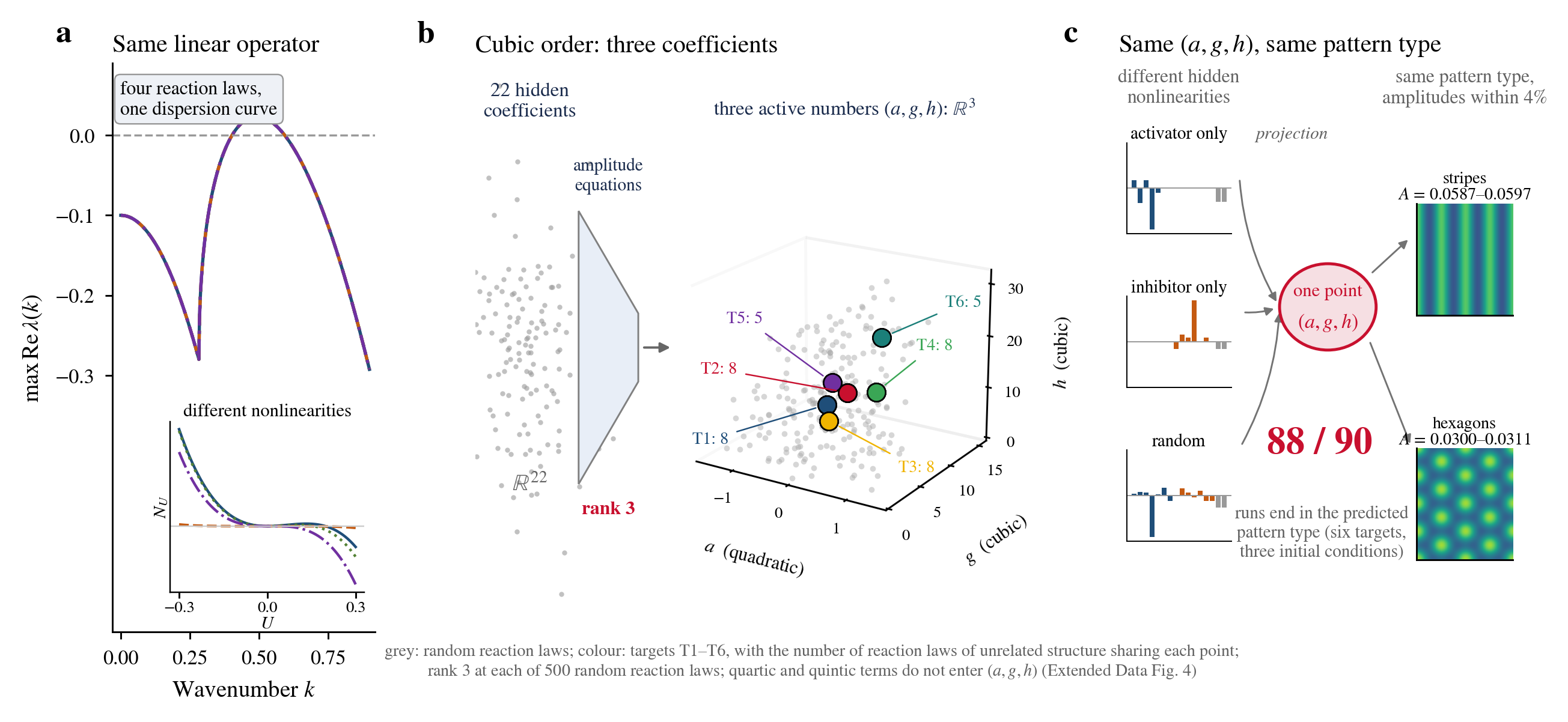}
\caption{\textbf{Reaction laws with a common linear operator.}
\textbf{a}, Four reaction laws of the family of Eq.~\eqref{eq:rd} have the same linear operator
$(J,D)$ and hence one dispersion curve; inset, their nonlinearities differ.
\textbf{b}, Through cubic order the amplitude equations depend on three coefficients. Grey, random
reaction laws with 22 nonlinear coefficients, mapped to the coefficients $(a,g,h)$ of the three-mode
amplitude equations; colour, six target points, each shared by five to eight reaction laws of
unrelated structure that were projected onto it.
\textbf{c}, Matched coefficients, same reduced equations. Three reaction laws of one target with
nonlinearity in the activator equation only, in the inhibitor equation only, or random in both
(bars, their sixteen coefficients) share one point $(a,g,h)$ and produce the same pattern types,
stripes from a stripe seed and hexagons from a hexagon seed, with amplitudes that differ by at most
4\%. Over the six targets, 88 of 90 runs from stripe, hexagon and noise initial conditions end in the
predicted pattern type; the two exceptions are hexagon-seeded runs that stay hexagonal where only
stripes are predicted to be stable.}
\label{fig:three}
\end{figure}

\section*{Designed oscillations and their limits}
For prescribed oscillation power and frequency $(I^*,\Omega^*)$, the Hopf response map first gives
the required changes in interfacial growth rate and linear frequency. We minimize the pointwise
relative kernel change over the moment-preserving subspace, so that the normalization, $\gamma_1$
and $\gamma_2$ of the reference are kept exactly. The resulting kernel initializes a joint solve of
the periodic kinetic state and two kernel coordinates, with the power and frequency imposed as
constraints; the solve moves the coordinates by at most 1.6\% (Methods). We then hold the kernel
fixed and run the kinetic equations for about 380 oscillation periods from the steady interface of
the reference kernel plus a small perturbation (Fig.~\ref{fig:four}a). Under the reference kernel
the perturbation decays. Under each designed kernel it grows, saturates and settles on a breathing
band whose frequency equals the target to $2\times10^{-10}$ and whose power equals it to 0.03\% at the
discretization used for the design (Fig.~\ref{fig:four}b). A fourth design reaches the second target
while preserving $\gamma_1$ through $\gamma_4$, and the four orbits are stable to perturbations
within the one-dimensional periodic cell (Extended Data Fig.~2). In the reaction--diffusion family
the amplitude equations are inverted instead: a prescribed pair of stripe and hexagon amplitudes is
reached in closed form to within a few per cent and, after one correction of the targets by the
measured offset, to within 1\% for the first pair (Extended Data Fig.~1).

We examine how additional moment constraints and larger target amplitudes restrict the designs.
Each further matched rate removes one hidden direction; the capacity stays $\Gamma_b=2$ from $M=2$ to
$M=10$, but the region reachable to first order at unit kernel budget shrinks by
a factor of two to three per rate, so that a fixed box of targets that lies within unit budget up to
$M=6$ lies outside it from $M=8$ (Fig.~\ref{fig:four}c--e); a budget below one guarantees a positive
kernel, and beyond it positivity has to be checked kernel by kernel. Raising the target power eightfold pushes the
kernel budget from 7\% to 46\%, and the linear design degrades gradually before the budget
reaches one (Extended Data Fig.~2). In the reaction--diffusion family the cubic amplitude equations
lose accuracy near the edge where stripes are lost, and a second hexagonal branch that they do not
describe appears there (Extended Data Fig.~1).

\begin{figure}[t]
\centering
\includegraphics[width=\linewidth]{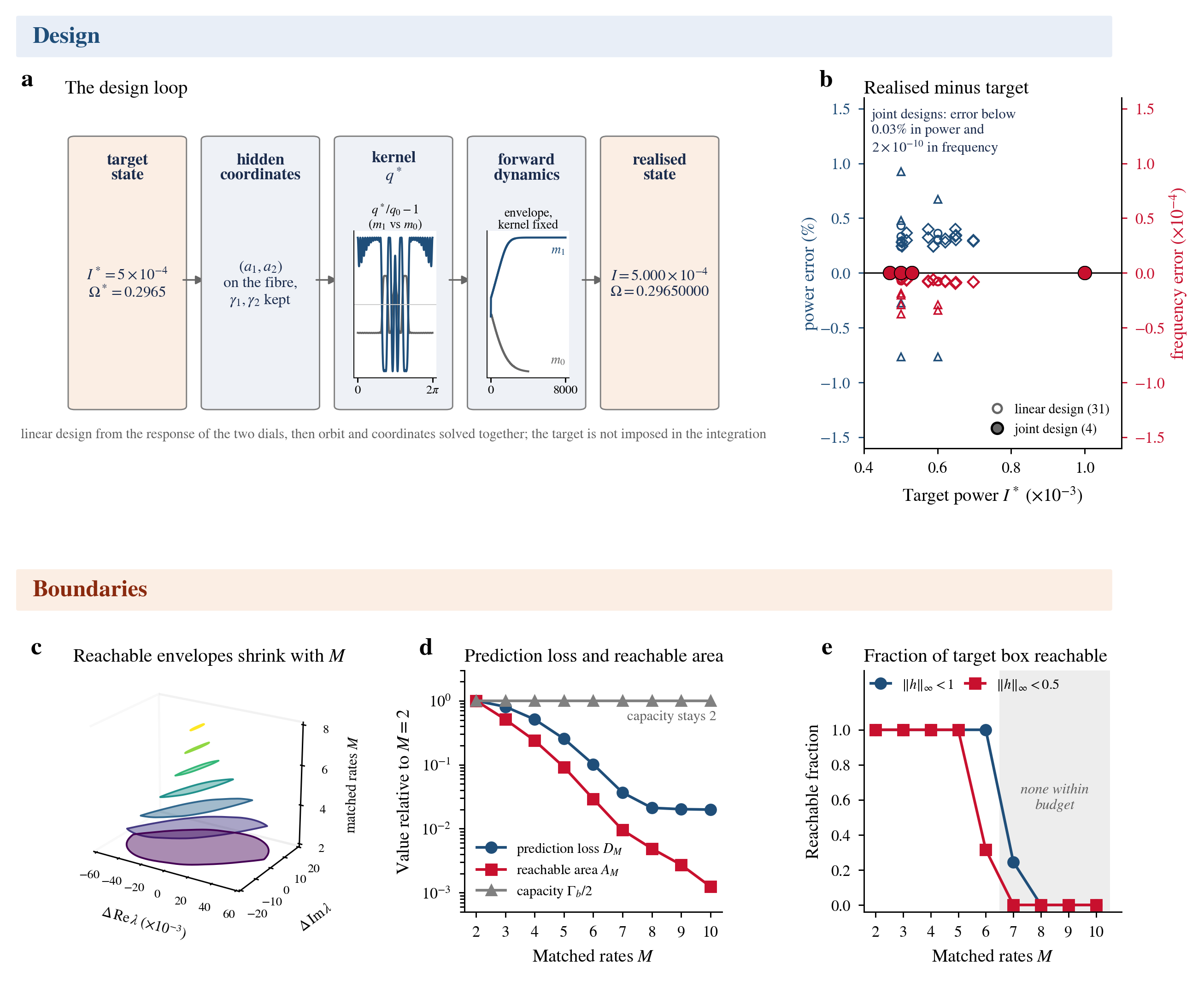}
\caption{\textbf{Designed oscillations and their limits.}
\textbf{a}, The design loop for the first target. A prescribed power and frequency are converted
into two hidden coordinates on the fibre, hence into a kernel $q^*$ with the same $\gamma_1$ and
$\gamma_2$ as the reference $m_0$ (inset, deviation of both from $q_0$); the kernel is then held
fixed in a forward integration of the kinetic equations in which the target is not imposed
(inset, envelope of the critical-mode amplitude under $m_0$ and $q^*$ from the same initial state),
and the realised power and frequency are read from the saturated state.
\textbf{b}, Realised minus target against target power: relative power error in per cent (blue) and
frequency error in units of $10^{-4}$ (red). Filled, the four joint designs after independent
integration; open, 31 linear designs tested by full periodic solves with the kernel fixed.
\textbf{c}, Regions of growth-rate and frequency responses reachable at unit kernel budget, stacked
by the number $M=2,\ldots,8$ of matched rates.
\textbf{d}, Prediction loss and reachable area relative to $M=2$; the capacity stays two.
\textbf{e}, Fraction of 400 targets in a fixed box of power and frequency reachable, to first order,
within kernel budgets $\|h\|_\infty$ below one (which guarantees positivity) and below one half.}
\label{fig:four}
\end{figure}

\section*{Conclusion}
For a coarse description given by matched angular moments, the matching constraints are linear in
the turning kernel, so the family of kernels that keeps them can be parametrized exactly. Within this
family, the rank of the restricted response gives the number of independent first-order output
changes, and the prediction loss gives the largest first-order change of the interfacial growth rate
that a pointwise budget allows, together with the leading error of any predictor that uses the
matched data alone. Near the Hopf point these quantities give an initial design, the joint solve of
state and kernel refines it, and a fixed-kernel integration confirms it: the power and frequency of
an interfacial oscillation are prescribed with the bulk rates unchanged. In the reaction--diffusion
family the cubic amplitude equations take the place of the response map and set pattern morphology
and amplitude, at leading order, with the whole linear operator unchanged. The construction requires
a constraint-preserving parametrization and a computable response map; programmable
motility\cite{palacci,frangipane,lavergne} and chemical pattern-forming systems\cite{ouyang} offer
experimental control of the corresponding microscopic rules. The resulting constructions separate
the responses that remain adjustable under a fixed description from those constrained by the
available budget or fixed by the retained dynamics, such as the decay of the matched angular moments
(Supplementary Note~6).

\FloatBarrier
\section*{Methods}
\paragraph{Kinetic model.}
For the one-dimensional band, the kinetic field of species $s$ satisfies
\[
 \partial_t F_s=-\partial_x(v_s\cos\theta\,F_s)+D_t\partial_x^2F_s
 +\alpha\int_0^{2\pi}q(\varphi)\,[F_s(x,\theta-\varphi)-F_s(x,\theta)]\,\dd\varphi,
 \qquad \rho_s=\int_0^{2\pi}F_s\,\frac{\dd\theta}{2\pi},
\]
with angular relaxation rates $\gamma_m=\alpha(1-\widehat q_m)$ and
$\widehat q_m=\int_0^{2\pi}q(\varphi)\cos m\varphi\,\dd\varphi$. The species speeds are
$v_s=\prod_u[1+0.7\tanh(\eta_{su}(\widetilde\rho_u-1)/0.7)]$ with
$\eta=\left(\begin{smallmatrix}-2&0.5\\-0.5&-2\end{smallmatrix}\right)$, where $\widetilde\rho_u$
is the density sensed through the radial kernel $K(r)=3(1-r)/\pi$ for $0\le r\le1$. The periodic
cell has length $L=40/3$, mean densities $(0.75,1.25)$ and $D_t=0.01$; all changes of turning law
keep these parameters fixed. The oscillation power $I$ is the density variance of the spatially
aligned orbit about its temporal mean.

\paragraph{Coarse-grained variables and interfacial response.}
In the scalar constant-speed hierarchy the leading difference between two kernels matched through
$M$ rates is
\begin{equation}
 \Delta\lambda(k)=\frac{2(-1)^{M+1}v^{2M+2}}{4^{M+1}\prod_{n=1}^{M}\gamma_n^2}\,
 \Delta(\gamma_{M+1}^{-1})\,k^{2M+2}+O(k^{2M+4}),
 \label{eq:bulkfirstdifference}
\end{equation}
which follows from the angular continued fraction\cite{risken} (Supplementary Note 1); the
coefficients of Fig.~\ref{fig:one}b are obtained by fitting the density branch of this hierarchy, computed
from the first 80 rates of each kernel, to a polynomial in $k^2$ on $k\le0.03$. For the two-species system the hierarchy becomes a matrix ladder and the
same order counting holds, and for the formal nonlinear long-wave closure\cite{levermore,maddu}
the coefficient of $\partial_T\rho_s$ at gradient order $2j$ depends only on
$\gamma_1,\ldots,\gamma_j$, with the first difference at order $2M+2$ given in closed form and
verified by an independent series computation through $M=4$ (Supplementary Note 2). These
asymptotic statements are not uniform approximations to a finite-width
interface\cite{peraud,zhaoyong}. The interfacial sensitivity
$W_m=\partial\,\mathrm{Re}\,\lambda/\partial\gamma_m$ was obtained by first-order perturbation of
the band's leading eigenvalue\cite{kato} including the change of the self-consistent band; at
64 angular modes its second-harmonic value $-10.6$ separates into $+0.46$ of direct damping and
$-11.0$ of shape feedback, and $|W_2|/|W_1|\simeq5$ at both 64 and 128 modes (Supplementary
Note 12). The kernels of Fig.~\ref{fig:one} share the normalization, $\gamma_1$ and $\gamma_2$ of
the base kernel $q_0$ to $10^{-18}$; a second witness pair whose kernels differ by a factor of 3.5
gives the same verdict at angular resolutions from 96 to 512 (Supplementary Note 13).

\paragraph{Active-matter simulations.}
Bands are computed pseudo-spectrally\cite{trefethen} in one Bloch cell, by time relaxation with
drift projection or by a bordered dense Newton solve\cite{govaerts} at fixed masses and phase.
Spectra of the travelling-wave linearisation\cite{sandstede,beyn} use block subspace iteration on
the exponentials of the linearised operator and of its adjoint. Periodic orbits are solved as
boundary-value problems in time by preconditioned Newton--Krylov iteration\cite{knoll} with
GMRES\cite{saad}; time stepping uses Lawson exponential Runge--Kutta integrators\cite{hochbruck},
and Floquet multipliers come from Arnoldi iteration on the period map. Particle simulations use
cloud-in-cell particle-mesh sensing\cite{hockney}. With orientation-independent speed and an even
kernel, angular reflection commutes with the one-dimensional evolution and the odd angular sector
contracts, so odd perturbations supply no additional unstable sector in one dimension
(Supplementary Note 7). The forward integrations of Fig.~\ref{fig:four}a,b use the same stepper with
1,024 steps per target period over 8,000 time units, started from the steady interface of the
reference kernel plus a small multiple of the critical eigenvector; frequency and power are read
from a six-harmonic fit over the last 16 periods.

\paragraph{Response capacity and prediction loss.}
Write the first-order response of the interfacial growth rate to a relative change $u$ of the
kernel as $\dd R[q_0u]=\int q_0\Psi u$; matching $M$ rates imposes $\int q_0uC=0$, and for
$\|u\|_\infty\le1$ the largest response is $D_M$ of Eq.~\eqref{eq:DM}. Any predictor using only
the matched rates has smallest worst-case error $\varepsilon D_M+O(\varepsilon^2)$ on kernels
$q_0(1+h)$ with $\|h\|_\infty\le\varepsilon$ (Supplementary Note 4). The values in
Fig.~\ref{fig:two}c use the response table at 128 angular modes truncated at harmonic $m=8$. For a
vector of outputs $\Phi$ the local response capacity is
$\Gamma_b=\operatorname{rank}(D\Phi|_{\ker D\mathcal C})$. The transect of Fig.~\ref{fig:two}a is
a line of kernels crossing the Hopf line of the linear map; full periodic solves at its five
oscillatory points return the power within 1.2\% of the linear response map and the frequency
within $10^{-5}$, with leading Floquet moduli between 0.934 and 0.994, and a local predictor built
on the Hopf normal form\cite{kuznetsov} reproduces full kinetic integrations to better than 1\% in
power at kernel budgets up to 8\% (Supplementary Note 13). The response surface of
Fig.~\ref{fig:one}a is this normal form evaluated on the linear response of the two hidden
directions; it reproduces the transect's linear predictions exactly.

\paragraph{Inverse design.}
The kernel family of Fig.~\ref{fig:four} is $q=q_0(1+h)$, $h=\sum_{j<33}c_j\cos j\varphi$, at
the critical Hopf reference, with the normalization, $\gamma_1$ and $\gamma_2$ of $q_0$ imposed
through the exact Bessel moments of $q_0\cos j\varphi$; the two hidden coordinates multiply the
minimum-budget direction of the linear design for the target and the direction of equal budget
perpendicular to it in response space. The joint problem solves the periodic orbit, represented by
temporal Fourier collocation on 17 nodes at $N=A=128$, together with the two coordinates, with the
phase condition, the target power and the target frequency as additional equations, by Newton
iteration with a Schur complement on the coordinates (Supplementary Note 9); all seven solves
converged in three or four iterations to residual norms below $4\times10^{-11}$ with the matched
moments held to $10^{-18}$. The constraint sweep of Fig.~\ref{fig:four}c--e constrains the hidden
subspace of the 33-harmonic family to preserve $\gamma_1,\ldots,\gamma_M$ for $M=2$ to 10 and
computes the capacity, the region reachable at unit budget by linear programming of its support
function, the growth half-range $D_M^{(33)}$ and the smallest budget for 400 random targets in a
fixed box; the sweep was repeated with 9 to 33 harmonics (Extended Data Fig.~3; Supplementary
Note 8). The 31 linear designs of Fig.~\ref{fig:four}b were tested by full periodic solves
with the kernel fixed, eight of them for targets drawn at random from a recorded seed.

\paragraph{Reaction--diffusion model.}
The family of Fig.~\ref{fig:three} has $J=\left(\begin{smallmatrix}0.8&-1\\1&-1\end{smallmatrix}\right)$
and $D=\operatorname{diag}(1,3.5)$, so that the uniform state is Turing-unstable at $k_c=0.5071$
with growth rate $\sigma=0.02278$, and $\max_k|\Delta\lambda|\le7\times10^{-13}$ across the
family. Near threshold the three-mode amplitude equations read the nonlinearity through
$(a,g,h)$, fixed by the symmetric bilinear and trilinear forms of its quadratic and cubic parts at
the critical eigenvector (Supplementary Note 10); the Jacobian of $(a,g,h)$ with respect to the
hidden coefficients was evaluated by central differences at 200 random reaction laws of four- and
sixteen-coefficient families and 500 of a 22-coefficient family. Twin reaction laws were drawn at random
in the sixteen-coefficient space and projected onto a target $(a,g,h)$ by minimum-norm
Gauss--Newton steps with every coefficient kept below two in magnitude. The four-coefficient family
$N_\theta=(\eta_2U^2+\eta_{11}UV-\beta_3U^3-\beta_5U^5,\,0)$ is used for the phase plane and the
amplitude designs of Extended Data Fig.~1; there $\Gamma_b=2$, with morphology set through $a$ and
amplitude through $g$, and the quintic coefficient is silent (Supplementary Note 11).

\paragraph{Numerical simulations.}
Equation~\eqref{eq:rd} is integrated pseudo-spectrally with fourth-order exponential time
differencing\cite{hochbruck} on $L_x=8\pi/k_c$, $L_y=16\pi/(\sqrt3k_c)$ with $64\times74$ modes,
two-thirds dealiasing and $\Delta t=0.5$; morphology is read from the number of Fourier pairs near
$k_c$ above one fifth of the strongest and polarity from the skewness of $U$. Every kernel of two
targets was rerun with two further seeds of the initial perturbation, which changed no pattern
from stripe or hexagon seeds; a run counts as the predicted pattern type when its final morphology is
the one predicted stable for the target and its seed. Discretization parameters,
residuals and the complete numerical record behind every quoted figure are given in Supplementary
Note 13.

\paragraph{Data availability.}
The kernels, periodic fields, forward trajectories, result tables and the scripts that generated
the figures are available from the authors upon reasonable request.

\clearpage
\section*{Extended Data}
\setcounter{figure}{0}
\renewcommand{\figurename}{Extended Data Fig.}
\renewcommand{\theHfigure}{ED\arabic{figure}}

\begin{figure}[h]
\centering
\includegraphics[width=.95\linewidth]{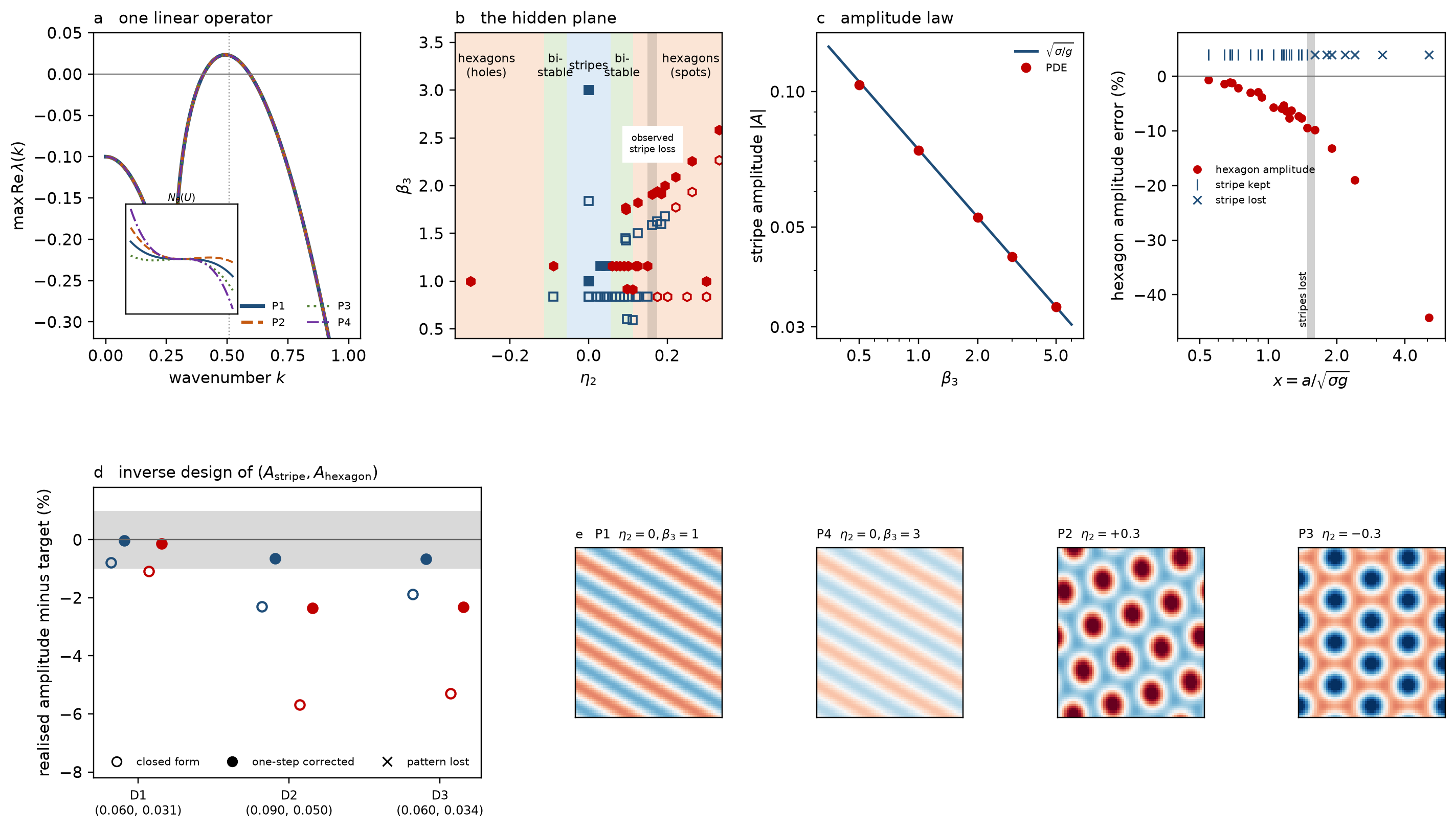}
\caption{\textbf{Hidden plane, amplitude law and inverse design in the
reaction--diffusion family.}
\textbf{a}, Four members P1--P4 share $J$ and $D$, hence the whole dispersion relation; the inset
shows their different nonlinearities.
\textbf{b}, Response-active plane of the quadratic coupling $\eta_2$ and the cubic saturation
$\beta_3$: shaded regions are amplitude-equation predictions, symbols simulated outcomes from
stripe-seeded (open) and hexagon- or noise-seeded (filled) initial conditions, and the grey band the
observed loss of stripes.
\textbf{c}, Stripe amplitude against $\beta_3$, theory and simulation (left); error of the hexagon
amplitude against $x=a/\sqrt{\sigma g}$, with stripes kept (bars) or lost (crosses) (right).
\textbf{d}, Inverse design of the amplitude pair $(A_{\rm stripe},A_{\rm hexagon})$: realised minus
target, from the closed-form solution and after one correction; the grey band is $\pm1\%$.
\textbf{e}, Final $U$ fields for P1--P4.}
\end{figure}
\begin{figure}[h]
\centering
\includegraphics[width=.95\linewidth]{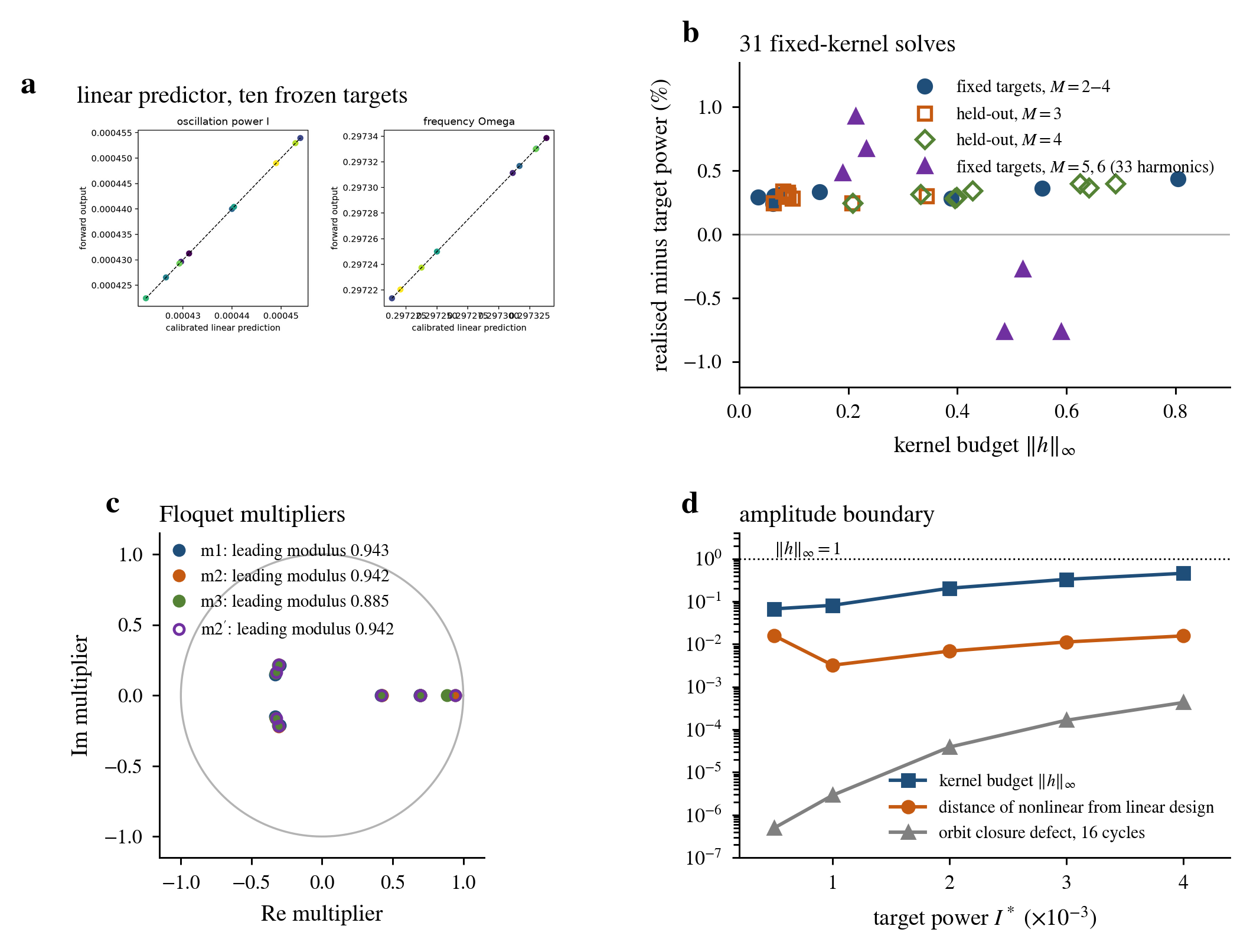}
\caption{\textbf{Validation and limits of the kinetic designs.}
\textbf{a}, Calibrated linear predictor against forward output for ten targets generated from a
recorded seed and frozen before solving, at $N=96$ and $A=256$.
\textbf{b}, Thirty-one full periodic kinetic solves with the kernel fixed: realised minus target
power against kernel budget, for fixed targets in the nine-harmonic family at $M=2$--$4$, held-out
targets at $M=3$ and 4, and fixed targets in the 33-harmonic family at $M=5$ and 6.
\textbf{c}, Eight leading Floquet multipliers of the four designed orbits of Fig.~4 (512 time
steps per period).
\textbf{d}, Kernel budget, distance of the nonlinear from the linear hidden coordinates, and
orbit-closure defect after sixteen cycles of forward integration, as the target power is raised at
fixed frequency; the dotted line is $\|h\|_\infty=1$, below which positivity is guaranteed.}
\end{figure}
\begin{figure}[h]
\centering
\includegraphics[width=\linewidth]{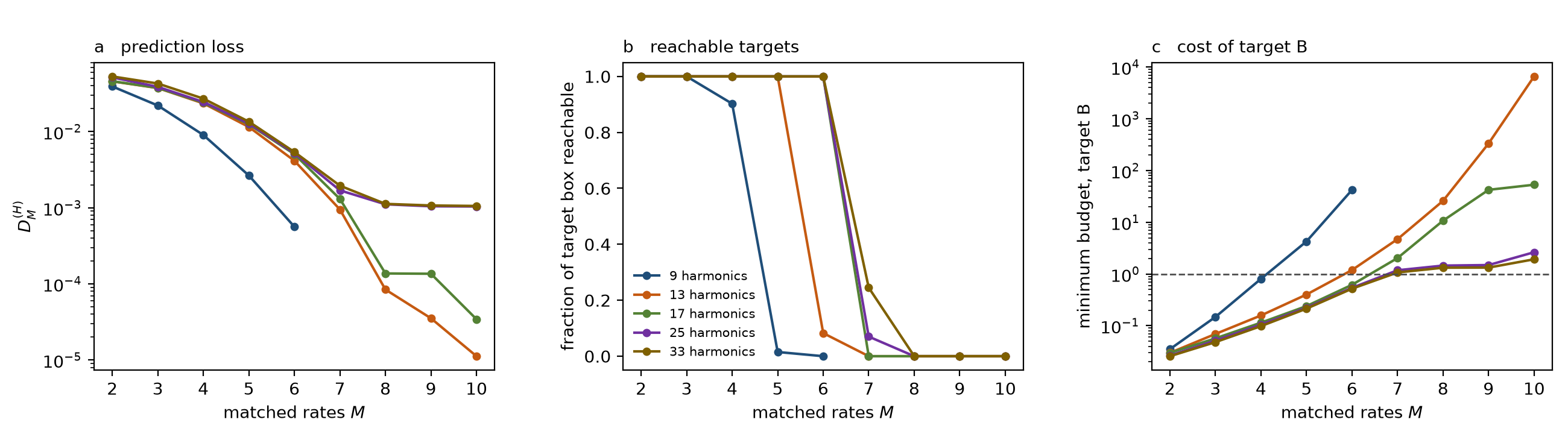}
\caption{\textbf{Truncation test of the constraint sweep.}
Kernel families with $H=9$, 13, 17, 25 and 33 cosine harmonics.
\textbf{a}, Restricted prediction loss $D_M^{(H)}$.
\textbf{b}, Fraction of the target box reachable within $\|h\|_\infty<1$.
\textbf{c}, Minimum budget for one fixed target; the dashed line is $\|h\|_\infty=1$.}
\end{figure}
\begin{figure}[h]
\centering
\includegraphics[width=\linewidth]{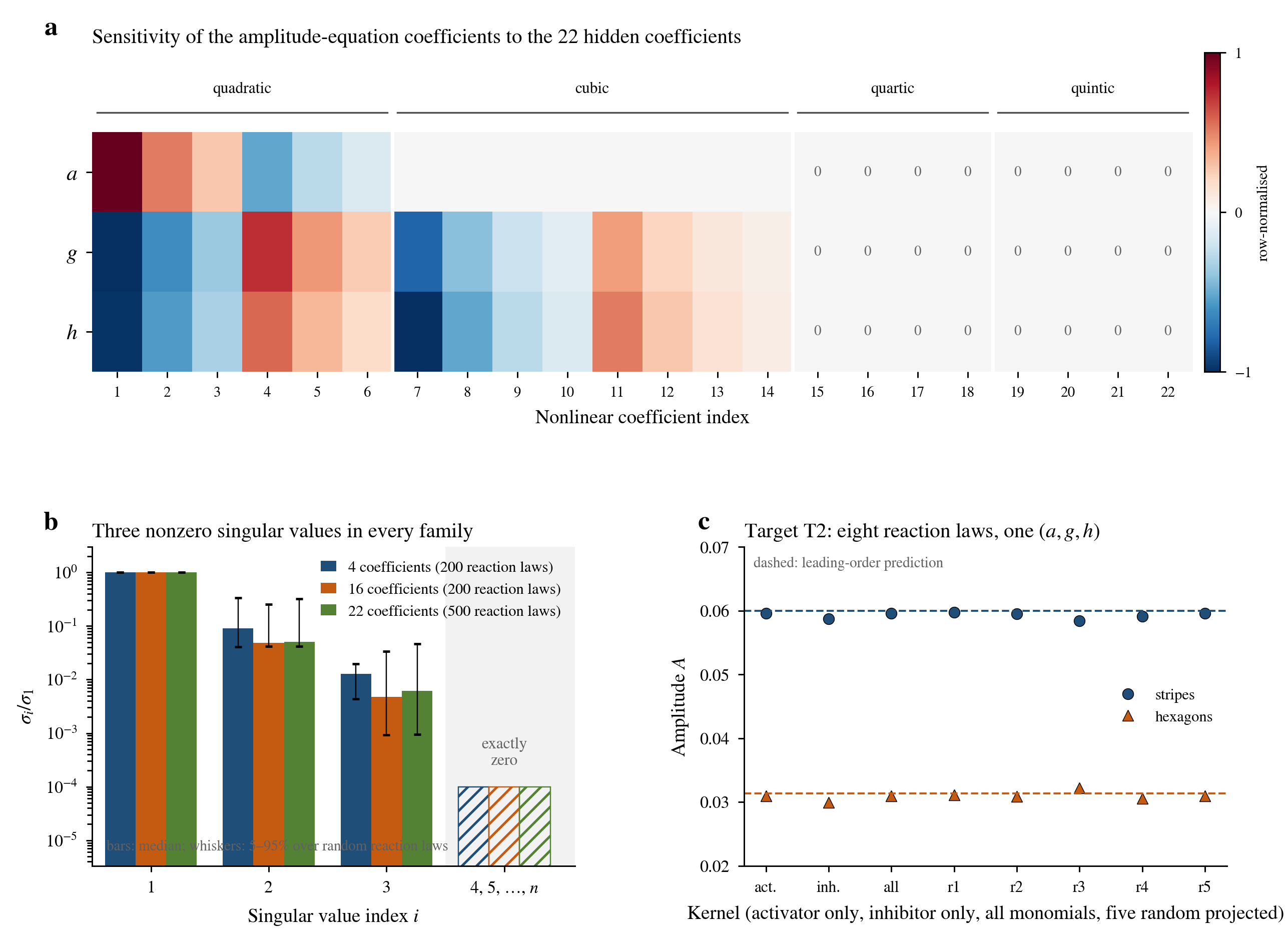}
\caption{\textbf{Evidence behind the three-dimensional active space.}
\textbf{a}, Jacobian of the amplitude-equation coefficients $(a,g,h)$ with respect to 22 hidden
nonlinear coefficients at a representative reaction law (rows normalised); the quartic and quintic
columns vanish identically.
\textbf{b}, Singular values of this Jacobian over random reaction laws of the 4-, 16- and 22-coefficient
families (bars, median; whiskers, 5--95\%): three are nonzero at every reaction law and the rest are
exactly zero.
\textbf{c}, The eight reaction laws of target T2 (activator-only, inhibitor-only, all-monomial and five
random projected nonlinearities) give stripes of amplitude 0.0584--0.0597 and hexagons of
0.0300--0.0322 against leading-order predictions of 0.0600 and 0.0314.}
\end{figure}
\begin{figure}[h]
\centering
\includegraphics[width=.95\linewidth]{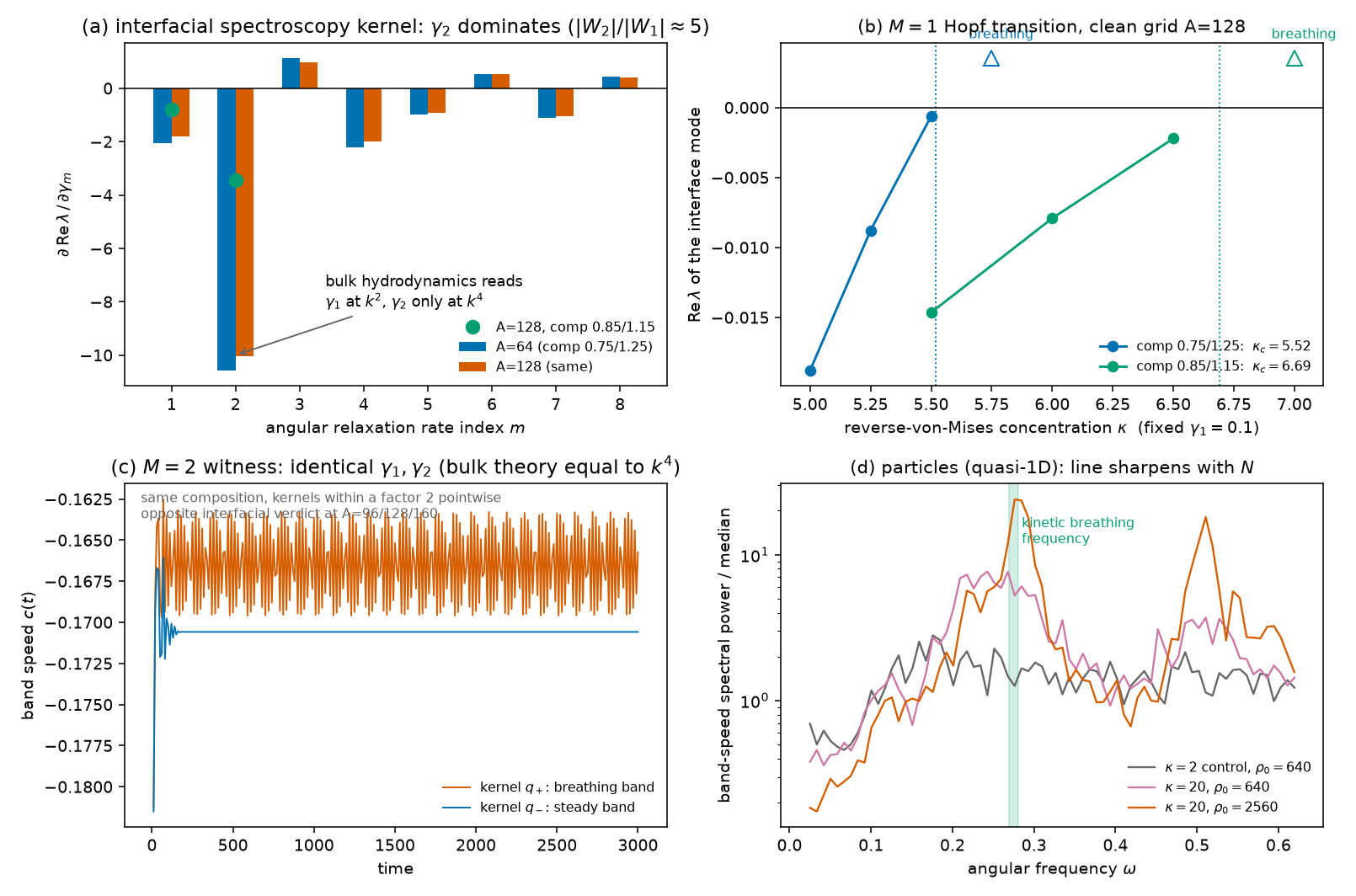}
\caption{\textbf{Interfacial sensitivity, Hopf transition and particle
line.}
\textbf{a}, Sensitivity $W_m=\partial\,\mathrm{Re}\,\lambda/\partial\gamma_m$ of the leading
eigenvalue of the travelling band to each angular rate, at two angular resolutions and two
compositions; the second rate dominates.
\textbf{b}, Growth rate of the interface mode against the concentration of the reverse von Mises
kernel at fixed $\gamma_1$, for two compositions.
\textbf{c}, Band speed against time for a second matched pair whose kernels differ by a factor of
3.5.
\textbf{d}, Band-speed spectral line of quasi-one-dimensional particle simulations at two particle
densities and a control, against the kinetic breathing frequency.}
\end{figure}
\clearpage
\section*{Supplementary Information}
\setcounter{figure}{0}\setcounter{table}{0}\setcounter{equation}{0}
\renewcommand{\figurename}{Supplementary Fig.}\renewcommand{\tablename}{Supplementary Table}
\renewcommand{\thefigure}{\arabic{figure}}\renewcommand{\theHfigure}{S\arabic{figure}}
\renewcommand{\thetable}{\arabic{table}}\renewcommand{\theHtable}{S\arabic{table}}
\renewcommand{\theequation}{S\arabic{equation}}\renewcommand{\theHequation}{S\arabic{equation}}
\renewcommand{\thetheorem}{S\arabic{theorem}}
\renewcommand{\thesubsection}{\thesuppnote.\arabic{subsection}}
\renewcommand{\thesubsubsection}{\thesubsection.\arabic{subsubsection}}
\setcounter{tocdepth}{1}\renewcommand{\contentsname}{Contents}

\noindent This part contains the proofs, derivations and data on which the statements of the main
text rest. Supplementary Notes 1--4 set out the mathematical framework, Notes 5--9 the analysis of the
active mixture, Notes 10 and 11 the reaction--diffusion fibre, and Notes 12 and 13 the data and the
numerical record behind the main text and the Methods. Equations and statements are numbered S1, S2,
\dots\ throughout.

\tableofcontents
\clearpage

\suppnote{Finite-order bulk blindness}

\begin{theorem}[Bulk dispersion blindness of order $2M+2$]\label{thm:bulk}
Assume a uniform angular gap, $\inf_m\gamma_m>0$ (Lemma~\ref{lem:gap} gives it for every kernel used
here). The uniform-state density branch $\lambda(k)=\sum_{j\ge1}l_jk^{2j}$ obeys the matrix continued fraction~\cite{risken} $\lambda+2u_1=0$, $u_n=(vk/2)^2/(\lambda+\gamma_n+u_{n+1})$, and $l_j$ depends only on $\gamma_1,\dots,\gamma_j$. Two generators matching $\gamma_1,\dots,\gamma_M$ therefore have identical bulk density dispersion through $k^{2M}$, with first possible difference at $k^{2M+2}$.
\end{theorem}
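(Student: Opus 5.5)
The plan has three steps: derive the continued fraction from the Fourier-transformed kinetic equation, solve it order by order in $k^2$, and show by induction that the $k^{2j}$ coefficient reaches no deeper than level $j$.

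\textbf{Step 1: the continued fraction.} Take the scalar constant-speed hierarchy and expand $F(x,\theta,t)=e^{ikx+\lambda t}\sum_m f_m e^{im\theta}$. Transport $-v\cos\theta\,\partial_x$ couples $f_m$ to $f_{m\pm1}$ with coefficient $-ivk/2$. The even kernel acts diagonally, as $-\gamma_m f_m$ with $\gamma_{-m}=\gamma_m$ and $\gamma_0=0$. By the reflection symmetry $f_{-m}=f_m$ on the density branch, so it suffices to work with $m\ge0$.

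Define the ratios $r_n=f_n/f_{n-1}$. The equation at level $n$ is
\[
(\lambda+\gamma_n)f_n=-\tfrac{ivk}{2}(f_{n-1}+f_{n+1}),
\]
which gives $r_n=-\tfrac{ivk}{2}/(\lambda+\gamma_n+\tfrac{ivk}{2}r_{n+1})$. Setting $u_n=\tfrac{ivk}{2}r_n$ yields $u_n=(vk/2)^2/(\lambda+\gamma_n+u_{n+1})$. The density equation $\lambda f_0=-\tfrac{ivk}{2}(f_1+f_{-1})$ gives $\lambda+2u_1=0$.

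The uniform gap $\inf_m\gamma_m>0$ (Lemma~\ref{lem:gap}) lets me treat the tail as a contraction. For $|k|$ and $|\lambda|$ small, the map $u_{n+1}\mapsto u_n$ sends a disc of radius $O(k^2)$ into itself uniformly in $n$. Hence the infinite continued fraction converges to an analytic function $G(\lambda,k^2)$ with $G=O(k^2)$. The implicit function theorem applied to $\lambda+2G(\lambda,k^2)=0$ at $(0,0)$, where $\partial_\lambda=1$, then gives a unique analytic branch $\lambda(k)=\sum_{j\ge1}l_jk^{2j}$.

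\textbf{Step 2: depth counting.} The key claim, proved by induction, is that $u_n=O(k^2)$ and, more precisely, the expansion of $u_n$ in powers of $k^2$ (with $\lambda$ itself $O(k^2)$) has its $k^{2p}$ coefficient depending only on $\gamma_n,\dots,\gamma_{n+p-1}$ and $l_1,\dots,l_{p-1}$.

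The mechanism is the factor $(vk/2)^2$ in front of each level. Write
\[
u_n=\frac{(vk/2)^2}{\gamma_n}\Big(1+\frac{\lambda+u_{n+1}}{\gamma_n}\Big)^{-1}.
\]
Since $\lambda+u_{n+1}=O(k^2)$, expanding the geometric series shows that the $k^{2p}$ coefficient of $u_n$ involves coefficients of $\lambda+u_{n+1}$ only up to order $k^{2(p-1)}$. By the inductive hypothesis these reach $\gamma_{n+1},\dots,\gamma_{n+p-1}$ and $l_1,\dots,l_{p-2}$.

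Formally the induction is on $p$ jointly for all $n$. It is legitimate because every truncation error is a convergent analytic remainder, with the uniform gap keeping all denominators bounded away from zero.

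\textbf{Step 3: conclusion.} From $\lambda=-2u_1$ the coefficient $l_j$ is the $k^{2j}$ coefficient of $-2u_1$. By Step 2 this depends on $\gamma_1,\dots,\gamma_j$ and $l_1,\dots,l_{j-1}$. By induction on $j$ (starting from $l_1=-v^2/(2\gamma_1)$), $l_j$ depends only on $\gamma_1,\dots,\gamma_j$.

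If two generators share $\gamma_1,\dots,\gamma_M$, then $l_1,\dots,l_M$ coincide, so the dispersions agree through $k^{2M}$. The first possible difference is $l_{M+1}$, at $k^{2M+2}$. To see that a difference generically occurs there, I would isolate the $\gamma_{M+1}$-dependence of $l_{M+1}$. It enters only through the deepest term, the product $\prod_{n\le M}(v/2)^2/\gamma_n^2\cdot(v/2)^2/\gamma_{M+1}$ with sign $(-1)^{M}$, which is consistent with Eq.~\eqref{eq:bulkfirstdifference}.

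\textbf{Main obstacle.} I expect the main difficulty to be making the formal depth counting rigorous for an infinite continued fraction. This requires showing that the truncated fractions converge in a space of analytic functions of $k^2$, uniformly in depth, so that coefficients can be read off term by term. The uniform gap is exactly what supplies the contraction. I would first try a fixed-point argument on sequences $(u_n)_{n\ge1}$ in the weighted sup-norm $\sup_n|u_n|/k^2$ on a small complex disc in $k$.
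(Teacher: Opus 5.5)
Your proposal is correct and follows essentially the paper's route: the same continued fraction, a contraction on the whole sequence $(u_n)_{n\ge1}$ uniform in $n$ followed by the implicit function theorem (the paper's Lemma~\ref{lem:analytic}), and an induction on $p$ jointly in $n$ through the geometric-series expansion (its Ladder lemma). There are two harmless slips. First, the $\lambda$ part of $\lambda+u_{n+1}$ contributes $l_{p-1}$ directly, so your stated inductive claim with $l_1,\dots,l_{p-1}$ is the right one, not the intermediate $l_1,\dots,l_{p-2}$. Second, the $\gamma_{M+1}$-term you isolate is the coefficient in $u_1$, and it acquires the factor $-2$ from $\lambda=-2u_1$, which gives the sign $(-1)^{M+1}$ of Eq.~\eqref{eq:bulkfirstdifference}.
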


\subsection{The scalar ladder}
\label{app:bulk}

\subsubsection{The angular ladder}
Linearise the kinetic equation about the uniform isotropic state $f=\rho_0/2\pi$ and Fourier
transform in space; isotropy makes the branch a function of $|k|$, so we take $k$ along $x$.
Writing $f_m=\int f(\varphi)e^{-im\varphi}\,d\varphi$ and using the even sector $f_{-m}=f_m$, the
free-streaming term $v\cos\varphi\,\partial_x$ couples neighbouring harmonics while the turning
operator is diagonal with eigenvalues $-\gamma_m$ and $\gamma_0=0$ (mass conservation):
\begin{equation}\label{eq:ladder}
 \lambda f_0=-ivk\,f_1,\qquad
 (\lambda+\gamma_m)f_m=-\tfrac{ivk}{2}\,(f_{m-1}+f_{m+1}),\quad m\ge1 .
\end{equation}
Set
\begin{equation}\label{eq:udef}
 s:=\Big(\frac{vk}{2}\Big)^{2},\qquad u_m:=\frac{ivk}{2}\,\frac{f_m}{f_{m-1}}\quad(m\ge1).
\end{equation}
Dividing the $m$-th equation of~\eqref{eq:ladder} by $f_{m-1}$ turns the three-term recursion into
the continued fraction quoted in the main text,
\begin{equation}\label{eq:cf}
 u_m=\frac{s}{\lambda+\gamma_m+u_{m+1}},\qquad \lambda+2u_1=0 .
\end{equation}

\begin{lemma}[Analyticity]\label{lem:analytic}
Assume $\gamma_m\ge\gamma_{\min}>0$ for all $m\ge1$. There are $\rho,\delta>0$ such that for
$|s|<\rho$ and $|\lambda|<\delta$ the system~\eqref{eq:cf} has a unique solution with
$u_m=O(s)$ uniformly in $m$, the truncations obtained by setting $u_{N+1}=0$ converge
geometrically to it, and $u_1$ is analytic in $(\lambda,s)$ near the origin. The equation
$\lambda+2u_1(\lambda,s)=0$ then has a unique analytic solution $\lambda(s)$ with $\lambda(0)=0$.
\end{lemma}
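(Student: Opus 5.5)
The plan is to view \eqref{eq:cf} as a fixed-point problem on a sequence space and apply the contraction mapping principle with analytic dependence on parameters. Let $X$ be the Banach space of bounded sequences $u=(u_m)_{m\ge1}$ with the sup norm. Define
\[
 T(u)_m=\frac{s}{\lambda+\gamma_m+u_{m+1}} .
\]
Choose $\delta\le\gamma_{\min}/4$ and restrict to the ball $\|u\|_\infty\le r$ with $r\le\gamma_{\min}/4$. On this ball every denominator satisfies
\[
 |\lambda+\gamma_m+u_{m+1}|\ge\gamma_m-\delta-r\ge\gamma_{\min}/2 .
\]
It follows that $\|T(u)\|_\infty\le 2|s|/\gamma_{\min}$. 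This stays at most $r$ once $|s|\le\rho:=r\gamma_{\min}/2$, so $T$ maps the ball into itself. Each output $T(u)_m$ is a scalar function of the single coordinate $u_{m+1}$, with derivative $-s/(\lambda+\gamma_m+u_{m+1})^2$. Hence the Lipschitz constant of $T$ is at most $4|s|/\gamma_{\min}^2\le 2r/\gamma_{\min}\le 1/2$, uniformly in $m$. Banach's fixed-point theorem then gives a unique solution in the ball. It satisfies $|u_m|\le 2|s|/\gamma_{\min}$, which is the claimed $u_m=O(s)$ uniformly in $m$.

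Convergence of the truncations comes from the same contraction bound. Setting $u_{N+1}=0$ and solving the finite recursion downward is the same as applying $T$ to sequences that vanish beyond $N$. Comparing the truncation with the fixed point coordinate by coordinate, the error at level $m$ is at most $(1/2)$ times the error at level $m+1$. Descending from level $N+1$, where the error is at most $r$, gives $|u_1^{(N)}-u_1|\le r\,2^{-N}$. So the truncations converge geometrically.

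For analyticity I use a cleaner route than the implicit function theorem in $X$. Each truncation $u_1^{(N)}(\lambda,s)$ is a finite composition of analytic maps with denominators bounded away from zero, so it is analytic on the polydisc $|s|<\rho$, $|\lambda|<\delta$. The truncations are uniformly bounded by $r$ and converge uniformly there at the geometric rate. By Weierstrass's theorem, the limit $u_1$ is analytic in $(\lambda,s)$. The same argument shows that every $u_m$ is analytic. This step needs the bounds to be uniform in $N$ and $m$, and that uniformity is exactly where the hypothesis $\inf_m\gamma_m>0$ enters. I expect this to be the main point requiring care, because without the gap the denominators can approach zero at high $m$ and the contraction fails.

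Finally, set $G(\lambda,s)=\lambda+2u_1(\lambda,s)$. At $s=0$ the fixed point is $u\equiv0$, so $G(0,0)=0$. For the $\lambda$-derivative, the bound $|u_1|\le 2|s|/\gamma_{\min}$ together with Cauchy estimates in $\lambda$ on the disc of radius $\delta$ gives
\[
 |\partial_\lambda u_1|\le \frac{2|s|}{\gamma_{\min}\,\delta}\quad\text{for }|\lambda|<\delta/2 .
\]
Thus $\partial_\lambda G(0,0)=1\neq0$. The analytic implicit function theorem then yields a unique analytic $\lambda(s)$ with $\lambda(0)=0$, possibly after shrinking $\rho$. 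Uniqueness near the origin follows from the same theorem.
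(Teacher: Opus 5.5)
Your proposal is correct and follows the paper's proof essentially step for step. Both use a sup-norm contraction for $T$ with $|\lambda|\le\gamma_{\min}/4$, Banach's theorem for existence, uniqueness and the $O(s)$ bound, analyticity of $u_1$ as the uniform limit of the rational truncations, and the implicit function theorem with $\partial_\lambda(\lambda+2u_1)=1+O(s)$.

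Two small differences from the paper:
\begin{itemize}
\item Your ball of radius $\gamma_{\min}/4$, rather than the paper's $\gamma_{\min}/2$, only improves the constants.
\item Your level-by-level comparison of the truncation with the fixed point is an explicit version of the paper's appeal to the Banach rate, since the level-one truncation is exactly $T^N(0)_1$.
\end{itemize}

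The one slip is cosmetic. The Cauchy estimate on $|\lambda|<\delta/2$ gives $|\partial_\lambda u_1|\le 4|s|/(\gamma_{\min}\delta)$, not $2|s|/(\gamma_{\min}\delta)$. This does not affect the conclusion.
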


\begin{proof}
On the ball $\mathcal B=\{(u_m)_{m\ge1}:\sup_m|u_m|\le\tfrac12\gamma_{\min}\}$ with the supremum
norm, the map $T[(u_m)]_m=s/(\lambda+\gamma_m+u_{m+1})$ satisfies
$|T[u]_m|\le|s|/(\gamma_{\min}-|\lambda|-\tfrac12\gamma_{\min})$, which is $\le\tfrac12\gamma_{\min}$
once $|\lambda|\le\tfrac14\gamma_{\min}$ and $|s|\le\tfrac18\gamma_{\min}^2$; and
$\|T[u]-T[\tilde u]\|\le 16|s|\gamma_{\min}^{-2}\|u-\tilde u\|$, a contraction for
$|s|<\gamma_{\min}^{2}/16$. Banach's fixed point theorem gives existence, uniqueness and the
geometric rate; the bound $|u_m|\le 4|s|/\gamma_{\min}$ follows by one further application of $T$,
so $u_m=O(s)$ uniformly. Each truncation $u_1^{(N)}$ is a rational function of $(\lambda,s)$ with
non-vanishing denominator on the polydisc, hence analytic, and the convergence is uniform there,
so $u_1$ is analytic. Finally $\partial_\lambda[\lambda+2u_1]=1+O(s)\ne0$ near the origin and
$\lambda+2u_1=0$ holds at $(\lambda,s)=(0,0)$, so the implicit function theorem applies.
\end{proof}

\subsubsection{The ladder lemma}
Write the branch and the ladder as convergent power series in $s$,
\begin{equation}
 \lambda(s)=\sum_{j\ge1}L_j\,s^{j},\qquad u_n(s)=\sum_{p\ge1}c_{n,p}\,s^{p}\quad(n\ge1),
\end{equation}
so that $L_j=-2c_{1,j}$ by~\eqref{eq:cf}. In terms of the wavenumber,
$\lambda(k)=\sum_j l_jk^{2j}$ with $l_j=(v^2/4)^{j}L_j$.

\begin{lemma}[Ladder lemma]\label{lem:ladder}
For all $n\ge1$ and $p\ge1$:
\begin{enumerate}
\item[(i)] $c_{n,p}$ is a polynomial in $\gamma_n^{-1},\dots,\gamma_{n+p-1}^{-1}$ and
 $L_1,\dots,L_{p-1}$;
\item[(ii)] $c_{n,p}$ has degree exactly one in $\gamma_{n+p-1}^{-1}$, with
\begin{equation}\label{eq:slope}
 \frac{\partial c_{n,p}}{\partial(\gamma_{n+p-1}^{-1})}=(-1)^{p-1}\prod_{i=n}^{n+p-2}\gamma_i^{-2},
\end{equation}
the empty product being $1$ when $p=1$;
\item[(iii)] $L_j$ depends only on $\gamma_1,\dots,\gamma_j$.
\end{enumerate}
\end{lemma}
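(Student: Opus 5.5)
The plan is induction on $p$, run simultaneously for all $n$, by matching coefficients of $s^p$ in the continued fraction~\eqref{eq:cf}. Lemma~\ref{lem:analytic} makes every step legitimate: $u_n$ is analytic in $s$ uniformly in $n$, $\lambda(s)=\sum_{j\ge1}L_js^j$ is analytic with $\lambda(0)=0$, and $u_{n+1}=O(s)$. Write~\eqref{eq:cf} in the form
\[
 u_n=\frac{s}{\gamma_n}\Big(1+\frac{\lambda+u_{n+1}}{\gamma_n}\Big)^{-1}
 =\frac{s}{\gamma_n}\sum_{r\ge0}(-1)^r\gamma_n^{-r}(\lambda+u_{n+1})^r .
\]
The geometric series converges near $s=0$, because $\lambda+u_{n+1}=O(s)$ and $\gamma_n\ge\gamma_{\min}$.

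Extracting the coefficient of $s^p$ gives
\[
 c_{n,p}=\sum_{r\ge0}(-1)^r\gamma_n^{-1-r}\,[s^{p-1}](\lambda+u_{n+1})^r .
\]
Since $\lambda+u_{n+1}$ starts at order $s$, only $r\le p-1$ contributes. The coefficient $[s^{p-1}](\lambda+u_{n+1})^r$ is a polynomial in $L_j+c_{n+1,j}$ with $j\le p-1$.

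The base case $p=1$ is $c_{n,1}=\gamma_n^{-1}$, which gives (i) and (ii) with the empty product.

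\textbf{Induction for (i).} For the step, assume (i) and (ii) hold for all $n$ at every order $p'<p$. Then each $c_{n+1,j}$ with $j\le p-1$ is a polynomial in $\gamma_{n+1}^{-1},\dots,\gamma_{n+j}^{-1}$ and $L_1,\dots,L_{j-1}$. The largest index appearing is $n+p-1$, and the $L$'s used are $L_1,\dots,L_{p-1}$. Together with the powers of $\gamma_n^{-1}$, this proves (i).

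\textbf{Induction for (ii).} The variable $\gamma_{n+p-1}^{-1}$ enters only through $c_{n+1,p-1}$, because every other $c_{n+1,j}$ with $j<p-1$ involves strictly smaller indices. It must enter at total order $s^{p-1}$. Within $(\lambda+u_{n+1})^r$, the factor $c_{n+1,p-1}s^{p-1}$ already saturates the order, so it can appear only with $r=1$ and only linearly. The surviving term is $-\gamma_n^{-2}c_{n+1,p-1}$. Hence
\[
 \frac{\partial c_{n,p}}{\partial\gamma_{n+p-1}^{-1}}=-\gamma_n^{-2}\,\frac{\partial c_{n+1,p-1}}{\partial\gamma_{n+p-1}^{-1}},
\]
and the induction hypothesis gives~\eqref{eq:slope} with degree exactly one.

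One point needs care. The $L_j$ are themselves functions of the $\gamma$'s, so the partial derivative in (ii) is to be read with $L_1,\dots,L_{p-1}$ treated as independent symbols. This is consistent, because part (iii), proved next, shows those $L$'s do not involve $\gamma_{n+p-1}$ when $n\ge1$.

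\textbf{Part (iii).} This follows by a second induction on $j$. The relation $L_j=-2c_{1,j}$ and (i) express $L_j$ through $\gamma_1,\dots,\gamma_j$ and $L_1,\dots,L_{j-1}$. By the induction hypothesis those earlier $L$'s depend only on $\gamma_1,\dots,\gamma_{j-1}$, so $L_j$ depends only on $\gamma_1,\dots,\gamma_j$.

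\textbf{Main obstacle.} The only delicate step is the bookkeeping in (ii): one must check that $\gamma_{n+p-1}^{-1}$ arises from no other route, neither through some $L_j$ nor through higher powers $r\ge2$. This is exactly what the order counting above settles, since any term containing $c_{n+1,p-1}$ already has order $s^{p-1}$, and the $L$'s involved carry only indices $\le p-1$.
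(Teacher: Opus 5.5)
Your proof is correct and follows the paper's argument. Both expand the continued fraction as a geometric series and extract the $s^p$ coefficient, $c_{n,p}=-\gamma_n^{-2}(L_{p-1}+c_{n+1,p-1})+(\text{terms with } r\ge2)$; both use order counting to show that $\gamma_{n+p-1}^{-1}$ enters only through $c_{n+1,p-1}$ at $r=1$, and both obtain (iii) by a separate induction on $j$ from $L_j=-2c_{1,j}$. The one cosmetic difference is that you treat $L_1,\dots,L_{p-1}$ as formal symbols and appeal to (iii) afterwards to justify the partial derivative, whereas the paper invokes (iii) at lower order inside the induction; both orderings are sound.
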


\begin{proof}
We prove (i) and (ii) by induction on $p$, simultaneously for all $n$, and deduce (iii).

\emph{Base $p=1$.} Expanding~\eqref{eq:cf} with $\lambda=O(s)$ and $u_{n+1}=O(s)$ gives
$u_n=s\gamma_n^{-1}+O(s^2)$, so $c_{n,1}=\gamma_n^{-1}$: a polynomial of degree one in
$\gamma_n^{-1}$ with $\partial c_{n,1}/\partial(\gamma_n^{-1})=1$, matching~\eqref{eq:slope}.

\emph{Induction.} Let $p\ge2$ and assume (i), (ii) at all orders $<p$ and all $n$. Expand
\begin{equation}\label{eq:expand}
 u_n=\frac{s}{\gamma_n}\Big(1+\frac{\lambda+u_{n+1}}{\gamma_n}\Big)^{-1}
 =\frac{s}{\gamma_n}\sum_{i\ge0}(-1)^i\Big(\frac{\lambda+u_{n+1}}{\gamma_n}\Big)^{i},
\end{equation}
legitimate as a formal, and by Lemma~\ref{lem:analytic} convergent, series because
$\lambda+u_{n+1}=O(s)$. Extracting the coefficient of $s^p$,
\begin{equation}\label{eq:cnp}
 c_{n,p}=-\frac{1}{\gamma_n^{2}}\big(L_{p-1}+c_{n+1,p-1}\big)+R_{n,p},
\end{equation}
where $R_{n,p}$ collects the terms with $i\ge2$ in~\eqref{eq:expand}. Each such term is a product
of $i\ge2$ coefficients of $\lambda+u_{n+1}$, of orders $\ge1$ summing to $p-1$, so every factor
has order at most $p-2$; by the inductive hypothesis those factors involve at most
$\gamma_{n+1},\dots,\gamma_{n+p-2}$ and $L_1,\dots,L_{p-2}$, whence $R_{n,p}$ is free of
$\gamma_{n+p-1}$. The term $L_{p-1}$ in~\eqref{eq:cnp} involves only $\gamma_1,\dots,\gamma_{p-1}$
by (iii) at lower order, and $p-1<n+p-1$ because $n\ge1$, so it too is free of $\gamma_{n+p-1}$.
Therefore $\gamma_{n+p-1}$ enters $c_{n,p}$ only through $c_{n+1,p-1}$, in which by the inductive
hypothesis it appears to degree exactly one. This proves (i) and the degree claim, and
differentiating~\eqref{eq:cnp},
\[
 \frac{\partial c_{n,p}}{\partial(\gamma_{n+p-1}^{-1})}
 =-\frac{1}{\gamma_n^{2}}\,\frac{\partial c_{n+1,p-1}}{\partial(\gamma_{n+p-1}^{-1})}
 =-\frac{1}{\gamma_n^{2}}(-1)^{p-2}\prod_{i=n+1}^{n+p-2}\gamma_i^{-2}
 =(-1)^{p-1}\prod_{i=n}^{n+p-2}\gamma_i^{-2},
\]
which is~\eqref{eq:slope}. Finally (iii) follows by induction on $j$: $L_j=-2c_{1,j}$, and by (i)
$c_{1,j}$ is a polynomial in $\gamma_1^{-1},\dots,\gamma_j^{-1}$ and $L_1,\dots,L_{j-1}$, each of
which involves only $\gamma_1,\dots,\gamma_{j-1}$ by the inductive hypothesis.
\end{proof}

\subsubsection{Proof of Theorem~\ref{thm:bulk}, and its sharp form}
Part (iii) of Lemma~\ref{lem:ladder} is exactly Theorem~\ref{thm:bulk}: two generators with
$\gamma_m=\gamma'_m$ for $m\le M$ have $l_j=l'_j$ for $j\le M$, so their bulk density dispersions
agree through $k^{2M}$, and $\gamma_{M+1}$ first enters at $l_{M+1}$. Part (ii) upgrades
``first possible difference'' to an identity.

\begin{theorem}[Sharp form]\label{thm:sharp}
Under the hypotheses of Theorem~\ref{thm:bulk}, $L_{M+1}$ is an affine function of
$\gamma_{M+1}^{-1}$:
\begin{equation}
 L_{M+1}=A_M(\gamma_1,\dots,\gamma_M)+\frac{2(-1)^{M+1}}{\gamma_{M+1}\prod_{n=1}^{M}\gamma_n^{2}},
 \qquad
 \frac{\partial l_{M+1}}{\partial\gamma_{M+1}}
 =\frac{(-1)^{M}\,v^{2M+2}}{2^{2M+1}\,\gamma_{M+1}^{2}\prod_{n=1}^{M}\gamma_n^{2}}\ne0 .
\end{equation}
Consequently, for two generators matching $\gamma_1,\dots,\gamma_M$, the bulk density dispersions
differ at order $k^{2M+2}$ \emph{if and only if} $\gamma_{M+1}\ne\gamma'_{M+1}$, and the leading
difference is exact rather than linearised:
\begin{equation}\label{eq:exactdiff}
 \lambda(k)-\lambda'(k)
 =\frac{2(-1)^{M+1}v^{2M+2}}{4^{M+1}\prod_{n=1}^{M}\gamma_n^{2}}
 \Big(\frac{1}{\gamma_{M+1}}-\frac{1}{\gamma'_{M+1}}\Big)k^{2M+2}+O(k^{2M+4}).
\end{equation}
\end{theorem}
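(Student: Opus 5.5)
The plan is to read everything off Lemma~\ref{lem:ladder} applied at $n=1$, $p=M+1$. By $L_{M+1}=-2c_{1,M+1}$ and part (i), $c_{1,M+1}$ is a polynomial in $\gamma_1^{-1},\dots,\gamma_{M+1}^{-1}$ and $L_1,\dots,L_M$. By part (iii), $L_1,\dots,L_M$ depend only on $\gamma_1,\dots,\gamma_M$, so they are constants once those rates are fixed. Part (ii) says $c_{1,M+1}$ has degree exactly one in $\gamma_{M+1}^{-1}$, with slope $(-1)^M\prod_{n=1}^M\gamma_n^{-2}$. Hence
\[
L_{M+1}=A_M(\gamma_1,\dots,\gamma_M)-2(-1)^M\gamma_{M+1}^{-1}\prod_{n=1}^M\gamma_n^{-2},
\]
and $-2(-1)^M=2(-1)^{M+1}$ gives the stated affine form.

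For the derivative, I will use $l_{M+1}=(v^2/4)^{M+1}L_{M+1}$ and $\partial\gamma_{M+1}^{-1}/\partial\gamma_{M+1}=-\gamma_{M+1}^{-2}$. This gives
\[
\frac{\partial l_{M+1}}{\partial\gamma_{M+1}}=\frac{v^{2M+2}}{4^{M+1}}\cdot 2(-1)^{M+1}\cdot(-1)\,\gamma_{M+1}^{-2}\prod_{n=1}^M\gamma_n^{-2}=\frac{(-1)^M v^{2M+2}}{2^{2M+1}\gamma_{M+1}^2\prod_{n=1}^M\gamma_n^2}.
\]
This is nonzero because $v\neq0$ and all $\gamma_n$ are finite and positive under the gap hypothesis.

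For the dispersion difference, take two generators matching $\gamma_1,\dots,\gamma_M$. Theorem~\ref{thm:bulk} gives $l_j=l'_j$ for $j\le M$, and the constants $A_M$ coincide because $A_M$ depends only on the matched rates. Therefore
\[
l_{M+1}-l'_{M+1}=\frac{2(-1)^{M+1}v^{2M+2}}{4^{M+1}\prod_{n=1}^M\gamma_n^2}\Big(\gamma_{M+1}^{-1}-\gamma_{M+1}'^{-1}\Big).
\]
This is exact, not a linearisation, because the dependence on $\gamma_{M+1}^{-1}$ is affine. Lemma~\ref{lem:analytic} makes both branches convergent power series in $k^2$, so the remainder is $O(k^{2M+4})$. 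The ``if and only if'' follows since the prefactor is nonzero and $\gamma\mapsto\gamma^{-1}$ is injective on positive reals.

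The only delicate point is checking that $A_M$ really is independent of $\gamma_{M+1}$, that is, that no hidden dependence enters through the $L_j$ appearing in $c_{1,M+1}$. This is exactly the content of part (i) combined with (iii) at orders $\le M$: the only $L$'s that appear are $L_1,\dots,L_M$, and they depend only on $\gamma_1,\dots,\gamma_M$. Beyond this, the work is sign and power-of-two bookkeeping.
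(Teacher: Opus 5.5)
Your proposal is correct and follows the paper's proof essentially line by line. Lemma~\ref{lem:ladder} at $n=1$, $p=M+1$ gives the affine dependence on $\gamma_{M+1}^{-1}$ and its slope; the factor $(v^2/4)^{M+1}$ together with $\partial_\gamma\gamma^{-1}=-\gamma^{-2}$ gives the derivative; and subtracting the two affine expressions gives the exact leading difference. Your explicit appeal to Lemma~\ref{lem:analytic} for the $O(k^{2M+4})$ remainder, and to injectivity of $\gamma\mapsto\gamma^{-1}$ for the ``if and only if'', only spells out steps the paper leaves implicit.
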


\begin{proof}
Apply Lemma~\ref{lem:ladder} with $n=1$ and $p=M+1$. It shows that the coefficient
$c_{1,M+1}$ is an affine function of $\gamma_{M+1}^{-1}$ with
slope $(-1)^{M}\prod_{n\le M}\gamma_n^{-2}$, so $L_{M+1}=-2c_{1,M+1}$ is affine, with
slope $2(-1)^{M+1}\prod_{n\le M}\gamma_n^{-2}$. The derivative in $\gamma_{M+1}$ follows from
$\partial_\gamma(\gamma^{-1})=-\gamma^{-2}$ and $l_{M+1}=(v^2/4)^{M+1}L_{M+1}$, using
$2(v^2/4)^{M+1}=v^{2M+2}/2^{2M+1}$. Since $\gamma_1,\dots,\gamma_M$ agree by assumption, so does
$A_M$, and~\eqref{eq:exactdiff} is the difference of the two affine expressions.
\end{proof}

Theorem~\ref{thm:sharp} is a statement about the scalar ladder~\eqref{eq:cf}, that is, about one species moving at constant speed; Section~\ref{app:twospecies} shows which part of it survives when the speed depends on the sensed densities and the species are coupled. Within that scope it explains why the log--log exponents measured in 80-digit arithmetic sit
stably on $2M+2$ rather than above it: the coefficient at that order cannot accidentally vanish
for any admissible rate table. The formula was also checked in exact rational arithmetic. Solving~\eqref{eq:cf} order by order on the non-degenerate table $\gamma_n=(1+n)/(3+2n)$, retaining orders through $s^{8}$ and truncating the
ladder at $n=12$, the coefficient $L_{M+1}$ is affine in $\gamma_{M+1}^{-1}$ to exact rational
equality at three distinct values of $\gamma_{M+1}$, with slopes $25/2$, $-1225/18$, $11025/32$ and
$-53361/32$ for $M=1,2,3,4$, each equal to the predicted
$2(-1)^{M+1}\prod_{n\le M}\gamma_n^{-2}$ exactly. The lowest coefficients are
$l_1=-v^2/(2\gamma_1)$, the standard active diffusivity, and
$L_2=2\gamma_1^{-2}(\gamma_2^{-1}-2\gamma_1^{-1})$.

\subsection{Two species with density-dependent speeds}
\label{app:twospecies}

The model of the main text has two species whose speeds depend on the sensed densities, so its
uniform-state hierarchy is a matrix ladder in place of the scalar one~\eqref{eq:cf}.

Let the two species share the angular rates $\gamma_m$, let the speed be independent of the
orientation, and let the sensing kernel be translation invariant and inversion symmetric with
Fourier symbol $\hat K_u(k)$ analytic near $k=0$. Write
\begin{equation}
 V=\diag(\bar v_1,\bar v_2),\qquad
 H_{su}(k)=\bar v_s\delta_{su}
 +\bar\rho_s\left.\frac{\partial v_s}{\partial\tilde\rho_u}\right|_{\bar\rho}\hat K_u(k),
 \qquad z=\lambda+D_tk^2 .
\end{equation}
Nonreciprocity is carried by $H$, which is not symmetric. Linearising about the uniform state and
keeping the even angular sector, the density $\rho$ and the harmonics $a_m$ obey
\begin{equation}
 z\rho=-ikVa_1,\quad
 (z+\gamma_1)a_1=-\tfrac{ik}{2}\big[H(k)\rho+Va_2\big],\quad
 (z+\gamma_m)a_m=-\tfrac{ik}{2}V(a_{m-1}+a_{m+1}),\ m\ge2 .
\end{equation}
Eliminating from the top gives a \emph{matrix} continued fraction and the density eigenvalue
condition
\begin{equation}\label{eq:matcf}
 Q_m=(z+\gamma_m)I+\frac{k^2}{4}VQ_{m+1}^{-1}V,\qquad
 \det\!\Big[zI+\frac{k^2}{2}VQ_1^{-1}H(k)\Big]=0 .
\end{equation}

\begin{proposition}[Blindness with species coupling]\label{prop:matrix}
Suppose the two eigenvalues of $A_0=VH_0/(2\gamma_1)$ are simple, with right and left vectors
$r_j,\ell_j$ normalised by $\ell_j^{\dagger}r_j=1$. Let two generators share $V$, $H(k)$, $D_t$ and
the rates $\gamma_1,\dots,\gamma_M$. Then the corresponding slow density branches satisfy
$\lambda^+_j(k)-\lambda^-_j(k)=O(k^{2M+2})$, with
\begin{equation}\label{eq:matcoef}
 \Delta\lambda_j(k)=
 \frac{(-1)^{M+1}\,\ell_j^{\dagger}V^{2M+1}H_0r_j}{2^{2M+1}\prod_{n=1}^{M}\gamma_n^{2}}
 \Big(\frac{1}{\gamma^+_{M+1}}-\frac{1}{\gamma^-_{M+1}}\Big)k^{2M+2}+O(k^{2M+4}).
\end{equation}
\end{proposition}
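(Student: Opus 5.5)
The plan is to repeat the order counting of Lemma~\ref{lem:ladder} with matrix coefficients in the ladder~\eqref{eq:matcf}. The nonlinear eigenvalue condition is then treated as a perturbation of the simple eigenvalues of $A_0$. Write $\sigma=k^2/4$.

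\emph{Step 1: analyticity.} First I would establish a matrix version of Lemma~\ref{lem:analytic}. On the set of sequences $(Q_m^{-1})_{m\ge1}$ with $\sup_m\|Q_m^{-1}-\gamma_m^{-1}I\|$ small, the map $Q_{m+1}^{-1}\mapsto[(z+\gamma_m)I+\sigma VQ_{m+1}^{-1}V]^{-1}$ is a contraction in the supremum operator norm. This holds for $|z|$ and $|\sigma|$ below thresholds fixed by $\gamma_{\min}$ and $\|V\|$, by the same Neumann-series estimates as before. Hence $Q_1^{-1}(z,k)$ is analytic near the origin. So is $G(z,k):=\tfrac12VQ_1^{-1}(z,k)H(k)$, since $H$ is analytic by assumption.

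\emph{Step 2: where $\gamma_{M+1}$ first enters.} Expand each inverse as
\[
Q_m^{-1}=\gamma_m^{-1}I-\gamma_m^{-2}\big(zI+\sigma VQ_{m+1}^{-1}V\big)+\dots .
\]
By induction down the ladder, exactly as in Lemma~\ref{lem:ladder}(i)--(ii), the coefficient of $\sigma^p$ in $Q_n^{-1}$ (for fixed $z$) involves only $\gamma_n,\dots,\gamma_{n+p}$. Its affine dependence on $\gamma_{n+p}^{-1}$ comes only from the nested linear term, with slope $(-1)^p\prod_{i=n}^{n+p-1}\gamma_i^{-2}\,V^{p}\,I\,V^{p}$. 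Because $V$ is diagonal, this slope collapses to $V^{2p}$. The $z$-terms carry extra powers of $z=O(k^2)$, so they push any $\gamma_{M+1}$-dependence to order $k^{2M+2}$ or higher inside $Q_1^{-1}$. Taking $n=1$, $p=M$ gives, at equal $z$,
\[
\Delta G=\tfrac12(-1)^M4^{-M}k^{2M}\prod_{n\le M}\gamma_n^{-2}\,V^{2M+1}H_0\,\Delta(\gamma_{M+1}^{-1})+O(k^{2M+2}).
\]

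\emph{Step 3: eigenvalue perturbation.} The density condition reads $\det[zI+k^2G(z,k)]=0$. Writing $z=-k^2\mu$, we need $\mu$ to be an eigenvalue of $G(-k^2\mu,k)$. At $k=0$ we have $G=A_0$, whose eigenvalues are simple. The implicit function theorem, applied to the analytic eigenvalue branch $\mu_j(z,k)$ of $G$ near each simple eigenvalue, then gives analytic branches $\mu_j(k)$ for both generators. Their difference satisfies
\[
\Delta\mu_j=\ell_j^\dagger\,\Delta G\,r_j+O(k^{2M+2})+\partial_z\mu_j\,\Delta z .
\]
The last term is self-consistent: $\Delta z=-k^2\Delta\mu_j$ and $\partial_z\mu_j=O(1)$, so it is $O(k^2)\cdot\Delta\mu_j$ and can be absorbed. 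This gives $\Delta\mu_j=O(k^{2M})$ with the displayed leading term. Since $D_t$ is common, $\Delta\lambda_j=\Delta z=-k^2\Delta\mu_j$. Substituting, the prefactor becomes $-\tfrac12(-1)^M4^{-M}=(-1)^{M+1}/2^{2M+1}$, which is~\eqref{eq:matcoef}.

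\emph{Main obstacle.} I expect the hard part to be the bookkeeping in Step 2 when $z$ and $\sigma$ are not independent small parameters but are coupled through the branch. The aim there is to show that no cross term can carry $\gamma_{M+1}$ below order $k^{2M}$ in $G$. I would handle this by first treating $z$ as a free variable and proving the statement jointly in $(z,\sigma)$, using $z$-degree to count orders. Only afterwards would I substitute $z=O(k^2)$, so that the scalar argument of Lemma~\ref{lem:ladder} carries over with commutativity of the diagonal $V$ as the only new ingredient.
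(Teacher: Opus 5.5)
Your proposal is correct and follows essentially the same route as the paper's sketch. You propagate the $\gamma_{M+1}$ mismatch down the matrix ladder, where the scalar damping factors collect into $(-1)^M(k^2/4)^M V^{2M}\prod_{n\le M}\gamma_n^{-2}\,\Delta(\gamma_{M+1}^{-1})$, multiply by $\tfrac{k^2}{2}V$ and $H_0$, and project onto the simple branch through $z\propto k^2$ and the implicit function theorem; your contraction argument for analyticity and the explicit absorption of the $\partial_z\mu_j\,\Delta z$ term fill in steps that the paper's sketch leaves implicit.
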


\begin{proof}[Proof sketch]
The mismatch enters only through $Q_{M+1}$. Propagating it down~\eqref{eq:matcf} costs one factor
$k^2/4$ per level and one inverse damping factor on each side; because the $\gamma_n$ are scalars
and $V$ is diagonal, those factors commute and collect into
$\Delta Q_1^{-1}=(-1)^M(k^2/4)^MV^{2M}\Delta(\gamma_{M+1}^{-1})\prod_{n\le M}\gamma_n^{-2}
+O(k^{2M+2})$. Multiplying by the $k^2V/2$ and $H_0$ of~\eqref{eq:matcf} and projecting onto the
simple branch with $\ell_j,r_j$ gives~\eqref{eq:matcoef}; analyticity in $k^2$ follows from the
reduction $z=k^2\zeta$ and the implicit function theorem at a simple root.
\end{proof}

We also evaluated~\eqref{eq:matcoef} at the parameters of the main text, using the speed law,
interaction matrix and sensing symbol of the Methods. Here
$\bar v=(1.60614,0.64123)$, so the asymptotic window is set by
$kv_{\max}/\gamma_{\min}\le0.04$; orders $M=3,4$ fall below double precision inside that window
and were recomputed in 60-digit arithmetic. Supplementary Table~\ref{tab:e1} reports the ratio of the measured
first-difference coefficient to~\eqref{eq:matcoef} at the finest wavenumber, together with the
bilinear factor, which is nonzero on both branches at this parameter point.

\begin{table}[htbp]\centering
\caption{\textbf{First-difference coefficient of the actual two-species model against
Eq.~\eqref{eq:matcoef}: ratio measured/predicted at $kv_{\max}/\gamma_{\min}=0.01$, and the
bilinear factor $\ell_j^{\dagger}V^{2M+1}H_0r_j$ on each slow branch.}}\label{tab:e1}
\footnotesize\setlength{\tabcolsep}{5pt}
\begin{tabular}{ccrrl}
\toprule
$M$ & branch & bilinear factor & ratio & precision\\
\midrule
1 & 1 & $-0.123927$ & $1.000004$ & double\\
1 & 2 & $2.129373$ & $0.999991$ & double\\
2 & 1 & $0.252537$ & $0.999483$ & double\\
2 & 2 & $5.556205$ & $0.999773$ & double\\
3 & 1 & $0.886758$ & $0.999960$ & $60$ digits\\
3 & 2 & $14.35926$ & $0.999975$ & $60$ digits\\
4 & 1 & $2.384314$ & $0.999953$ & $60$ digits\\
4 & 2 & $37.05321$ & $0.999971$ & $60$ digits\\
\bottomrule
\end{tabular}
\end{table}

The blindness statement of Theorem~\ref{thm:bulk} therefore carries over to the interacting
two-species model unchanged: matching $\gamma_1,\dots,\gamma_M$ still makes the bulk density
dispersion agree through $k^{2M}$. The sharpening of Theorem~\ref{thm:sharp} does \emph{not} carry
over. Its scalar slope is replaced by the bilinear factor $\ell_j^{\dagger}V^{2M+1}H_0r_j$, which
can vanish for particular $V,H_0$; the first difference then appears at higher order still, so the
``if and only if'' of Theorem~\ref{thm:sharp} is specific to one species at constant speed. Such cancellation can only make the bulk \emph{more} blind.

We verified~\eqref{eq:matcoef} numerically on a synthetic non-reciprocal system,
$V=\diag(1,1.25)$ and $H_0=\left(\begin{smallmatrix}1.6&0.3\\-0.1&0.7\end{smallmatrix}\right)$,
with the matrix ladder truncated at $60$ levels and each branch obtained by self-consistent
iteration. Supplementary Table~\ref{tab:matrix} gives the measured $\Delta\lambda_j/k^{2M+2}$ against the
prediction.

\begin{table}[htbp]\centering
\caption{\textbf{Measured first-difference coefficient of the two-species matrix ladder against
Eq.~\eqref{eq:matcoef}, for kernel pairs matched through $\gamma_M$.}}\label{tab:matrix}
\begin{tabular}{ccrrc}
\toprule
$M$ & branch & predicted & measured at $k=0.02$ & ratio\\
\midrule
1 & 1 & $0.1238475$ & $0.1238792$ & $1.00026$\\
1 & 2 & $0.1207010$ & $0.1208476$ & $1.00121$\\
2 & 1 & $-0.0493649$ & $-0.0493804$ & $1.00031$\\
2 & 2 & $-0.0276178$ & $-0.0276649$ & $1.00171$\\
3 & 1 & $0.0195299$ & $0.0195358$ & $1.00030$\\
3 & 2 & $0.0059055$ & $0.0059356$ & $1.00510$\\
\bottomrule
\end{tabular}
\end{table}

\suppnote{The nonlinear long-wave closure}

\begin{theorem}[Nonlinear closure blindness, with explicit first difference]\label{thm:nlclosure}
Set $X=\delta x$, $T=\delta^2t$, fix the density coordinate $f_{s,0}=\rho_s$ and expand the full
angular hierarchy, retaining cross-species speeds, the even gradient corrections of the sensing
symbol $\hat K$, translational diffusion, and the slow-time Fr\'echet feedback. Then
$f_{s,m}=\sum_{n\ge m,\;n-m\ \mathrm{even}}\delta^{n}A_{s,m,n}[\rho]$ with $A_{s,m,n}$ depending on
$\gamma_1,\dots,\gamma_{(n+m)/2}$ only, so that in
$\partial_T\rho_s=\sum_{j\ge1}\delta^{2j-2}\mathcal F_{s,2j}[\rho]$ the coefficient
$\mathcal F_{s,2j}$ depends on $\gamma_1,\dots,\gamma_j$ alone. Two generators matching
$\gamma_1,\dots,\gamma_M$ therefore have \emph{identical nonlinear closures} through order $2M$, and
the first difference is explicit:
\begin{equation}\label{eq:firstdiff}
 \mathcal F^{+}_{s,2M+2}-\mathcal F^{-}_{s,2M+2}
 =\frac{\gamma_{M+1,+}^{-1}-\gamma_{M+1,-}^{-1}}{2^{2M+1}\prod_{j=1}^{M}\gamma_j^{2}}\,
  \mathcal D_s^{\,2M+2}\rho_s,
 \qquad \mathcal D_sg:=\partial_X\big[v^{(0)}_s(\rho)\,g\big].
\end{equation}
\end{theorem}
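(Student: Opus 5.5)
The plan is to run a Chapman--Enskog-type expansion of the full nonlinear angular hierarchy in the scaled variables. Then prove the dependence statement for $A_{s,m,n}$ by a double induction that mirrors the ladder lemma (Lemma~\ref{lem:ladder}). Finally, extract the first difference by tracking the single path through the ladder that reaches harmonic $M+1$ and returns.

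\textbf{Step 1: set up the hierarchy.} In the variables $X=\delta x$, $T=\delta^2 t$, the hierarchy for $m\ge1$ reads
\[
\gamma_m f_{s,m}=-\delta\,\tfrac12\partial_X\big[v_s(f_{s,m-1}+f_{s,m+1})\big]+\delta^2 D_t\partial_X^2 f_{s,m}-\delta^2\partial_T f_{s,m}.
\]
Here $v_s=v_s(\tilde\rho)$. The sensed density $\tilde\rho$ is expanded as $\tilde\rho=\rho+\sum_{r\ge1}\delta^{2r}\kappa_r\partial_X^{2r}\rho$, using the even Taylor coefficients of $\hat K$. The time derivative is $\partial_T f_{s,m}=\sum_j\delta^{2j-2}\,Df_{s,m}[\rho]\cdot\mathcal F_{j}$, which is the Fr\'echet feedback. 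The density equation is
\[
\partial_T\rho_s=-\delta^{-1}\partial_X(v_s f_{s,1})+D_t\partial_X^2\rho_s.
\]

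\textbf{Step 2: parity and dependence by induction.} I would use the ansatz $f_{s,m}=\sum_n\delta^nA_{s,m,n}$ and match powers of $\delta$. This gives
\[
A_{s,m,n}=\gamma_m^{-1}\Big(-\tfrac12\partial_X[v^{(0)}_s(A_{s,m-1,n-1}+A_{s,m+1,n-1})]+\text{terms of order }\le n-2\Big).
\]
The lower-order terms collect the sensing corrections, $D_t$, and $\partial_T$. The vanishing of $A_{s,m,n}$ for $n<m$, and parity in $n-m$, follow by induction because each level shift costs one $\delta$. The key claim is that $A_{s,m,n}$ depends only on $\gamma_1,\dots,\gamma_{(n+m)/2}$. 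I would prove it by induction on $n$, with the "reach" $(n+m)/2$ playing the role of $n+p-1$ in Lemma~\ref{lem:ladder}. For the term $A_{s,m+1,n-1}$, the reach is $(n+m)/2$. For $A_{s,m-1,n-1}$, it is $(n+m)/2-1$. The order-$\le n-2$ terms have reach at most $(n+m)/2-1$. This holds provided the feedback term involves only $\mathcal F_{s,2i}$ with $i\le (n-2)/2+\dots$.

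This is where the two inductions must be interleaved. Reading off $\mathcal F_{s,2j}$ from $A_{s,1,2j-1}$, which has reach $j$, gives the dependence on $\gamma_1,\dots,\gamma_j$. That fact is then fed back into the $\partial_T$ terms at the next order. I expect this bookkeeping to be the main obstacle. The difficulty is checking that the Fr\'echet feedback $Df_{s,m}\cdot\mathcal F_i$ appearing at order $\delta^{n}$ only uses $\mathcal F_i$ with $2i\le n+m-2$. I would handle it by assigning weight $2i$ to $\mathcal F_i$ and verifying that every term is weight-homogeneous.

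\textbf{Step 3: the first difference.} Take the difference of the two generators at order $2M+2$ in the density equation. By Step 2, every contribution with reach $\le M$ cancels. The only surviving term is the one in which $\gamma_{M+1}$ enters linearly, namely the pure streaming chain. It climbs from $\rho_s$ to harmonic $M+1$ by $M+1$ applications of $-\tfrac12\mathcal D_s$ and descends back by $M$ such applications. The final $-\partial_X(v_s\,\cdot)$ in the density equation supplies one more factor, giving $\mathcal D_s$ with coefficient $2$ relative to $\tfrac12$.

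At each intermediate level $n\le M$, one factor $\gamma_n^{-1}$ appears on the ascent and one on the descent. At the top a single factor $\gamma_{M+1}^{-1}$ appears. The sign is $(-1)^{2M+2}=+1$ from the operator factors, combined with the leading minus sign of the density equation. This yields $2\cdot 2^{-(2M+2)}=2^{-(2M+1)}$ times $\mathcal D_s^{2M+2}\rho_s$.

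Any correction involving $D_t$, sensing gradients, or feedback that also touches level $M+1$ needs at least two extra $\delta$, so it enters only at order $\ge2M+4$. Hence the difference is exactly linear in $\Delta\gamma_{M+1}^{-1}$, matching \eqref{eq:firstdiff}. As a consistency check, linearizing recovers the scalar slope of Theorem~\ref{thm:sharp}.
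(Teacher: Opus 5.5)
Your proposal is correct and follows the paper's proof. Both use the same triangular Chapman--Enskog recursion, with the reach $(n+m)/2$ proved by induction on $n$, and the same shortest-path argument (up to $M+1$ and back) for the first difference, which the paper phrases as the downward iteration $\Delta A_{s,m,2M+2-m}=-(2\gamma_m)^{-1}\mathcal D_s\,\Delta A_{s,m+1,2M+1-m}$ starting from the lowest diagonal $A_{s,M+1,M+1}$. The feedback bookkeeping you flag as the obstacle is immediate: $D_\rho A_{s,m,n-2j}[\mathcal F_{2j}]$ is nonzero only when $n-2j\ge m$, so $j\le(n-m)/2\le(n+m)/2-1$. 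Also, your count $(-1)^{2M+2}$ already includes the minus sign of the density equation, so that sign must not be applied a second time.
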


\subsection{Proof of Theorem~\ref{thm:nlclosure}}
\label{app:nlclosure}

Sections~\ref{app:bulk} and~\ref{app:twospecies} concern the linearised dispersion of the uniform
state. This Note proves the corresponding statement for the full nonlinear long-wave closure,
on which the conclusion of the main text rests. Throughout, $X=\delta x$, $T=\delta^2t$, the
density coordinate is fixed as $f_{s,0}=\rho_s$, and all forbidden or negative indices are zero.

\subsubsection{An all-order formal recursion}

Write $v_s=\sum_{a\ge0}\delta^{2a}v_{s,2a}[\rho]$, whose coefficients are determined by the speed
law and the even gradient corrections of the sensing symbol and carry no dependence on the turning
kernel. In particular $v^{(0)}_s=v_s(\rho)$ and, from $\hat K(k)=1-\tfrac{3}{40}k^2+\tfrac1{448}k^4+O(k^6)$,
\begin{equation}\label{eq:v2}
 v^{(2)}_s=\frac{3}{40}\sum_u\partial_{\rho_u}v^{(0)}_s\,\partial_X^2\rho_u.
\end{equation}

\begin{theorem}[All-order formal nonlinear matching]\label{thm:allorder}
There is a unique recursive formal Chapman--Enskog expansion
\begin{equation}
 f_{s,m}=\sum_{\substack{n\ge m\\ n-m\ \mathrm{even}}}\delta^nA_{s,m,n}[\rho],
 \qquad
 \partial_T\rho_s=\sum_{j\ge1}\delta^{2j-2}\mathcal F_{s,2j}[\rho],
\end{equation}
in which $A_{s,m,n}$ depends on the turning generator only through
$\gamma_1,\dots,\gamma_{(n+m)/2}$. Consequently $\mathcal F_{s,2j}$ depends only on
$\gamma_1,\dots,\gamma_j$, and matching the first $M$ rates makes every nonlinear density
coefficient through gradient order $2M$ identical.
\end{theorem}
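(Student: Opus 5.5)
We write the nonlinear angular hierarchy in harmonics and solve it recursively in powers of $\delta$, then prove the dependence claim by strong induction on $n$, mirroring the ladder lemma (Lemma~\ref{lem:ladder}) but at the level of nonlinear operators instead of scalar coefficients. In slow variables the hierarchy reads
\[
 \delta^2\partial_T f_{s,m}=-\tfrac{\delta}{2}\,\partial_X\big[v_s(f_{s,m-1}+f_{s,m+1})\big]+\delta^2D_t\partial_X^2f_{s,m}-\gamma_mf_{s,m},\qquad m\ge1,
\]
with the $m=0$ equation giving $\partial_T\rho_s=-\delta^{-1}\partial_X(v_sf_{s,1})+D_t\partial_X^2\rho_s$. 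Since $\gamma_m\ge\gamma_{\min}>0$ by Lemma~\ref{lem:gap}, each equation with $m\ge1$ can be solved for $f_{s,m}$, which gives
\[
 f_{s,m}=\gamma_m^{-1}\Big\{-\tfrac{\delta}{2}\partial_X[v_s(f_{s,m-1}+f_{s,m+1})]+\delta^2D_t\partial_X^2f_{s,m}-\delta^2\partial_Tf_{s,m}\Big\}.
\]
In this relation $\partial_T$ acting on a functional of $\rho$ is expanded through its Fréchet derivative composed with the series for $\partial_T\rho$. Collecting the coefficient of $\delta^n$ then defines $A_{s,m,n}$ recursively. The coefficient involves $A_{s,m\pm1,n-1}$ multiplied by the kernel-independent $v_{s,2a}$, the term $A_{s,m,n-2}$, and Fréchet terms $DA_{s,m,n'}\cdot\mathcal F_{u,2j}$ with $n'+2j=n$. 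The recursion is well founded on $n$ because every term on the right has strictly smaller total order $n$, and this well-foundedness gives uniqueness. The parity claim, that $n-m$ is even, follows from the same recursion, since each step changes $m$ by $\pm1$ together with $n$ by one, or leaves $m$ fixed while raising $n$ by two.

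Next we prove by strong induction on $n$ that $A_{s,m,n}$ depends only on $\gamma_1,\dots,\gamma_{(n+m)/2}$. Each term of the recursion is checked against the index $K(m,n)=(n+m)/2$.
\begin{itemize}
\item The prefactor $\gamma_m^{-1}$ is allowed because $m\le K$.
\item The term $A_{s,m+1,n-1}$ has index $(n-1+m+1)/2=K$, so it stays within the allowed rates. This is the term that lets $\gamma_K$ enter, and it is the analogue of $c_{n+1,p-1}$ in~\eqref{eq:cnp}.
\item The term $A_{s,m-1,n-1}$ has index $K-1$.
\item The term $A_{s,m,n-2}$ has index $K-1$.
\item The Fréchet terms need the induction on the flux coefficients run jointly. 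We have $\mathcal F_{u,2j}$ equal to $-\partial_X(v_{u,\cdot}A_{u,1,\cdot})$ at total order $2j-1$, plus diffusion, so $\mathcal F_{u,2j}$ depends on $\gamma_1,\dots,\gamma_j$ because $A_{u,1,2j-1}$ has index $j$. In $DA_{s,m,n'}\cdot\mathcal F_{u,2j}$ with $n'=n-2j$, the index is $\max\big((n-2j+m)/2,\,j\big)\le K$, because $j\le n/2\le K$ whenever $n'\ge m\ge0$.
\end{itemize}
The induction therefore closes, and the statement about $\mathcal F_{s,2j}$ follows immediately, as does matching through order $2M$.

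For the explicit first difference we follow the proof of Theorem~\ref{thm:sharp}. The rate $\gamma_{M+1}$ first enters $\mathcal F_{s,2M+2}$ through $A_{s,1,2M+1}$, and only along the single path that climbs the ladder $1\to2\to\dots\to M+1$ and descends back. Every other route uses either a lower-order Fréchet feedback or a diagonal step, and each of these lowers the index, so such routes contain no $\gamma_{M+1}$. Along the pure path only the leading speed $v^{(0)}_s$ appears. The path consists of $M+1$ up-steps and $M+1$ down-steps, each contributing $-\tfrac12\mathcal D_s$ up to the placement of the $\gamma$'s, with the outer $m=0$ flux supplying the final $\mathcal D_s$. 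The damping factors give $\gamma_{M+1}^{-1}\prod_{j\le M}\gamma_j^{-2}$. Counting the halves gives $2^{-(2M+1)}$ after the $m=0$ equation supplies a factor of one instead of one half, and the signs $(-1)^{2M+2}=+1$ cancel, which leaves the operator $\mathcal D_s^{2M+2}\rho_s$ in~\eqref{eq:firstdiff}.

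The main obstacle is the Fréchet-feedback bookkeeping. We must show that no slow-time term at order $2M+2$ can carry $\gamma_{M+1}$, and we must handle the delicate point that terms linear in the mismatch could be produced by $\partial_T$ acting on lower coefficients. The plan is to treat the mismatch as a formal perturbation and to verify that it is first born in $A_{s,M+1,M+1}=\gamma_{M+1}^{-1}(\cdots)$ and propagates down only through the off-diagonal streaming terms. As an independent consistency check, linearising the formula must reproduce Proposition~\ref{prop:matrix}.
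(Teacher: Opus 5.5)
Your proposal is correct and follows essentially the same route as the paper. Both use the triangular recursion in $n$, where the feedback $\mathcal F_{2j}$ is already known because $2j-1\le n-m-1<n$, and both run a strong induction on the index $(n+m)/2$. In that induction only the $m+1$ streaming term reaches the top index, and the feedback terms are bounded by $\max\{(n-2j+m)/2,\,j\}\le(n+m)/2$. One small slip: the streaming terms should pair $v_{s,2a}$ with $A_{s,m\pm1,n-1-2a}$ rather than $A_{s,m\pm1,n-1}$, but this only lowers the index. The Fr\'echet-feedback concern in your last paragraph is settled by the same bound. At $n=2M+2-m$ it forces $j\le M+1-m\le M$, so every feedback term involves only matched rates.
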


\begin{proof}
Substituting the expansions into the invariance equation for the fast moments gives
\begin{equation}\label{eq:recurrence}
 \gamma_mA_{s,m,n}=-\frac12\partial_X\!\!\sum_{2a+b=n-1}\!\!v_{s,2a}\big(A_{s,m-1,b}+A_{s,m+1,b}\big)
 +D_t\partial_X^2A_{s,m,n-2}-\sum_{j\ge1}D_\rho A_{s,m,n-2j}[\mathcal F_{2j}],
\end{equation}
the last sum being the slow-time Fr\'echet feedback, while the density equation gives
\begin{equation}\label{eq:densityrec}
 \mathcal F_{s,2j}=\mathbf 1_{j=1}D_t\partial_X^2\rho_s
 -\partial_X\!\!\sum_{2a+b=2j-1}\!\!v_{s,2a}A_{s,1,b}.
\end{equation}
These are triangular in $n$: in the feedback sum $n-2j\ge m$ forces $2j-1\le n-m-1<n$, so
$\mathcal F_{2j}$ is already known when it is needed, and division by the positive scalar
$\gamma_m$ determines each coefficient uniquely.

For the rate bound, argue by induction on $n$. In~\eqref{eq:recurrence} the $m{+}1$ term uses rates
through $\gamma_{(n+m-2a)/2}$, the $m{-}1$ term through $\gamma_{(n+m-2-2a)/2}$, and the diffusion
term through $\gamma_{(n+m-2)/2}$; a feedback term uses rates through
$\max\{(n-2j+m)/2,\,j\}$, which is at most $(n+m)/2$ wherever it occurs. Differentiating with
respect to density jets introduces no new rates, and multiplying by $\gamma_m^{-1}$ stays inside the
bound because $n\ge m$. Inserting this into~\eqref{eq:densityrec} gives $\gamma_j$ at physical order
$2j$. Parity and the lowest diagonal $n=m$ follow from the same recursion. The conclusion is an
identity in the formal algebra of differential expressions, not an estimate between solutions of the
two partial differential equations.
\end{proof}

\subsubsection{The first mismatch in closed form}

For each species and at a fixed density field put
\begin{equation}\label{eq:Dv}
 \mathcal D_sg:=\partial_X\big\{v^{(0)}_s(\rho)\,g\big\}.
\end{equation}
Powers of $\mathcal D_s$ mean repeated application, so they differentiate the density-dependent
speed as well; these are nonlinear differential functions of \emph{all} density components, not
powers of a constant-speed operator.

\begin{theorem}[Explicit first nonlinear difference]\label{thm:firstdiffapp}
Under Theorem~\ref{thm:allorder}, two generators matching $\gamma_1,\dots,\gamma_M$ satisfy
Eq.~\eqref{eq:firstdiff}. No rate beyond $\gamma_{M+1}$ occurs in that coefficient.
\end{theorem}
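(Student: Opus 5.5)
The plan is to track how $\gamma_{M+1}$ first enters the recursion~\eqref{eq:recurrence}--\eqref{eq:densityrec}, in the same way Lemma~\ref{lem:ladder}(ii) tracks it in the scalar ladder. By Theorem~\ref{thm:allorder}, $A_{s,m,n}$ depends only on $\gamma_1,\dots,\gamma_{(n+m)/2}$. The coefficient $\mathcal F_{s,2M+2}$ is built from $A_{s,1,b}$ with $b\le 2M+1$, so only $\gamma_1,\dots,\gamma_{M+1}$ can occur, which proves the last sentence of Theorem~\ref{thm:firstdiffapp}. The remaining task is to show that, when $\gamma_1,\dots,\gamma_M$ are frozen, the $\gamma_{M+1}$-dependence of $\mathcal F_{s,2M+2}$ is exactly the displayed term.

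\emph{Which coefficients can feel $\gamma_{M+1}$.} I would show that, among coefficients entering $\mathcal F_{s,2M+2}$, $\gamma_{M+1}$ occurs only in $A_{s,m,n}$ with $(n+m)/2\ge M+1$, that is on or above the ``diagonal'' $n+m=2M+2$.

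Inside $\mathcal F_{s,2M+2}$ we need $A_{s,1,b}$ with $b+2a=2M+1$. The condition $(b+1)/2\ge M+1$ forces $b=2M+1$ and $a=0$, so the only relevant term is $-\partial_X v_{s,0}A_{s,1,2M+1}$.

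Next I follow $A_{s,1,2M+1}$ through~\eqref{eq:recurrence}. On the line $n+m=2M+2$:
\begin{itemize}
\item the $m-1$ term, the diffusion term and any term with $a\ge1$ all drop the index sum below $2M+2$, so they are free of $\gamma_{M+1}$;
\item a feedback term $D_\rho A_{s,m,n-2j}[\mathcal F_{2j}]$ uses rates up to $\max\{(n-2j+m)/2,\,j\}$;
\item the first entry is below $M+1$ since $j\ge1$, and $j\le M+1$ could equal $M+1$ only if $n-2j\ge m$ with $n=2M+2-m$, i.e.\ $m\le 0$, which is excluded.
\end{itemize}
Hence $\gamma_{M+1}$ enters $A_{s,m,n}$ on this line only through $-\tfrac12\partial_X v_{s,0}A_{s,m+1,n-1}$, divided by $\gamma_m$.

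\emph{Closing the chain.} Iterating from $(m,n)=(1,2M+1)$ up to $(M+1,M+1)$ gives
\[
\delta_{\gamma_{M+1}}A_{s,1,2M+1}=\Big(-\tfrac12\Big)^{M}\prod_{m=1}^{M}\gamma_m^{-1}\,\mathcal D_s^{M}\,\delta A_{s,M+1,M+1}.
\]
Here $\delta$ denotes the change between the two generators; it is exact, not linearised, because the maps involved are affine in the top-level coefficient. The lowest diagonal $A_{s,m,m}$ obeys $\gamma_mA_{s,m,m}=-\tfrac12\mathcal D_sA_{s,m-1,m-1}$ with $A_{s,0,0}=\rho_s$, since all other terms have lower index sum or vanish by parity. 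Therefore
\[
A_{s,M+1,M+1}=\Big(-\tfrac12\Big)^{M+1}\prod_{m=1}^{M+1}\gamma_m^{-1}\,\mathcal D_s^{M+1}\rho_s .
\]
Its difference is $(-\tfrac12)^{M+1}\prod_{m\le M}\gamma_m^{-1}\,\Delta(\gamma_{M+1}^{-1})\,\mathcal D_s^{M+1}\rho_s$.

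Combining the two and applying $-\mathcal D_s$ from~\eqref{eq:densityrec} gives
\[
\Delta\mathcal F_{s,2M+2}=-\Big(\tfrac14\Big)^{M}\Big(-\tfrac12\Big)\prod_{m\le M}\gamma_m^{-2}\,\Delta(\gamma_{M+1}^{-1})\,\mathcal D_s^{2M+2}\rho_s .
\]
The prefactor is $2^{-2M-1}\prod_{m\le M}\gamma_m^{-2}$, which matches~\eqref{eq:firstdiff}, including the positive sign.

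\emph{Main obstacle.} The hard part is the bookkeeping of the Fréchet-feedback terms and of the $m-1$ branch: I must be sure that none of them carries $\gamma_{M+1}$ on the critical diagonal. I would handle this by a small induction lemma, \emph{``$A_{s,m,n}$ is free of $\gamma_{M+1}$ whenever $n+m<2M+2$''}, proved directly from the rate bound of Theorem~\ref{thm:allorder}.

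A secondary check is the claim that above the diagonal ($n+m>2M+2$) nothing feeds back into $\mathcal F_{s,2M+2}$. This holds because $b=2M+1$ with $m=1$ already sits exactly on the diagonal.
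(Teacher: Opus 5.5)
Your proposal is correct and follows essentially the same route as the paper. Both arguments use the closed form on the lowest diagonal $A_{s,m,m}$, and both observe that on the line $n+m=2M+2$ the mismatch can enter only through the $a=0$ part of the $m+1$ term, giving $\Delta A_{s,m,2M+2-m}=-(2\gamma_m)^{-1}\mathcal D_s\,\Delta A_{s,m+1,2M+1-m}$; iterating down to $m=1$ and applying $-\mathcal D_s$ yields the constant $2^{-2M-1}\prod_{j\le M}\gamma_j^{-2}$ with positive sign. Your check that feedback terms on that line require $j\le M+1-m\le M$ spells out what the paper states tersely. One small correction: the other terms on the lowest diagonal vanish because they would need $n<m$, not by parity, but this does not affect the argument.
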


\begin{proof}
On the lowest diagonal the recursion gives
$A_{s,m,m}=(-1)^m\big(2^m\prod_{j\le m}\gamma_j\big)^{-1}\mathcal D_s^m\rho_s$, so the first changed
coefficient is $A_{s,M+1,M+1}$. For $m\le M$ the first index that can change is $n=2M+2-m$; there
the $m{-}1$ term, every positive-order sensing correction, the diffusion term and every feedback
term in~\eqref{eq:recurrence} involve matched rates only, so the difference can enter through the
$m{+}1$ term alone, giving
$\Delta A_{s,m,2M+2-m}=-(2\gamma_m)^{-1}\mathcal D_s\,\Delta A_{s,m+1,2M+1-m}$. Iterating from
$m=M+1$ down to $m=1$,
\begin{equation}
 \Delta A_{s,1,2M+1}
 =-\frac{\Delta(\gamma_{M+1}^{-1})}{2^{2M+1}\prod_{j=1}^{M}\gamma_j^{2}}\,\mathcal D_s^{2M+1}\rho_s,
\end{equation}
and in~\eqref{eq:densityrec} only the zeroth-order speed multiplies this changed coefficient, so
applying $-\mathcal D_s$ gives~\eqref{eq:firstdiff}. Every interspecies dependence is retained
inside $v^{(0)}_s(\rho)$.
\end{proof}

\noindent The formula follows the shortest path by which a mismatched rate can reach the density
equation, up the angular ladder to $m=M+1$ and back down, so no higher rate appears. Linearising~\eqref{eq:firstdiff} about the uniform state returns the
first-difference coefficient of Proposition~\ref{prop:matrix}; specialising to one species at
constant speed returns Theorem~\ref{thm:sharp}.

\subsubsection{Two independent numerical verifications}

The checks below are computed independently of the symbolic derivation. Each field is carried as a truncated
power series in $\delta$ on a fixed $X$-grid, so that $\partial_x=\delta\partial_X$ puts the small
parameter explicitly in the coefficients; the $\tanh$ speed law, the matrix $\eta$, the sensing
corrections~\eqref{eq:v2} and the slow-time feedback of~\eqref{eq:recurrence} are all retained, the
last by complex-step differentiation iterated to self-consistency. The coefficients are read
directly, with no time integration and no fixed-point iteration.

\begin{table}[htbp]\centering
\caption{\textbf{Relative difference of the two closures, coefficient by coefficient in $\delta$, for
kernel pairs matching $\gamma_1,\dots,\gamma_M$ exactly.} Entries shown as $0$ are exactly zero:
$\gamma_{M+1}$ does not enter the arithmetic that produces those orders.}\label{tab:nlblind}
\footnotesize
\begin{tabular}{lrrrrrcc}
\toprule
$M$ & $\delta^2$ & $\delta^4$ & $\delta^6$ & $\delta^8$ & $\delta^{10}$ & first seen & predicted\\
\midrule
$1$ & $0$ & $6.3\times10^{-1}$ & $7.6\times10^{-1}$ & $9.1\times10^{-1}$ & $9.6\times10^{-1}$ & $\delta^{4}$ & $\delta^{4}$\\
$2$ & $0$ & $0$ & $3.1\times10^{-1}$ & $4.4\times10^{-1}$ & $7.8\times10^{-1}$ & $\delta^{6}$ & $\delta^{6}$\\
$3$ & $0$ & $0$ & $0$ & $6.0\times10^{-2}$ & $2.1\times10^{-1}$ & $\delta^{8}$ & $\delta^{8}$\\
$4$ & $0$ & $0$ & $0$ & $0$ & $1.3\times10^{-2}$ & $\delta^{10}$ & $\delta^{10}$\\
\bottomrule
\end{tabular}
\end{table}

\noindent The $M=2$ row was repeated over $24$ further settings --- kernel contrast
$\epsilon\in\{0.3,0.9\}$, grid $N_X\in\{96,128,192\}$, profile amplitude $\{0.05,0.10\}$ and base
composition $\{(0.75,1.25),(1,1)\}$ --- with the same verdict in every case and identical coefficients, to rounding error, across $N_X$. Retaining the slow-time feedback carries the fourth-order coefficient from $49.62$ to
$57.17$ at the reference setting and leaves the difference between the two kernels unchanged, as
the shortest-path argument requires.

\begin{table}[htbp]\centering
\caption{\textbf{Direct test of the closed form~\eqref{eq:firstdiff}.} Listed is the relative residual
$\|\Delta\mathcal F_{2M+2}-\text{Eq.}~\eqref{eq:firstdiff}\|/\|\Delta\mathcal F_{2M+2}\|$ against
grid size, together with the projection ratio
$\langle\Delta\mathcal F,\text{pred}\rangle/\langle\text{pred},\text{pred}\rangle$ at the coarsest
grid. Evaluating $\mathcal D_s^{2M+2}$ amplifies grid-scale round-off by $k_{\max}^{2M+2}$, which
sets the residual level at each grid.}\label{tab:firstdiff}
\footnotesize
\begin{tabular}{lrrrrrrc}
\toprule
$M$ & $N_X{=}24$ & $32$ & $48$ & $64$ & $96$ & $128$ & ratio at $N_X{=}24$\\
\midrule
$1$ & $4.5\times10^{-14}$ & $2.3\times10^{-13}$ & $8.6\times10^{-13}$ & $1.1\times10^{-12}$ & $6.9\times10^{-12}$ & $8.2\times10^{-12}$ & $1.000000000$\\
$2$ & $1.3\times10^{-12}$ & $1.1\times10^{-11}$ & $1.1\times10^{-10}$ & $2.3\times10^{-10}$ & $3.6\times10^{-9}$ & $6.3\times10^{-9}$ & $1.000000000$\\
$3$ & $1.9\times10^{-11}$ & $3.2\times10^{-10}$ & $7.2\times10^{-9}$ & $2.6\times10^{-8}$ & $1.0\times10^{-6}$ & $2.8\times10^{-6}$ & $1.000000000$\\
$4$ & $1.7\times10^{-10}$ & $5.1\times10^{-9}$ & $2.7\times10^{-7}$ & $1.7\times10^{-6}$ & $1.6\times10^{-4}$ & $6.4\times10^{-4}$ & $1.000000000$\\
\bottomrule
\end{tabular}
\end{table}

\noindent From $N_X=128$ to $N_X=24$ the residual falls by factors $1.8\times10^2$,
$4.8\times10^3$, $1.4\times10^5$ and $3.8\times10^6$ for $M=1,\dots,4$, to be compared with the
round-off amplification ratios $(64/12)^{2M+2}$, namely $8.1\times10^2$, $2.3\times10^4$,
$6.5\times10^5$ and $1.9\times10^7$. The residual tracks the round-off amplification of the repeated
spectral differentiation.

Finally, the coefficient identities say nothing on their own about accuracy at a finite $\delta$.
Freezing a smooth long-wave profile and solving the slaved ladder \emph{exactly} as a dense linear
system --- no iteration, no time integration --- gives the kinetic tendency against which the
truncated closure can be measured directly.

\begin{table}[htbp]\centering
\caption{\textbf{Accuracy of the truncated closure at finite $\delta$, on the frozen profile, against the
exactly solved ladder.} The small parameter is $v\delta K/2\gamma\simeq8\delta$.}\label{tab:trunc}
\footnotesize
\begin{tabular}{rrrrr}
\toprule
$\delta$ & $8\delta$ & $\|T-\delta^2\mathcal F_2\|/\|T\|$ & $\|T-\delta^2\mathcal F_2-\delta^4\mathcal F_4\|/\|T\|$ & $\|T^{+}-T^{-}\|/\|T\|$\\
\midrule
$1.25\times10^{-2}$ & $0.100$ & $1.317\times10^{-2}$ & $1.313\times10^{-3}$ & $4.486\times10^{-4}$\\
$6.25\times10^{-3}$ & $0.050$ & $3.466\times10^{-3}$ & $9.449\times10^{-5}$ & $3.607\times10^{-5}$\\
$3.13\times10^{-3}$ & $0.025$ & $8.793\times10^{-4}$ & $6.167\times10^{-6}$ & $2.435\times10^{-6}$\\
$1.56\times10^{-3}$ & $0.013$ & $2.207\times10^{-4}$ & $3.899\times10^{-7}$ & $1.554\times10^{-7}$\\
$7.81\times10^{-4}$ & $0.006$ & $5.522\times10^{-5}$ & $2.444\times10^{-8}$ & $9.763\times10^{-9}$\\
$3.91\times10^{-4}$ & $0.003$ & $1.381\times10^{-5}$ & $1.558\times10^{-9}$ & $6.110\times10^{-10}$\\
\midrule
\multicolumn{2}{l}{finest fitted slope} & $1.9996$ & $3.9713$ & $3.9981$\\
\multicolumn{2}{l}{predicted} & $2$ & $4$ & $4$\\
\bottomrule
\end{tabular}
\end{table}

\noindent The first two columns measure the closure against the kinetic tendency itself, the third
measures the two kinetic models against each other, and all three follow the predicted order. These
are statements about the frozen slaved manifold; the self-organised interface of the following
sections is not an expansion in $\delta$.

\suppnote{Interfacial sensitivity}

\noindent The sensitivity of the band's leading eigenvalue to the angular rate $\gamma_m$ is

\begin{equation}\label{eq:kato}
 W_m=\frac{d\Real\lambda}{d\gamma_m}
 =\Real\frac{\langle l,\partial_{\gamma_m}L\,r\rangle+\langle l,(D_FL[F_m]+c_m\partial_cL)r\rangle}{\langle l,r\rangle},
\end{equation}

\subsection{Derivation of Eq.~\eqref{eq:kato}}
\label{app:kato}

\subsubsection{The band and its tangents}
In the comoving frame $\xi=x-ct$ a travelling band solves
\begin{equation}\label{eq:band}
 G(F,c;\gamma):=c\,\partial_\xi F+\mathcal N[F;\gamma]=0
\end{equation}
on one Bloch cell, together with the species mass constraints $\Phi_s(F)=\int F_s=\mu_s$ and a
phase condition $\vartheta(F-F_0)=0$ removing the translation zero mode. The turning operator is
diagonal in angular harmonics, so
\begin{equation}\label{eq:dG}
 \partial_{\gamma_m}G=-P_mF,
\end{equation}
where $P_m$ projects onto $\{\cos m\varphi,\sin m\varphi\}$.

Three scalar constraints are imposed but only the wave speed is added as an unknown, so the
naively bordered Jacobian is not square. Restore squareness by adjoining two constant mass-source
columns $e_s$ normalised by $\Phi_i(e_s)=\delta_{is}$ and the corresponding multipliers, i.e.\ solve
\begin{equation}\label{eq:waveaug}
 \mathcal B(Y;\gamma)=\big(\,G(F,c;\gamma)+b_1e_1+b_2e_2,\ \ \vartheta(F-F_0),\ \
 \Phi_1(F)-\mu_1,\ \ \Phi_2(F)-\mu_2\,\big)=0,
\end{equation}
with $Y=(F,c,b_1,b_2)$. Mass conservation forces $b_1=b_2=0$ at every solution, so the columns
leave the solutions unchanged and serve only to make the linear system square. The derivative is
\begin{equation}\label{eq:border}
 J_b=\begin{pmatrix} \partial_FG & \partial_cG & e_1 & e_2\\
 \vartheta & 0 & 0 & 0\\ \Phi_1 & 0 & 0 & 0\\ \Phi_2 & 0 & 0 & 0\end{pmatrix}.
\end{equation}
Call the band \emph{nondegenerate} when $J_b$ is boundedly invertible at $Y$; the dense Newton
solve returns an unfiltered residual $\le7\times10^{-15}$. Equation~\eqref{eq:Hcert} below puts this
hypothesis in quantitative form.

Differentiating~\eqref{eq:waveaug} with respect to $\gamma_m$ gives the \emph{band tangent}
\begin{equation}\label{eq:tangent}
 Y_m=(F_m,c_m,0,0)=J_b^{-1}\big(P_mF,\,0,\,0,\,0\big),
\end{equation}
the shape and speed response of the band itself to a change of the $m$-th angular rate.

\subsubsection{Eigenvalue perturbation}
Let $L=L(F,c;\gamma)$ be the linearisation of~\eqref{eq:band} about the band in the comoving
frame, restricted to the Bloch sector of interest, and let $\lambda$ be an isolated eigenvalue of
algebraic multiplicity one with right and left eigenvectors $r,l$: $Lr=\lambda r$,
$L^{\dagger}l=\bar\lambda l$, $\langle l,r\rangle\ne0$. For a simple isolated eigenvalue of an
analytic family, Kato's first-order perturbation formula gives
$\dot\lambda=\langle l,\dot Lr\rangle/\langle l,r\rangle$ for any admissible variation $\dot L$.
Here $\gamma_m$ acts on $L$ both explicitly and through the band, which is itself a function of
$\gamma$; by the chain rule the admissible variation is the \emph{total} derivative
\begin{equation}\label{eq:total}
 \frac{dL}{d\gamma_m}=\underbrace{\partial_{\gamma_m}L}_{=\,-P_m}
 +\underbrace{D_FL[F_m]+c_m\,\partial_cL}_{\text{band feedback}},\qquad \partial_cL=\partial_\xi ,
\end{equation}
with $(F_m,c_m)$ from~\eqref{eq:tangent}. Substituting~\eqref{eq:total} into Kato's formula and
taking real parts yields Eq.~\eqref{eq:kato}, and identifies its two pieces: the
\emph{direct} term $\Real\langle l,-P_mr\rangle/\langle l,r\rangle$, the angular damping felt by
the eigenmode at fixed band; and the \emph{morphology} term, the response of $\Real\lambda$ to the
deformation of the band that the same rate change induces. The numbers quoted in the main text for
$m=2$ ($+0.46$ direct, $-11.0$ morphology) are the two terms of this decomposition.

\subsubsection{Pullback to kernel perturbations}
The rates are determined by the turning kernel through $\gamma_m=\alpha(1-\hat q_m)$. A
perturbation $\delta q=q_0u$ that keeps $q$ a probability density therefore produces
\begin{equation}
 \delta\gamma_m=-\alpha\,\delta\hat q_m=-\alpha\int\cos(m\varphi)\,q_0(\varphi)u(\varphi)\,d\varphi ,
 \qquad m\ge1,
\end{equation}
with $\delta\gamma_0=0$ automatically. The representing density is constructed directly from the
operator, the dual of $L^\infty$ being larger than $L^1$.

\begin{proposition}[Represented first-order response]\label{prop:response}
The turning kernel enters the wave equation affinely through the angular shift operators,
$\mathcal N_h[F]=\int T_\varphi F\,h(\varphi)\,d\mu$. Assuming the nondegeneracy of
Eq.~\eqref{eq:border} and a simple isolated target eigenvalue, differentiating~\eqref{eq:waveaug}
gives $Y'[h]=\int Y_\varphi\,h\,d\mu$ and differentiating the normalised eigenpair gives
\begin{equation}\label{eq:DR}
 DR[q_0u]=\int q_0\,u\,\Psi,\qquad u\in\mathcal T_M ,
\end{equation}
with a \emph{bounded and continuous} representative $\Psi$. Strong continuity of the rotations on
the state and domain spaces, boundedness of $J_b^{-1}$ and compactness of the angular circle give
the continuity. Adding any element of $V_M$ to $\Psi$ leaves the response on the matched class
unchanged, so the object the blind-spot norms measure is a quotient-space element.
\end{proposition}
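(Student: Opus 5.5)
The plan is to treat the kernel $h$ as the parameter of two implicit equations: the bordered band equation~\eqref{eq:waveaug}, and the eigenpair equation for $L$ completed by a normalisation. I would differentiate both by the implicit function theorem and then identify the resulting linear functional of $h$ with integration against a bounded continuous density.

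The first step is to write the turning operator as $\mathcal N_h[F]=\alpha\int (T_\varphi F-F)\,h(\varphi)\,d\mu(\varphi)$, with $T_\varphi$ the angular rotation. Since this is affine in $h$, the derivative of $\mathcal B$ with respect to $h$ in direction $\delta h$ is $(\int (T_\varphi F-F)\,\delta h\,d\mu,0,0,0)$. The nondegeneracy hypothesis makes $J_b$ boundedly invertible, so the implicit function theorem gives
\[
Y'[\delta h]=-J_b^{-1}\Big(\int (T_\varphi F-F)\,\delta h\,d\mu,\,0,\,0,\,0\Big).
\]
Pulling the bounded operator $J_b^{-1}$ inside the Bochner integral yields $Y'[\delta h]=\int Y_\varphi\,\delta h\,d\mu$ with $Y_\varphi:=-J_b^{-1}(T_\varphi F-F,0,0,0)$. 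The map $\varphi\mapsto T_\varphi F$ is continuous because rotations are strongly continuous on the state space. Composing with the bounded $J_b^{-1}$ then makes $\varphi\mapsto Y_\varphi$ continuous into the state space, and hence bounded on the compact circle.

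The second step treats the eigenpair. For the simple isolated eigenvalue I would append the normalisation $\langle l_0,r\rangle=1$; the bordered operator $\begin{pmatrix}L-\lambda & -r\\ \langle l_0,\cdot\rangle & 0\end{pmatrix}$ is then invertible, which is the standard consequence of simplicity. The total variation $\dot L$ has two parts: the explicit part $\int(T_\varphi-I)\,\delta h\,d\mu$, and the band feedback $D_FL[F'[\delta h]]+c'[\delta h]\,\partial_\xi$ as in~\eqref{eq:total}. Both are linear in $\delta h$ and of the form $\int \dot L_\varphi\,\delta h\,d\mu$, with $\dot L_\varphi$ depending continuously on $\varphi$. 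Continuity of the feedback part requires $D_FL$ to be bounded from the state space to operators on the domain space, which is where strong continuity on the domain space enters. Kato's formula then gives $\dot\lambda=\int \Psi_0(\varphi)\,\delta h\,d\mu$ with
\[
\Psi_0(\varphi)=\Real\frac{\langle l,\dot L_\varphi r\rangle}{\langle l,r\rangle}.
\]
This density is continuous in $\varphi$, hence bounded. Writing $\delta h=q_0u$ and absorbing the density of $\mu$ relative to $d\varphi$ gives~\eqref{eq:DR} with $\Psi=\Psi_0$, for every $u$, and in particular on $\mathcal T_M$. The point of this construction is that $\Psi$ is built directly as a function and never extracted from the dual of $L^\infty$. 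That is why the proposition can assert a bounded and continuous representative rather than a finitely additive measure.

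The final step is the quotient statement. Elements $u\in\mathcal T_M$ satisfy $\int q_0\,u\,C_k=0$ for each matched moment $C_k$ spanning $V_M$. Consequently $\int q_0\,u\,(\Psi+a\cdot C)=\int q_0\,u\,\Psi$, so only the class of $\Psi$ modulo $V_M$ is measured.

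I expect the main obstacle to be the regularity needed for the feedback term. $D_FL[F_\varphi]$ must be a bounded perturbation relative to $L$, and $F_\varphi$ must lie in a space on which $D_FL$ acts boundedly. The transport term $\partial_\xi$ is unbounded, so $c_\varphi\,\partial_\xi$ should be treated as relatively bounded, using $r$ in the domain. I would first try to work in the graph norm of $L$, where strong continuity of rotations on the domain closes the argument.
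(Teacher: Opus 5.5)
Your proposal is correct and follows essentially the paper's own route: implicit differentiation of the bordered band system~\eqref{eq:waveaug} with $J_b^{-1}$ bounded to obtain $Y_\varphi$, then Kato's formula with the total variation (explicit rotation term plus the band feedback of~\eqref{eq:total}), then continuity and hence boundedness of $\Psi$ from strong continuity of the rotations and compactness of the circle, and finally the $V_M$-quotient because matched directions annihilate the moments. The obstacle you anticipate for the transport part is milder than you fear: $\langle l,c_\varphi\partial_\xi r\rangle=c_\varphi\langle l,\partial_\xi r\rangle$ is a fixed number times the continuous scalar $c_\varphi$, so only the $D_FL[F_\varphi]$ term needs a genuine mapping-property assumption.
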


\noindent In the harmonic notation, modulo that irrelevant element of $V_M$,
\begin{equation}\label{eq:Psiharm}
 \Psi(\varphi)=-\alpha\sum_{m\ge1}W_m\cos(m\varphi),\qquad
 W_m=\Real\,\langle l,[-P_m+D_FL[F_m]+c_m\partial_\xi]r\rangle/\langle l,r\rangle ,
\end{equation}
the series being read first in $L^2$, where boundedness of $q_0$ transfers unweighted Fourier
convergence to $L^2(q_0)$. Varying individual rates is a linear extension of the legal class. This
is the representation used in Proposition~\ref{prop:bm}. For the base point of the main text,
$\gamma_1=0.1$ and $\kappa=5$ give
$\alpha=\gamma_1/(1+I_1/I_0)=0.05282$, and $\Psi$ is evaluated from the computed $W_1,\dots,W_8$ of Supplementary Table~\ref{tab:W}.

\suppnote{Prediction limits, thresholds and conditional separation}

\begin{proposition}[Blind-spot norms]\label{prop:bm}
$\sup\{DR[q_0u]:\int q_0uC=0,\ \|u\|_{L^2(q_0)}\le1\}=B_M:=\|(I-\Pi_M)\Psi\|_{L^2(q_0)}$, and
$\sup\{DR[q_0u]:\int q_0uC=0,\ \|u\|_\infty\le1\}=D_M:=\inf_a\int q_0|\Psi-a\cdot C|$.
Both vanish iff $\Psi$ lies in the span of the matched cosines; $D_M\le B_M$ for normalised $q_0$; both are non-increasing in $M$ and $B_M\to0$.
\end{proposition}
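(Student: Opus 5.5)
Both suprema are maximizations of the linear functional $u\mapsto DR[q_0u]=\int q_0\Psi u$ (Proposition~\ref{prop:response}) over the closed subspace $\mathcal T_M=\{u:\int q_0uC=0\}$ intersected with a unit ball. The plan is to treat each by duality in the weighted space. The constraint set is the annihilator of $V_M=\operatorname{span}\{1,\cos\varphi,\dots,\cos M\varphi\}$ under the pairing $\langle f,g\rangle=\int q_0fg$. Since $q_0$ is bounded and positive, $\Psi$ and $C$ lie in $L^2(q_0)\subset L^1(q_0)$, so all quantities below are finite.

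For the $L^2$ statement, let $\Pi_M$ be the $L^2(q_0)$-orthogonal projection onto $V_M$. Then $\mathcal T_M\cap L^2(q_0)=V_M^\perp$. For $u\in V_M^\perp$ we have $\int q_0\Psi u=\langle(I-\Pi_M)\Psi,u\rangle$, and Cauchy--Schwarz bounds this by $B_M\|u\|$. Equality holds for $u=(I-\Pi_M)\Psi/B_M$ when $B_M>0$, which gives the first identity.

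For the $L^\infty$ statement, view $\mathcal T_M\cap L^\infty$ as the annihilator of the finite-dimensional space $q_0V_M\subset L^1$. The plan is to apply the standard distance formula: for $g\in L^1$ and a finite-dimensional subspace $W\subset L^1$,
\[
\sup\Big\{\int gu:\ \|u\|_\infty\le1,\ u\perp W\Big\}=\operatorname{dist}_{L^1}(g,W).
\]
This is Hahn--Banach duality $(L^1/W)^*=W^\perp\subset L^\infty$, and the infimum over $W$ is attained because $W$ is finite-dimensional. Taking $g=q_0\Psi$ and $W=q_0V_M$ gives $\operatorname{dist}=\inf_a\int q_0|\Psi-a\cdot C|=D_M$. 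The easy direction, $\int q_0\Psi u=\int q_0(\Psi-a\cdot C)u\le\int q_0|\Psi-a\cdot C|$, already shows ``$\le$''. For ``$\ge$'', one could alternatively construct the extremizer $u=\operatorname{sign}(\Psi-a^*\cdot C)$ at the best approximant $a^*$. That route, however, requires showing $u\perp q_0V_M$, which holds only when the zero set of $\Psi-a^*\cdot C$ has measure zero; otherwise a Chebyshev-type adjustment on that set is needed. I therefore expect the duality theorem to be the cleaner route, and this step is the main technical point.

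The remaining claims follow directly.
\begin{itemize}
\item \emph{Vanishing.} $B_M=0$ iff $\Psi=\Pi_M\Psi\in V_M$. Similarly $D_M=0$ iff some $a$ makes $\Psi=a\cdot C$ $q_0$-a.e.; here the infimum is attained, and $q_0>0$ a.e.\ is needed.
\item \emph{$D_M\le B_M$.} Take $a$ with $a\cdot C=\Pi_M\Psi$. Cauchy--Schwarz with $\int q_0=1$ gives $\int q_0|(I-\Pi_M)\Psi|\le\|(I-\Pi_M)\Psi\|_{L^2(q_0)}$.
\item \emph{Monotonicity.} Increasing $M$ shrinks the feasible set of the supremum, and equivalently enlarges the approximating subspace, so both quantities are non-increasing in $M$.
\item \emph{$B_M\to0$.} The cosines span a dense subspace of the even functions in $L^2(q_0)$: unweighted Fourier density in $L^2$ transfers to $L^2(q_0)$ because $q_0$ is bounded. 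Hence $\|(I-\Pi_M)\Psi\|\to0$ for the even function $\Psi$ of \eqref{eq:Psiharm}.
\end{itemize}
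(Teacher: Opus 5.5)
Your proposal is correct and follows the paper's proof essentially step by step. For the $L^2$ budget you use Cauchy--Schwarz with the extremiser $(I-\Pi_M)\Psi/B_M$. For the pointwise budget you use the Hahn--Banach distance-to-a-subspace formula; your unweighted pairing with $g=q_0\Psi$ and $W=q_0V_M$ is just the isometric image of the paper's $L^1(q_0)$ formulation, and you note the same caveat about the sign extremiser when $\{\Psi=a^\star\cdot C\}$ has positive measure. The remaining claims are argued as in the paper: $D_M\le B_M$ by Cauchy--Schwarz with $\int q_0=1$, vanishing via closedness of the finite-dimensional $V_M$, monotonicity from nesting, and $B_M\to0$ from Fourier density transferred to the bounded weight.
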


\begin{proposition}[Prediction interval and worst-case error]\label{prop:interval}
For any matched class $\mathcal A$, a predictor restricted to the bulk data common to that class
returns a single number on the whole class, so its best worst-case error is exactly
\begin{equation}\label{eq:exactminimax}
 E(\mathcal A)=\tfrac12\Big(\sup_{\mathcal A}R-\inf_{\mathcal A}R\Big),
\end{equation}
an identity for the full nonlinear output that requires no differentiability. The quantity $D_M$
below is the first-order coefficient of its small-budget expansion. Write $r_\varepsilon=\tfrac12H\varepsilon^2$. Then
\begin{equation}\label{eq:interval}
 \Big|\sup_{\mathcal A_{M,\varepsilon}}R-\big(R_0+\varepsilon D_M\big)\Big|\le r_\varepsilon,
 \qquad
 \Big|\inf_{\mathcal A_{M,\varepsilon}}R-\big(R_0-\varepsilon D_M\big)\Big|\le r_\varepsilon,
\end{equation}
so the spread of interfacial growth rates compatible with the same bulk data is
$2\varepsilon D_M+O(\varepsilon^2)$. Consequently every predictor $\widehat R$ that reads only
$\gamma_1,\dots,\gamma_M$ obeys
\begin{equation}\label{eq:minimax}
 \inf_{\widehat R}\ \sup_{q\in\mathcal A_{M,\varepsilon}}\big|R(q)-\widehat R\big|
 =\varepsilon D_M+O(\varepsilon^2),
\end{equation}
the optimum being attained by predicting the midpoint of the interval.
\end{proposition}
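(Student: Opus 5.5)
The argument runs in three steps: the exact minimax identity, a second-order Taylor bound on the matched class, and the combination of the two. The class is $\mathcal A_{M,\varepsilon}=\{q_0(1+h):\int q_0hC=0,\ \|h\|_\infty\le\varepsilon\}$. $H$ is understood as a uniform bound on the second derivative of $R$ along segments in this class, i.e. $|R(q_0(1+h))-R_0-DR[q_0h]|\le\tfrac12H\|h\|_\infty^2$. The budget $\varepsilon$ is assumed small enough that every member stays in the neighbourhood where the band is nondegenerate and the target eigenvalue remains simple, so that Proposition~\ref{prop:response} applies along each segment.

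\textbf{Step 1: the exact minimax identity \eqref{eq:exactminimax}.} Every $q$ in the class has the same $\gamma_1,\dots,\gamma_M$, so any predictor that reads only these data returns a single constant $c$ on the class. The worst-case error is then $\sup_{\mathcal A}|R-c|=\max(\sup R-c,\ c-\inf R)$. This is at least $\tfrac12(\sup R-\inf R)$ for every $c$, with equality at the midpoint. No differentiability is used in this step.

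\textbf{Step 2: the interval bounds \eqref{eq:interval}.} Write $h=\varepsilon u$ with $\|u\|_\infty\le1$ and $\int q_0uC=0$. By Proposition~\ref{prop:response}, $DR[q_0h]=\varepsilon\int q_0\Psi u$, and the Taylor bound gives $|R-R_0-\varepsilon\int q_0\Psi u|\le r_\varepsilon$. Taking the supremum over admissible $u$ and using Proposition~\ref{prop:bm} for $\sup_u\int q_0\Psi u=D_M$ gives $\sup R\le R_0+\varepsilon D_M+r_\varepsilon$. For the reverse inequality, take a near-maximising $u$ and let the slack tend to zero, which gives $\sup R\ge R_0+\varepsilon D_M-r_\varepsilon$. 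The constraint set is symmetric under $u\mapsto-u$, so the infimum of the linear term is $-D_M$ and the bound for $\inf R$ follows in the same way.

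\textbf{Step 3: the worst-case error \eqref{eq:minimax}.} Subtracting the two bounds of \eqref{eq:interval} gives $\sup R-\inf R=2\varepsilon D_M+O(\varepsilon^2)$ with error at most $2r_\varepsilon$. Inserting this into Step 1 yields $\varepsilon D_M+O(\varepsilon^2)$, attained by the midpoint predictor.

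\textbf{Main obstacle.} The delicate point is justifying the uniform second-order remainder with constant $H$ over the whole $L^\infty$ ball. This needs $R$ to be $C^{1,1}$, or $C^2$, as a map of $h\in L^\infty$ near zero. I would obtain this from the implicit function theorem applied to the bordered system \eqref{eq:waveaug}, whose Jacobian $J_b$ is boundedly invertible, together with analytic dependence of a simple isolated eigenvalue on the operator. The kernel enters $\mathcal N$ affinely through bounded angular shift operators, so $h\mapsto(L,\text{band})$ is smooth in operator norm. $H$ is then the supremum of $\|D^2R\|$ on the ball, which is finite for small $\varepsilon$ by continuity of the second derivative.
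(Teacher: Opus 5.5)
Your proposal is correct and follows the paper's proof essentially step for step: Taylor's theorem with the uniform second-derivative bound $H$, the constrained $L^\infty$--$L^1$ duality of Proposition~\ref{prop:bm}(b) for the supremum and infimum of the linear term, and the half-diameter argument showing that a constant predictor is optimal at the midpoint. The differences are minor. You use a near-maximising direction where the paper evaluates at a maximiser, and you sketch why a finite $H$ exists on a small ball, whereas the paper takes this as a hypothesis and discusses it only in its later remark on the uniform remainder.
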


\begin{corollary}[Decidable and undecidable regimes]\label{cor:decide}
If $|R_0|>\varepsilon D_M+r_\varepsilon$ the interfacial growth rate keeps one sign across the
whole matched class, and bulk data of order $M$ decide the verdict. If
$\varepsilon D_M>|R_0|+r_\varepsilon$ the class contains legal kernels of both signs, and no
predictor reading only those rates can be uniformly correct.
\end{corollary}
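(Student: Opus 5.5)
The corollary follows from the interval bounds of Proposition~\ref{prop:interval}, so the proof consists of reading off two sign conditions from \eqref{eq:interval}; no new analysis of the band is needed. Throughout we work on the matched class $\mathcal A_{M,\varepsilon}$ of legal kernels $q_0(1+h)$ with $\|h\|_\infty\le\varepsilon$ sharing $\gamma_1,\dots,\gamma_M$ with $q_0$. We take $r_\varepsilon=\tfrac12H\varepsilon^2$ to be the remainder constant of that proposition, with $H$ bounding the second derivative of $R$ along the admissible class. The two regimes are handled separately.

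\emph{First regime.} Suppose $|R_0|>\varepsilon D_M+r_\varepsilon$, and say $R_0>0$; the case $R_0<0$ is symmetric under $R\mapsto-R$. The lower bound in \eqref{eq:interval} gives
\[
\inf_{\mathcal A_{M,\varepsilon}}R\ \ge\ R_0-\varepsilon D_M-r_\varepsilon\ >\ 0 .
\]
So $R>0$ on the whole class. Every kernel compatible with the first $M$ rates at this budget therefore has the same sign of interfacial growth rate. The trivial predictor $\operatorname{sign}R_0$, which reads only the matched rates, is then uniformly correct.

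\emph{Second regime.} Suppose $\varepsilon D_M>|R_0|+r_\varepsilon$. The upper bound in \eqref{eq:interval} gives
\[
\sup R\ \ge\ R_0+\varepsilon D_M-r_\varepsilon\ >\ R_0+|R_0|\ \ge\ 0,
\]
and the lower bound gives $\inf R\le R_0-\varepsilon D_M+r_\varepsilon<R_0-|R_0|\le0$. A strict inequality on a supremum yields an actual element exceeding $0$, and likewise for the infimum. Hence $\mathcal A_{M,\varepsilon}$ contains kernels $q_+$ and $q_-$ with $R(q_+)>0>R(q_-)$. Both are legal, that is, positive kernels, as long as $\varepsilon<1$, which the budget convention guarantees.

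Any predictor that reads only $\gamma_1,\dots,\gamma_M$ returns the same verdict on $q_+$ and $q_-$, since their matched rates coincide. It is therefore wrong on at least one of them, so it cannot be uniformly correct. This is just the sign version of the single-value observation behind \eqref{eq:exactminimax}.

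The only delicate point is checking that \eqref{eq:interval} applies with a uniform $r_\varepsilon$ over the whole class, rather than only to first order. That uniformity is exactly what Proposition~\ref{prop:interval} asserts, so it can be invoked directly. No other obstacle is expected.
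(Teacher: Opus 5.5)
Your proposal is correct and follows essentially the paper's argument: the first regime is the uniform bound $|R-R_0|\le\varepsilon D_M+r_\varepsilon$ from the Taylor estimate, and the second regime uses the same estimate to exhibit matched kernels of both signs. The only difference is cosmetic. The paper evaluates $R$ directly at $h=\pm\varepsilon u^\star$, with $u^\star$ a maximiser of $D_M$. You instead extract the two witnesses from the strict inequalities on $\sup R$ and $\inf R$ in \eqref{eq:interval}, which avoids naming the maximiser but rests on the same bound.
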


\begin{corollary}[Threshold uncertainty band]\label{cor:threshold}
Let $p$ be a control parameter crossing the interfacial threshold transversally, with
$R(q_0,p_c)=0$ and $a_p=\partial_pR(q_0,p_c)\ne0$. Then the critical parameter values compatible
with the same bulk data span a band of width $2\varepsilon D_M/|a_p|+O(\varepsilon^2)$.
\end{corollary}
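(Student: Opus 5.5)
The plan is to apply the implicit function theorem to $G(p,q)=R(q,p)$ near $(p_c,q_0)$, and then convert the sup/inf bounds of Proposition~\ref{prop:interval} into bounds on the critical parameter.

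\emph{Step 1: the critical surface.} Since $a_p=\partial_pR(q_0,p_c)\ne0$, there is a unique $C^2$ function $p_c(q)$ with $R(q,p_c(q))=0$ for all $q$ in a neighbourhood of $q_0$. This holds in particular on the matched class $\mathcal A_{M,\varepsilon}$ once $\varepsilon$ is small. Differentiating the identity gives $Dp_c[q_0u]=-DR[q_0u]/a_p$. A second-order Taylor remainder, uniform in $\|u\|_\infty\le1$, then yields
\[
p_c(q_0(1+\varepsilon u))=p_c-\varepsilon\,DR[q_0u]/a_p+O(\varepsilon^2).
\]
For this remainder to be uniform I assume the same $C^2$ bound $H$ on $R$ that underlies $r_\varepsilon$ in Proposition~\ref{prop:interval}, now jointly in $(q,p)$.

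\emph{Step 2: the band of critical values.} The set of critical values compatible with the bulk data is the image $\{p_c(q):q\in\mathcal A_{M,\varepsilon}\}$. By Step~1, its supremum and infimum are $p_c\pm(\varepsilon/|a_p|)\sup_u DR[q_0u]+O(\varepsilon^2)$. Here the supremum runs over $u$ with $\int q_0uC=0$ and $\|u\|_\infty\le1$, and the constraint set is symmetric under $u\mapsto-u$. By Proposition~\ref{prop:bm} this supremum equals $D_M$, so the band has width $2\varepsilon D_M/|a_p|+O(\varepsilon^2)$.

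An alternative route applies Proposition~\ref{prop:interval} at the shifted parameter values $p=p_c\pm\delta$. There $R_0=\pm a_p\delta+O(\delta^2)$, and Corollary~\ref{cor:decide} separates two cases. If $|a_p|\delta>\varepsilon D_M+O(\varepsilon^2)$, the sign is fixed on the whole class and no critical point lies there. If $|a_p|\delta<\varepsilon D_M-O(\varepsilon^2)$, both signs occur, and by continuity along a segment of the convex class there is a critical kernel. This gives the band edges directly, and I would use it as a cross-check on Step~2.

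\emph{Main obstacle.} The delicate point is uniformity: the $O(\varepsilon^2)$ remainders must hold simultaneously over the whole class. That class is an $L^\infty$-ball intersected with the matching constraints, so it is infinite-dimensional. I would handle this by assuming $R$ is $C^2$ in the Fréchet sense on $L^\infty$-perturbations, which is the same hypothesis Proposition~\ref{prop:interval} uses. With that assumption, $p_c(\cdot)$ inherits a uniform second-derivative bound of order $H(1+1/|a_p|)^2/|a_p|$ via the implicit-function formula.

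A second point is that the supremum in Proposition~\ref{prop:bm} need not be attained by an $L^\infty$ function. The extremiser is $u=\operatorname{sign}(\Psi-a^\ast\cdot C)$, and its existence needs care. This only affects attainment, not the value of the supremum, so the width formula still holds.
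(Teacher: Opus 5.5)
Your proposal is correct and is essentially the paper's proof: the implicit function theorem gives $p_c(q_0(1+h))=p_c-\int q_0\Psi h/a_p+O(\|h\|_\infty^2)$, and by Proposition~\ref{prop:bm} the supremum and infimum of the linear term over the matched $L^\infty$ ball are $\pm\varepsilon D_M/|a_p|$. Your explicit joint $C^2$ hypothesis is what the paper's $O(\|h\|_\infty^2)$ remainder tacitly needs, since the paper states only $C^1$. Your concern about attainment is unnecessary: the matched unit ball is weak-$*$ compact and $u\mapsto\int q_0\Psi u$ is weak-$*$ continuous, so the maximum is attained even when the maximiser is not of the explicit sign form.
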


\begin{theorem}[Conditional separation]\label{thm:sep}
Let a nondegenerate travelling band exist at the base kernel with an isolated simple interfacial Hopf pair at criticality, smooth in the kernel. If $B_M>0$, then for all small $\varepsilon>0$ the exponentially tilted pair $q_{\pm\varepsilon}$ along the maximiser is strictly positive, matches $\gamma_1,\dots,\gamma_M$ exactly, has identical bulk density dispersion through $k^{2M}$, and has interfacial growth rates of opposite sign at the same macroscopic parameters.
\end{theorem}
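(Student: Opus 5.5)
The plan is to build the pair explicitly from the $L^2(q_0)$ maximiser of Proposition~\ref{prop:bm} and to read the sign of the growth rate off a first-order expansion. Because $B_M>0$, the residual $\psi:=(I-\Pi_M)\Psi$ is a nonzero element of $L^2(q_0)$, orthogonal to $C=(1,\cos\varphi,\dots,\cos M\varphi)$. By Proposition~\ref{prop:response}, $\Psi$ is bounded and continuous, and $\Pi_M\Psi$ is a finite combination of cosines, so $\psi$ is also bounded.

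For small $t$, define the exponentially tilted kernel
\[
q_t=q_0\,e^{t\psi+a(t)\cdot C}.
\]
Here $a(t)\in\mathbb R^{M+1}$ is chosen so that $\int q_t C=\int q_0C$. This makes the normalisation and $\gamma_1,\dots,\gamma_M$ match exactly.

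Strict positivity of $q_t$ is automatic from the exponential form. Existence of $a(t)$ follows from the implicit function theorem applied to
\[
G(t,a)=\int q_0\big(e^{t\psi+a\cdot C}-1\big)C .
\]
We have $G(0,0)=0$. The Jacobian $\partial_aG(0,0)=\int q_0CC^{\mathsf T}$ is the Gram matrix of $1,\cos\varphi,\dots,\cos M\varphi$ in $L^2(q_0)$, which is positive definite because $q_0>0$. Moreover $\partial_tG(0,0)=\int q_0\psi C=0$ by orthogonality, so $a(t)=O(t^2)$, and $a$ is smooth in $t$ since $\psi$ is bounded.

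Set $h_t:=e^{t\psi+a(t)\cdot C}-1$. Then $h_t=t\psi+O(t^2)$ in $L^\infty$, so $q_{\pm\varepsilon}$ lie in $\mathcal A_{M,\varepsilon'}$ with $\varepsilon'=\varepsilon\|\psi\|_\infty+O(\varepsilon^2)$. Identical bulk density dispersion through $k^{2M}$ then follows from Theorem~\ref{thm:bulk} and Proposition~\ref{prop:matrix}. The uniform gap hypothesis holds by Lemma~\ref{lem:gap}, perturbatively, since $q_t$ is a smooth positive perturbation of $q_0$.

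For the sign change, I use the hypotheses: a nondegenerate band and a simple isolated Hopf pair that depends smoothly on the kernel. The implicit function theorem on the bordered system \eqref{eq:waveaug} then continues the band and the eigenvalue along $t\mapsto q_t$. Since $R(q_0)=0$ at criticality, the chain rule and \eqref{eq:DR} give
\[
R(q_t)=t\,DR[q_0\psi]+O(t^2).
\]
The term $a(t)\cdot C$ does not contribute at first order, because $a(t)=O(t^2)$ and in any case its effect is invisible to the matched class. On the matched class $DR[q_0u]=\int q_0u\Psi$ only sees $\Psi$ modulo $V_M$. Hence
\[
DR[q_0\psi]=\int q_0\psi\Psi=\|\psi\|^2_{L^2(q_0)}=B_M^2>0 .
\]
Consequently $R(q_{+\varepsilon})>0>R(q_{-\varepsilon})$ for all sufficiently small $\varepsilon$, at the same macroscopic parameters. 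The maximiser of Proposition~\ref{prop:bm} is $\psi/B_M$, so the tilt direction agrees with the statement up to this rescaling.

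The main obstacle is making the second-order remainder uniform: $R(q_t)$ must be $C^2$, or at least differentiable with an $O(t^2)$ remainder, in $t$ when perturbations are measured in $L^\infty$. I would obtain this from analyticity of $t\mapsto h_t$ in $L^\infty$, together with the affine dependence of the wave operator on the kernel stated in Proposition~\ref{prop:response}. These combine with boundedness of $J_b^{-1}$ and Kato's analytic perturbation theory for the simple eigenvalue. This gives a $C^2$ (indeed analytic) composite, and hence the constant $H$ used in $r_\varepsilon$.
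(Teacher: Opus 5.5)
Your proposal is correct and follows the paper's proof. You tilt along the $L^2(q_0)$ maximiser $(I-\Pi_M)\Psi/B_M$ with the exponential family of Lemma~\ref{lem:tilt}, which you re-derive including $a'(0)=0$; exact matching then gives bulk equivalence through Theorem~\ref{thm:bulk}, and the opposite signs come from $\Real\lambda(q_{\pm\varepsilon})=\pm\varepsilon B_M+o(\varepsilon)$ (your unnormalised $\psi$ just makes the slope $B_M^2$). Two small remarks: the $O(t^2)$ remainder and $C^2$ control you flag as the main obstacle are not needed, since differentiability at $t=0$ under the $C^1$ hypothesis suffices along a one-parameter path; and your aside that $a(t)\cdot C$ is ``invisible to the matched class'' is not right as stated, because perturbations along $q_0\cos m\varphi$ with $m\le M$ generally do move $R$, although $a(t)=O(t^2)$ alone makes that step work.
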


\begin{proposition}[Two response coordinates]\label{prop:rank2}
$\lambda(q_h)=i\omega_0+\chi_R(h)+i\chi_I(h)+O(\|h\|_\infty^2)$. Predicting the growth rate to
first order needs only $\chi_R$. If the real linear map $h\mapsto(\chi_R(h),\chi_I(h))$ has rank
two on $\mathcal T_M$, then no single real first-order measurement suffices to reconstruct the
complex eigenvalue: at least two are needed, and the same derivative-rank bound applies to any
smooth measurement and smooth reconstruction near $q_0$.
\end{proposition}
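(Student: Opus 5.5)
The plan is to start from the first-order expansion of the simple isolated Hopf eigenvalue. By hypothesis $i\omega_0$ is a simple isolated eigenvalue of the band linearisation at $q_0$, and the band is nondegenerate. The band tangent $Y'[h]$ of Proposition~\ref{prop:response} and Kato's formula therefore give a Fréchet derivative
\[
D\lambda[q_0h]=\frac{\langle l,(dL/dq)[h]\,r\rangle}{\langle l,r\rangle},
\]
which is bounded in $\|h\|_\infty$. Taking real and imaginary parts defines the real-linear functionals $\chi_R,\chi_I$. The remainder is $O(\|h\|_\infty^2)$ because the band, and hence $L$, is $C^2$ in the kernel; this follows from the implicit function theorem applied to the augmented system~\eqref{eq:waveaug}, with $J_b$ boundedly invertible, together with analytic dependence of a simple eigenvalue on the operator. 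This establishes the displayed expansion. The statement that predicting the growth rate needs only $\chi_R$ is then immediate, since $\Real\lambda(q_h)=\chi_R(h)+O(\|h\|^2)$, consistent with~\eqref{eq:DR} where $\chi_R(h)=\int q_0h\Psi$.

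For the rank statement, the plan is to argue by linear algebra on $\mathcal T_M$. A single real first-order measurement is a real-linear functional $\mu$ on $\mathcal T_M$. Reconstructing the complex eigenvalue to first order means finding a linear map $\rho:\mathbb R\to\mathbb C\cong\mathbb R^2$ with $(\chi_R,\chi_I)=\rho\circ\mu$ on $\mathcal T_M$. The image of $\rho\circ\mu$ has dimension at most one, contradicting rank two. More generally, $k$ measurements reconstruct the eigenvalue only if $\operatorname{rank}(\chi_R,\chi_I)\le k$, so $k\ge2$.

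For the smooth version, let $m:\mathcal U\to\mathbb R^k$ be a smooth measurement on a neighbourhood of $q_0$ in the fibre, and let $\Lambda:\mathbb R^k\to\mathbb C$ be a smooth reconstruction with $\Lambda\circ m=\lambda$ near $q_0$. The chain rule gives $D\lambda(q_0)=D\Lambda(m(q_0))\circ Dm(q_0)$ on $\mathcal T_M$. Since rank is submultiplicative, $2=\operatorname{rank}D\lambda\le\operatorname{rank}Dm\le k$. The same bound holds if reconstruction is required only up to $o(\|h\|)$, because that still fixes the derivative.

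The main obstacle is the first step: making the Fréchet differentiability of $h\mapsto\lambda(q_h)$ in the $L^\infty$ topology rigorous, with a genuine quadratic remainder. Here I would lean on Proposition~\ref{prop:response}. The kernel enters affinely through bounded angular shift operators, so $q\mapsto\mathcal N_q$ is bounded-linear, hence smooth from $L^\infty$ into operators. Composing with the implicit function theorem for the band and Kato's analytic perturbation theory for the simple eigenvalue gives $C^2$ dependence, with $H$ bounding the second derivative on the $\varepsilon$-ball.
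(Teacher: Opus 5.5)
Your proposal is correct and follows the paper's own proof. In both, the expansion is Taylor's formula for the simple isolated eigenvalue: Kato's formula with the band feedback, with differentiability taken from the nondegeneracy and simplicity hypotheses. In both, necessity is the chain-rule bound on the rank of the composed derivative. The paper phrases this equivalently: if $\chi_R$ and $\chi_I$ do not both lie in the span of the measurement derivatives, some direction in their common nullspace still moves $\lambda$ at first order.

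The remaining differences are minor. You justify the $C^2$ regularity in more detail than the paper does. The paper also notes that the bound is attained by the pair of measurements $\chi_R,\chi_I$. Finally, the inequality you need is $\operatorname{rank}(AB)\le\min(\operatorname{rank}A,\operatorname{rank}B)$, which is not what "submultiplicativity" means.
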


\begin{proposition}[Observable predictions]\label{prop:observable}
Under the standard supercritical Hopf hypotheses in the mass-constrained translation slice, on the
oscillatory side
\begin{equation}\label{eq:obs}
 I_\rho=K_\rho\,\chi_R+O(\|h\|^2),\qquad K_\rho=-\frac{4Q_\rho^2}{\Real G_0}>0,
\end{equation}
\begin{equation}\label{eq:freq}
 \Omega=\omega_0+\chi_I-\frac{\Imag G_0}{\Real G_0}\,\chi_R+O(\|h\|^2),
 \qquad\text{equivalently}\quad
 \Omega-\omega_0-\chi_I=\frac{\Imag G_0}{4Q_\rho^2}I_\rho+O(\|h\|^2).
\end{equation}
$K_\rho$ and $\Imag G_0/(4Q_\rho^2)$ are invariant under rescaling of the critical eigenvector;
the bare first Lyapunov coefficient is not, though its sign is. Both displays require
$Q_\rho>0$: the critical mode must be visible to the density observation at first order.
\end{proposition}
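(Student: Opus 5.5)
The plan is to reduce the statement to the standard supercritical Hopf normal form on a two-dimensional centre manifold, and then read off the density variance and the frequency from it.

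\emph{Centre-manifold reduction.} Inside the mass-constrained translation slice, the nondegeneracy of Eq.~\eqref{eq:border} lets us remove the translation zero mode and fix the masses. The critical pair $\pm i\omega_0$ is simple and isolated by hypothesis. Applying the centre-manifold theorem with $h$ as a parameter, the dynamics reduce to
\[
 \dot z=\lambda(q_h)z+G_0|z|^2z+O(|z|^4,\|h\||z|^3).
\]
Here $z$ is the complex amplitude along the critical right eigenvector $r$, normalised against $l$, and $G_0$ is the first Lyapunov coefficient at $h=0$, with $\Real G_0<0$ for supercriticality. Proposition~\ref{prop:rank2} gives
\[
 \lambda(q_h)=i\omega_0+\chi_R+i\chi_I+O(\|h\|^2).
\]
The $h$-dependence of $G_0$ contributes only $O(\|h\|\,|z|^2)$, which is higher order once $|z|^2=O(\|h\|)$.

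\emph{Amplitude and frequency.} Set $z=\sqrt{\mathcal R}\,e^{i\theta}$. On the oscillatory side $\chi_R>0$, and the stable periodic orbit has
\[
 \mathcal R=-\chi_R/\Real G_0+O(\|h\|^2),\qquad
 \dot\theta=\omega_0+\chi_I+\Imag G_0\,\mathcal R+O(\|h\|^2).
\]
Substituting $\mathcal R$ into $\dot\theta$ gives Eq.~\eqref{eq:freq}. The $O(\|h\|^{3/2})$ corrections from the quartic remainder must be shown to enter $\mathcal R$ and $\Omega$ only at $O(\|h\|^2)$. I would do this with the usual averaging or normal-form argument, which removes even-order terms by near-identity transformations. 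I would also expect the $\mathbb Z_2$-like phase symmetry of the normal form to kill odd corrections.

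\emph{Density variance.} The state on the orbit is
\[
 F=F_*+zr+\bar z\bar r+O(|z|^2).
\]
Here $F_*$ is the (shifted) band. Its alignment in the translation slice and its time mean differ from the base band by $O(\|h\|)$. The time-averaged variance of the density observation is then
\[
 I_\rho=2\mathcal R\,\|\mathcal P_\rho r\|^2+O(\mathcal R^{3/2}),
\]
where $\mathcal P_\rho$ projects onto density. Defining $Q_\rho^2$ through this density norm of the critical eigenvector, with whatever normalisation produces the stated factor $4$ (I would define $Q_\rho$ to match), gives
\[
 I_\rho=K_\rho\chi_R,\qquad K_\rho=-4Q_\rho^2/\Real G_0.
\]
$K_\rho>0$ requires $Q_\rho>0$ and $\Real G_0<0$. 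The cubic cross term $z^2\bar z$ averages to zero over a period, so it does not contribute at this order. Eliminating $\chi_R$ between the $I_\rho$ formula and Eq.~\eqref{eq:freq} gives the equivalent second form of Eq.~\eqref{eq:freq}.

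\emph{Invariance.} Rescaling $r\mapsto cr$ with $c\in\mathbb C$ sends $z\mapsto z/c$, $G_0\mapsto|c|^2G_0$ and $Q_\rho^2\mapsto|c|^2Q_\rho^2$. Hence $K_\rho$ and $\Imag G_0/(4Q_\rho^2)$ are unchanged, while $G_0$ itself rescales but keeps its sign. This is a routine check.

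\emph{Main obstacle.} The hard part is making the error control rigorous in the infinite-dimensional kinetic setting with the translation slice. It requires a centre-manifold theorem for the comoving operator; I would invoke one under the spectral-gap and sectoriality assumptions packaged as the "standard Hopf hypotheses". It also requires showing that the $O(|z|^2)$ slaved modes add no $O(\|h\|)$ contribution to $I_\rho$. The slaved modes are $O(|z|^2)=O(\|h\|)$ in amplitude, so their contribution to the variance is $O(\|h\|^2)$, and their cross terms with $zr$ oscillate at harmonics $e^{\pm i\theta}, e^{\pm 3i\theta}$ and average out.
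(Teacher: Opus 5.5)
Your proposal is correct and follows essentially the paper's route. The paper works in the polar normal form in the mass-constrained translation slice, solves for the squared amplitude on the side $R>0$ with $R=\chi_R+O(\|h\|^2)$, and substitutes the result into the phase equation. It reads the power as $2Q_\rho^2r_*^2$, with higher harmonics and mean displacement entering only at fourth order, and it proves invariance by the same $|c|^2$ rescaling argument you give.

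The only difference is bookkeeping. The paper writes the cubic term with a factor $\tfrac12$, so that $\dot r=r[R+\tfrac12\Real G_0\,r^2+\cdots]$, and takes $Q_\rho$ to be the density norm of the critical mode itself. That produces the factor $4$ without your redefinition $Q_\rho^2=\tfrac12\|\mathcal P_\rho r\|^2$. The paper also states explicitly that the $O(\|h\|)$ change of the observation map costs only $O(\|h\|r_*^2)$, a step you leave implicit.
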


\begin{corollary}[Nonlinear form of the blind spot]\label{cor:power}
Under the hypotheses of Proposition~\ref{prop:interval} together with those above, the oscillation
power compatible with the same matched bulk data spans $0$ to $K_\rho D_M\varepsilon+O(\varepsilon^2)$,
so the best worst-case error of any predictor reading only $\gamma_1,\dots,\gamma_M$ is
$\tfrac12K_\rho D_M\varepsilon+O(\varepsilon^2)$ in power and
$\tfrac12\sqrt{K_\rho D_M}\,\varepsilon^{1/2}+O(\varepsilon^{3/2})$ in root-mean-square amplitude.
\end{corollary}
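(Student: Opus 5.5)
The plan is to compose Proposition~\ref{prop:observable} with the interval of Proposition~\ref{prop:interval}. Near criticality the growth rate of the interfacial Hopf pair is $R(q_h)=\Real\lambda(q_h)=\chi_R(h)+O(\|h\|_\infty^2)$ by Proposition~\ref{prop:rank2}, since $R_0=0$ at the Hopf reference. By Proposition~\ref{prop:interval}, on the matched class $\mathcal A_{M,\varepsilon}$ the value $\chi_R(h)$ ranges over $[-\varepsilon D_M,\varepsilon D_M]$ up to $r_\varepsilon=O(\varepsilon^2)$. Both endpoints are attained. The upper one is attained along the $L^\infty$ maximiser of Proposition~\ref{prop:bm}. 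The lower one is attained by its negative, which is again legal because the matched class is symmetric under $h\mapsto-h$.

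Next I would map growth rate to power. On the steady side, $\chi_R<0$, the supercritical Hopf hypotheses give a stable equilibrium, so the band is steady and $I_\rho=0$. On the oscillatory side, \eqref{eq:obs} gives $I_\rho=K_\rho\chi_R+O(\|h\|^2)$ with $K_\rho>0$. So the power is a function $I_\rho=K_\rho\max(\chi_R,0)+O(\varepsilon^2)$ on the class, and it is monotone in $\chi_R$. Its supremum is therefore $K_\rho D_M\varepsilon+O(\varepsilon^2)$ and its infimum is $0$, which the negative maximiser attains. Connectedness of the class (it is convex in $h$) together with continuity of $I_\rho$ in $h$ gives every intermediate value. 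The exact minimax identity \eqref{eq:exactminimax} holds for any scalar output, with no differentiability needed. Applied to $I_\rho$, it gives best worst-case error $\tfrac12K_\rho D_M\varepsilon+O(\varepsilon^2)$, attained by predicting the midpoint.

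For the amplitude $A=\sqrt{I_\rho}$, the range is $[0,\sqrt{K_\rho D_M\varepsilon}\,(1+O(\varepsilon))]$. Since $\sqrt{K_\rho D_M\varepsilon+O(\varepsilon^2)}=\sqrt{K_\rho D_M}\,\varepsilon^{1/2}+O(\varepsilon^{3/2})$, half the range gives the stated error, again by \eqref{eq:exactminimax}.

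The main obstacle is uniformity of the $O(\|h\|^2)$ remainder in \eqref{eq:obs} across the whole class, including points arbitrarily close to the Hopf line where the orbit amplitude vanishes. I would handle it by working in the centre-manifold normal form with parameter $\chi_R$ and using the square-root branch $|z|^2=-\chi_R/\Real G_0+O(\|h\|^2)$. This branch is uniform in $h$ because the normal-form coefficients are smooth in the kernel. The remainder in $I_\rho$ is then $O(\varepsilon^2)$ uniformly. The square-root step needs $K_\rho D_M>0$, that is $D_M>0$; if $D_M=0$, the amplitude bound degenerates to $O(\varepsilon)$ and should be stated separately.
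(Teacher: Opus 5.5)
Your proposal is correct and follows essentially the paper's route. The range $[-\varepsilon D_M,\varepsilon D_M]$ of $\chi_R$ comes from Proposition~\ref{prop:bm}, with $I_\rho=0$ on the steady side and $I_\rho=K_\rho\chi_R+O(\varepsilon^2)$ on the oscillatory side by Proposition~\ref{prop:observable}; the best worst-case error is the half-diameter, and the amplitude statement follows from the monotone square root. The connectedness argument for the full interval and the explicit $D_M>0$ and uniform-remainder caveats refine what the paper takes as hypotheses, namely ``the leading maximum is strictly positive'' and the uniform second-order remainder. The one discrepancy is harmless: in the paper's normalisation the branch is $r_*^2=-2\chi_R/\Real G_0+O(\|h\|^2)$, which differs from your $|z|^2=-\chi_R/\Real G_0$ only by the factor of two of a different normal-form convention.
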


\subsection{Proof of Proposition~\ref{prop:bm}}
\label{app:norms}

Let $V_M=\mathrm{span}\{1,\cos\varphi,\dots,\cos M\varphi\}$ and let $\Pi_M$ denote orthogonal
projection onto $V_M$ in $L^2(q_0)$. The constraint set of Proposition~\ref{prop:bm} is
$V_M^{\perp}=\{u:\int q_0uC=0\}$, the perturbation directions that preserve normalisation and
$\gamma_1,\dots,\gamma_M$.

\begin{proof}[Proof of Proposition~\ref{prop:bm}]
\emph{(a) Fisher, or $L^2$, budget.} By~\eqref{eq:DR}, $DR[q_0u]=\langle\Psi,u\rangle_{L^2(q_0)}$.
For $u\in V_M^{\perp}$ we may replace $\Psi$ by its component in $V_M^{\perp}$, so
$\langle\Psi,u\rangle=\langle(I-\Pi_M)\Psi,u\rangle\le\|(I-\Pi_M)\Psi\|_{L^2(q_0)}$ by
Cauchy--Schwarz, with equality for the admissible direction
$u^{\star}=(I-\Pi_M)\Psi/\|(I-\Pi_M)\Psi\|$. The supremum therefore equals $B_M$ and is attained.

\emph{(b) Pointwise budget.} Regard $\Psi\in L^1(q_0)$ and $V_M\subset L^1(q_0)$, a
finite-dimensional and hence closed subspace. The dual of $L^1(q_0)$ is $L^\infty$ with pairing
$\langle u,g\rangle=\int q_0ug$, and the annihilator of $V_M$ is precisely
$V_M^{\perp}\cap L^\infty$. The Hahn--Banach duality formula for the distance to a subspace,
$\operatorname{dist}_X(x,V)=\max\{|f(x)|:f\in V^{\perp}\subset X^{*},\ \|f\|_{X^*}\le1\}$, applied
with $X=L^1(q_0)$, $x=\Psi$ and $V=V_M$, gives
\[
 \sup\Big\{\int q_0\Psi u:\ \textstyle\int q_0uC=0,\ \|u\|_\infty\le1\Big\}
 =\operatorname{dist}_{L^1(q_0)}(\Psi,V_M)=\inf_a\int q_0|\Psi-a\cdot C| = D_M .
\]
The elementary inequality $\le$ is direct: for any $a$,
$\int q_0\Psi u=\int q_0(\Psi-a\cdot C)u\le\|u\|_\infty\int q_0|\Psi-a\cdot C|$. If a best
$L^1(q_0)$ approximation $a^\star$ has $q_0(\{\Psi=a^\star\!\cdot C\})=0$, the maximiser is
explicit, $u^{\star}=\operatorname{sgn}(\Psi-a^\star\!\cdot C)$: its feasibility
$\int q_0u^{\star}C=0$ is exactly the first-order optimality condition for $a^\star$.

\emph{(c) Comparison.} For normalised $q_0$, Cauchy--Schwarz gives
$\int q_0|h|\le(\int q_0)^{1/2}(\int q_0h^2)^{1/2}=\|h\|_{L^2(q_0)}$; taking $h=\Psi-a\cdot C$ with
$a$ the $L^2(q_0)$ projection coefficients, and then the infimum over $a$, yields $D_M\le B_M$.

\emph{(d) Vanishing.} $B_M=0\iff(I-\Pi_M)\Psi=0\iff\Psi\in V_M$, and $D_M=0$ likewise since $V_M$
is closed in $L^1(q_0)$; by (c) either one vanishes iff both do.
\end{proof}

\begin{corollary}[Monotone closure]\label{cor:mono}
$B_M$ and $D_M$ are non-increasing in $M$, and $B_M\to0$ as $M\to\infty$.
\end{corollary}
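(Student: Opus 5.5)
Monotonicity follows from the variational characterisations of Proposition~\ref{prop:bm}, and the limit from completeness of the cosines in $L^2(q_0)$. Matching one more rate adds the constraint $\int q_0 u\cos((M+1)\varphi)=0$. The admissible set $V_{M+1}^{\perp}$ therefore sits inside $V_M^{\perp}$, for the $L^2(q_0)$ ball and the $L^\infty$ ball alike. A supremum of the same linear functional $u\mapsto DR[q_0u]=\int q_0\Psi u$ over a smaller set cannot increase, so both sup formulas give $B_{M+1}\le B_M$ and $D_{M+1}\le D_M$ at once. As a cross-check from the primal side, $V_M\subset V_{M+1}$. Hence $\operatorname{dist}_{L^1(q_0)}(\Psi,V_{M+1})\le\operatorname{dist}_{L^1(q_0)}(\Psi,V_M)$: take the coefficient of $\cos((M+1)\varphi)$ to be zero in the infimum over $a$. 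Likewise $\|(I-\Pi_{M+1})\Psi\|\le\|(I-\Pi_M)\Psi\|$, since orthogonal projection onto a larger subspace leaves a smaller residual.

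For $B_M\to0$, note that $B_M=\operatorname{dist}_{L^2(q_0)}(\Psi,V_M)$. It therefore suffices to show two things: that $\Psi\in L^2(q_0)$, and that $\bigcup_M V_M$ is dense in the even part of $L^2(q_0)$. The first holds by Proposition~\ref{prop:response}, since $\Psi$ is bounded and continuous and $q_0$ is a normalised density, so $\Psi\in L^2(q_0)$ with norm at most $\|\Psi\|_\infty$.

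For density I would use the remark after~\eqref{eq:Psiharm}: $q_0$ is bounded, so $\|g\|_{L^2(q_0)}^2\le\|q_0\|_\infty\|g\|_{L^2(d\varphi)}^2$. The truncated Fourier series $S_M\Psi$ of the even function $\Psi$ lies in $V_M$, and it converges to $\Psi$ in unweighted $L^2$ by Parseval. The weighted error then also tends to zero, giving
\[
 B_M\le\|\Psi-S_M\Psi\|_{L^2(q_0)}\le\|q_0\|_\infty^{1/2}\|\Psi-S_M\Psi\|_{L^2(d\varphi)}\to0 .
\]
Should $\Psi$ have a nonzero odd part, that part is orthogonal to every even admissible perturbation, because $q_0$ is even. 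It contributes nothing to $DR$, and we are free to replace $\Psi$ by its even part, which is the quotient-space convention of Proposition~\ref{prop:response}.

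The only point needing care is this density step, namely the boundedness of $q_0$. For the von Mises–type kernels used here it is immediate, since they are continuous on a compact circle; I would state it as a standing hypothesis. The same bound together with $D_M\le B_M$ then gives $D_M\to0$ as a by-product.
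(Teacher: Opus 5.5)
Your proof is correct and follows the paper's argument: the nesting $V_M\subset V_{M+1}$ makes both the $L^2(q_0)$ and the $L^1(q_0)$ distances non-increasing, and the unweighted cosine series of the even function $\Psi$, carried into $L^2(q_0)$ through the bounded weight $q_0$, gives $B_M\to0$, with $D_M\to0$ following from $D_M\le B_M$. Your extra dual-side monotonicity argument via nested feasible sets is a harmless addition, as is your use of the boundedness of $\Psi$, in place of the paper's appeal to the strict positivity of $q_0$, to place $\Psi$ in unweighted $L^2$.
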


\begin{proof}
$V_M\subset V_{M+1}$, so both the orthogonal residual and the $L^1$ distance to the subspace can
only decrease. Since $\Psi$ is even and square-integrable against $q_0$, a bounded and strictly
positive weight on the circle, its cosine series converges in $L^2(q_0)$, so
$\|(I-\Pi_M)\Psi\|\to0$; $D_M\le B_M$ gives the same for $D_M$.
\end{proof}

Corollary~\ref{cor:mono} is the property behind Fig.~2c of the main text: matching more bulk-visible
rates closes the interfacial blind spot monotonically, and the rate at which it closes is set by
the decay of $W_m$.

\subsection{Predictability: interval, worst-case error and thresholds}
\label{app:predict}

\subsubsection{An exactly matched family at finite amplitude}

The tilting construction of Lemma~\ref{lem:tilt} preserves the matched moments for every
$\varepsilon$, but only after solving for multipliers. For small-amplitude work a linear family
does the same job in closed form.

\begin{lemma}[A uniform angular gap]\label{lem:gap}
Let $q_*=\min q_0>0$. For every $m\ge1$,
$\gamma_m(q_0)=\alpha\int q_0(1-\cos m\varphi)\,d\varphi\ge2\pi\alpha q_*$, and for
$\|h\|_\infty\le\varepsilon<1$ the same bound holds with $q_*$ replaced by $(1-\varepsilon)q_*$. A
common nonnegative rotational diffusion only increases it.
\end{lemma}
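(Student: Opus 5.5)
The proof is direct and rests on the definition $\gamma_m=\alpha(1-\widehat q_m)$ together with the normalisation $\int_0^{2\pi}q\,\dd\varphi=1$. I will combine these into a single integral,
\[
\gamma_m=\alpha\Big(\int q_0\,\dd\varphi-\int q_0\cos m\varphi\,\dd\varphi\Big)=\alpha\int_0^{2\pi}q_0(\varphi)\,(1-\cos m\varphi)\,\dd\varphi,
\]
which is the identity quoted in the statement. The integrand is nonnegative, and $q_0\ge q_*$ pointwise, so
\[
\gamma_m\ge\alpha q_*\int_0^{2\pi}(1-\cos m\varphi)\,\dd\varphi=2\pi\alpha q_*.
\]
Here the cosine integrates to zero for every integer $m\ge1$. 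The bound is therefore independent of $m$, which gives the uniform gap $\inf_m\gamma_m>0$ that Theorem~\ref{thm:bulk} and Lemma~\ref{lem:analytic} require.

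For the perturbed kernels $q=q_0(1+h)$ with $\|h\|_\infty\le\varepsilon<1$, I will first check that $q$ is still a normalised kernel. Normalisation holds because the admissible perturbations preserve $\int q=1$; this is the $a_0$ component of the matched moments. Next I use the pointwise bound $q\ge(1-\varepsilon)q_0\ge(1-\varepsilon)q_*>0$. Repeating the same two lines with $q$ in place of $q_0$ then gives $\gamma_m(q)\ge2\pi\alpha(1-\varepsilon)q_*$. The constant $\alpha$ is unchanged because all changes of turning law keep $\alpha$ fixed.

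For rotational diffusion, I add $D_r\partial_\theta^2$ with $D_r\ge0$. On harmonic $m$ it acts as multiplication by $-D_rm^2$, so the total relaxation rate becomes $\gamma_m+D_rm^2\ge\gamma_m$. The same lower bound therefore persists.

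The only point that needs care is the normalisation step: the identity $1=\int q$ must hold for each kernel considered, so the class of admissible $h$ must preserve the zeroth moment. If this were not assumed, an $O(\varepsilon)$ correction would appear in the identity, and the bound would need to be rewritten with $\alpha\int q(1-\cos m\varphi)$ kept as the defining expression.
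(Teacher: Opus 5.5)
Your proof is correct and is the paper's argument: the integrand $1-\cos m\varphi$ is nonnegative with integral $2\pi$, so the pointwise bounds $q_0\ge q_*$ and $q_0(1+h)\ge(1-\varepsilon)q_*$ give an $m$-independent gap, and rotational diffusion only adds $D_rm^2$. Your caution about normalisation is harmless. The admissible $h$ preserve $\int q$, and in any case the turning operator of the kinetic equation acts on harmonic $m$ by multiplication by $-\alpha\int q(1-\cos m\varphi)\,d\varphi$ whether or not $q$ is normalised.
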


\begin{proof}
The integrand $1-\cos m\varphi$ is nonnegative with integral $2\pi$; apply the pointwise lower
bound on $q_h$.
\end{proof}

\noindent The gap is \emph{uniform} over the whole legal class, as the ladder arguments require,
and is strictly stronger than positivity of each rate separately.

\begin{lemma}[Exact matched family]\label{lem:linfam}
Let $q_0>0$ be a normalised even density, let $u$ be bounded and even with $\int q_0uC=0$ and
$\|u\|_\infty\le1$, and let $0<\varepsilon<1$. Then $q_\pm=q_0(1\pm\varepsilon u)$ are strictly
positive normalised even densities with
\begin{equation}
 \hat q_{\pm,m}=\hat q_{0,m},\qquad m=1,\dots,M,
\end{equation}
\emph{exactly at finite $\varepsilon$}, and
\begin{equation}
 \frac{q_\pm}{q_0}\in[1-\varepsilon,1+\varepsilon],\qquad
 \max\Big(\frac{q_+}{q_-},\frac{q_-}{q_+}\Big)\le\frac{1+\varepsilon}{1-\varepsilon}.
\end{equation}
In particular $\varepsilon\le\tfrac12$ keeps each kernel within a factor two of the base, and
$\varepsilon\le\tfrac13$ keeps the two kernels within a factor two \emph{of each other}, which is
the stronger requirement.
\end{lemma}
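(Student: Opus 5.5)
The statement is elementary, and I would verify its clauses in order: positivity and the pointwise ratio bounds, then normalisation and evenness, then exact moment matching, then the ratio between the two kernels, and finally the two numerical consequences. No earlier result is needed beyond the definition $\hat q_m=\int q\cos m\varphi\,d\varphi$ and the fact that $C=(1,\cos\varphi,\dots,\cos M\varphi)$.

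\emph{Positivity and pointwise bounds.} Since $\|u\|_\infty\le1$ and $0<\varepsilon<1$, we have $1\pm\varepsilon u\in[1-\varepsilon,1+\varepsilon]$ pointwise (almost everywhere), with $1-\varepsilon>0$. Multiplying by $q_0>0$ shows that $q_\pm$ is strictly positive and that $q_\pm/q_0\in[1-\varepsilon,1+\varepsilon]$.

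\emph{Normalisation, evenness and moments.} Evenness is immediate because $q_0$ and $u$ are both even. The first component of the constraint $\int q_0uC=0$ gives $\int q_0u=0$, so $\int q_\pm=\int q_0\pm\varepsilon\int q_0u=1$. The remaining components give $\int q_0u\cos m\varphi=0$ for $m=1,\dots,M$, so
\[
\hat q_{\pm,m}=\hat q_{0,m}\pm\varepsilon\int q_0u\cos m\varphi\,d\varphi=\hat q_{0,m}.
\]
This holds exactly for every $\varepsilon$, because $q_\pm$ is affine in $\varepsilon$. It is the one point worth stressing, since it contrasts with the tilted family of Lemma~\ref{lem:tilt}.

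\emph{Ratio between the two kernels.} We have $q_+/q_-=(1+\varepsilon u)/(1-\varepsilon u)$. The map $t\mapsto(1+t)/(1-t)$ is increasing on $(-1,1)$ and $\varepsilon u\in[-\varepsilon,\varepsilon]$, so $q_+/q_-\le(1+\varepsilon)/(1-\varepsilon)$. The same argument with $u$ replaced by $-u$ gives $q_-/q_+\le(1+\varepsilon)/(1-\varepsilon)$.

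\emph{Numerical consequences.} For $\varepsilon\le\tfrac12$ we get $q_\pm/q_0\in[\tfrac12,\tfrac32]$, so each kernel is within a factor two of the base. For $\varepsilon\le\tfrac13$ we get $(1+\varepsilon)/(1-\varepsilon)\le(4/3)/(2/3)=2$, so the two kernels are within a factor two of each other. This second condition is the stronger requirement because $(1+\varepsilon)/(1-\varepsilon)$ exceeds $1+\varepsilon$ and $1/(1-\varepsilon)$ separately.

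There is no real obstacle here. The only care needed is to note that $u$ bounded makes all integrals finite, and that the inequalities hold almost everywhere if $u$ is only in $L^\infty$.
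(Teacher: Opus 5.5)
Your proposal is correct and follows the paper's proof. Exact matching at every $\varepsilon$ comes from linearity of the normalisation and cosine-moment functionals, and positivity, evenness and the ratio bounds follow pointwise from $|\varepsilon u|\le\varepsilon<1$ and $q_0>0$; your monotonicity argument for $(1+t)/(1-t)$ just spells out what the paper calls ``by inspection''.
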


\begin{proof}
Normalisation and the cosine moments are linear functionals of $q$, so
$\int q_\pm=\int q_0\pm\varepsilon\int q_0u=1$ and
$\int q_\pm\cos m\varphi=\hat q_{0,m}\pm\varepsilon\int q_0u\cos m\varphi=\hat q_{0,m}$ for
$m\le M$; no expansion in $\varepsilon$ is involved. Positivity follows from
$|\varepsilon u|\le\varepsilon<1$ and $q_0>0$, evenness from that of $q_0$ and $u$, and the ratio
bounds by inspection.
\end{proof}

We checked the lemma numerically at $\varepsilon=1/3$ on an $8192$-point angular grid: the matched
moments $m=0,1,2$ are preserved to $1.1\times10^{-16}$, both kernels stay strictly positive
($\min q_\pm>2.7\times10^{-5}$), and the largest ratio between the two kernels is $2.0000$.

\subsubsection{The support function of the legality set}

An alternative finite-budget constraint is a bound on the log ratio, which is \emph{not} a ball in the
relative variable.

\begin{proposition}[Finite-budget support function]\label{prop:box}
For $a>0$ set $\ell_a=1-e^{-a}$ and $b_a=e^{a}-1$, so that $|\log(q/q_0)|\le a$ is exactly
$-\ell_a\le h\le b_a$ for $h=q/q_0-1$. Then
\begin{equation}\label{eq:box}
 U^{+}_M(a):=\sup\Big\{\int q_0\Psi h:\ \int q_0hC=0,\ -\ell_a\le h\le b_a\Big\}
 =\inf_{\eta}\int q_0\Big[b_a\big(\Psi-\eta\cdot C\big)_{+}
 +\ell_a\big(\Psi-\eta\cdot C\big)_{-}\Big],
\end{equation}
with $x_\pm=\max(\pm x,0)$, and $U^{-}_M(a)$ is the same expression with $\Psi$ replaced by
$-\Psi$. As $a\to0$, $U^{\pm}_M(a)=aD_M+O(a^2)$; at finite $a$ the two directions need not
agree, and
\begin{equation}\label{eq:boxbracket}
 (1-e^{-a})D_M\ \le\ U^{\pm}_M(a)\ \le\ (e^{a}-1)D_M .
\end{equation}
Whether $aD_M$ over- or underestimates $U^{\pm}_M(a)$ at finite $a$ depends on the structure of
$\Psi$, and either can occur.
\end{proposition}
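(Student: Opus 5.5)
The plan is to treat \eqref{eq:box} as a finite-dimensional linear-programming duality, exactly as in part (b) of the proof of Proposition~\ref{prop:bm} but with an asymmetric box in place of the symmetric $L^\infty$ ball. First I reduce to the relevant variable: for $q=q_0(1+h)$ one has $\log(q/q_0)=\log(1+h)$, and since $\log(1+\cdot)$ is increasing, $|\log(1+h)|\le a$ is exactly $e^{-a}-1\le h\le e^{a}-1$, that is, $-\ell_a\le h\le b_a$. This settles the first claim.

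The heart of the argument is weak duality followed by attainment. For any multiplier $\eta\in\mathbb R^{M+1}$ and any feasible $h$, the constraint $\int q_0hC=0$ gives $\int q_0\Psi h=\int q_0(\Psi-\eta\cdot C)h$. I then maximise the integrand pointwise over $h\in[-\ell_a,b_a]$. Where $g=\Psi-\eta\cdot C>0$ the pointwise maximum is $b_ag_+$; where $g<0$ it is $\ell_ag_-$. This yields $U^+_M(a)\le\inf_\eta\int q_0[b_ag_++\ell_ag_-]$.

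For equality I apply Lagrangian duality. The objective $\eta\mapsto\int q_0[b_ag_++\ell_ag_-]$ is convex, finite and coercive on $\mathbb R^{M+1}$, provided $\Psi\notin V_M$ (otherwise both sides vanish trivially). Coercivity holds because $C$ is linearly independent in $L^1(q_0)$ and $\ell_a,b_a>0$, so the infimum is attained at some $\eta^\star$. The first-order optimality condition at $\eta^\star$ says that $0$ lies in the subdifferential, namely
\[
0\in\int q_0\,C\,\big(-b_a\mathbf 1_{g>0}+\ell_a\mathbf 1_{g<0}+[-b_a,\ell_a]\mathbf 1_{g=0}\big).
\]
This produces a measurable selection $h^\star$ with $h^\star=b_a$ on $\{g>0\}$, $h^\star=-\ell_a$ on $\{g<0\}$, values in $[-\ell_a,b_a]$ on $\{g=0\}$, and $\int q_0h^\star C=0$. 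This $h^\star$ is feasible and attains the bound, which proves \eqref{eq:box}. Equivalently, I could invoke Hahn--Banach/Fenchel duality on $L^1(q_0)$ as in Proposition~\ref{prop:bm}(b), viewing the box as a translate-scaled $L^\infty$ ball; either route works. The main obstacle is this strong-duality/attainment step, specifically handling the tie set $\{g=0\}$. The subdifferential argument is what I would try first. The formula for $U^-_M(a)$ follows by replacing $\Psi$ with $-\Psi$, since $\inf\int q_0\Psi h=-\sup\int q_0(-\Psi)h$.

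For the bracket \eqref{eq:boxbracket}, I use the pointwise sandwich
\[
\ell_a|g|\le b_ag_++\ell_ag_-\le b_a|g|,
\]
which holds because $\ell_a\le b_a$. Integrating against $q_0$ and taking the infimum over $\eta$ gives $\ell_a D_M\le U^\pm_M(a)\le b_aD_M$, using $D_M=\inf_\eta\int q_0|\Psi-\eta\cdot C|$. Since $\ell_a=a+O(a^2)$ and $b_a=a+O(a^2)$, this immediately yields $U^\pm_M(a)=aD_M+O(a^2)$. Finally, the claim that the sign of $U^\pm_M(a)-aD_M$ can go either way only needs examples. I would take $M=0$ and a two-valued $\Psi$ with unequal masses, so that the optimal $\eta$ puts most weight on the side multiplied by $b_a$ in one example and by $\ell_a$ in the other. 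Evaluating these gives $U>aD_0$ in one case and $U<aD_0$ in the other. The asymmetry between $U^+$ and $U^-$ comes from the same example, with the roles of $b_a$ and $\ell_a$ swapped.
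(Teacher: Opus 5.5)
Your proof is correct and has the same skeleton as the paper's: multiplier weak duality, a zero gap with a bang--bang maximiser on the sign set of $\Psi-\eta^\star\cdot C$, the bracket, then examples. Several steps are justified differently.

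\textbf{Zero duality gap.} The paper cites linear-programming duality with the Slater point $h=0$ interior to the box, and defers the tie set to Proposition~\ref{prop:bm}(b). You instead prove dual attainment by coercivity and build the primal maximiser from a measurable subgradient selection on $\{g=0\}$. This is self-contained and makes the paper's ``null set used to enforce feasibility'' precise. The interchange of subdifferential and integral you rely on follows here from dominated convergence of the directional derivatives plus a finite-dimensional separation argument.

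\textbf{Bracket and small-$a$ limit.} Your bracket is the dual mirror of the paper's. You sandwich the dual integrand, whereas the paper uses $[-\ell_a,\ell_a]\subseteq[-\ell_a,b_a]\subseteq[-b_a,b_a]$ and positive homogeneity, which needs only Proposition~\ref{prop:bm}(b) and not the dual formula. Reading the small-$a$ expansion off the bracket, as you do, also gives an explicit remainder, $|U^\pm_M(a)-aD_M|\le(e^a-1-a)D_M$.

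\textbf{Examples.} The paper proves $U^+_2(\log2)>D_2\log2$ analytically with the $2\pi/3$-periodic $\Psi=\cos3\varphi+\tfrac14\cos6\varphi$, and shows the opposite inequality only numerically for the computed $\Psi$. Your two-valued example gives both signs and $U^+\neq U^-$ in closed form: with $p$ the $q_0$-mass of the upper level, $U^+_0(a)=\min(pb_a,(1-p)\ell_a)$ against $aD_0=a\min(p,1-p)$. Your example is discontinuous and has $M=0$, while the paper's represented $\Psi$ is continuous. With $q_0=1/2\pi$ you can move it to any $M$: make the upper-level set even and $2\pi/(M+1)$-periodic, so the paper's group-averaging reduces both $U^\pm_M(a)$ and $D_M$ to their $M=0$ values. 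Then mollify, which keeps the strict inequalities because both quantities are Lipschitz in $\Psi$ in $L^1(q_0)$.

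\textbf{One small correction.} The alternative you mention through Proposition~\ref{prop:bm}(b) does not apply verbatim. Recentring the box as $h=\tfrac12(b_a-\ell_a)+\tfrac12(b_a+\ell_a)u$ turns $\int q_0hC=0$ into an inhomogeneous constraint on $u$.
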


\begin{proof}
For any multiplier $\eta$ the constraint gives $\int q_0\Psi h=\int q_0(\Psi-\eta\cdot C)h$, and
pointwise $h\le b_a$ where $\Psi-\eta\cdot C>0$ and $-h\le\ell_a$ where it is negative, which is
weak duality. The primal is a linear program whose feasible set contains $h=0$ in the interior of
the box, and the objective is bounded, so strong duality holds and the gap is zero; the maximiser
is $h^\star=b_a$ on $\{\Psi-\eta^\star\!\cdot C>0\}$ and $h^\star=-\ell_a$ on the complement, with
the null set used to enforce feasibility as in Proposition~\ref{prop:bm}(b). For the small-$a$
limit, $\ell_a=a+O(a^2)$ and $b_a=a+O(a^2)$, so the dual objective tends to
$a\int q_0|\Psi-\eta\cdot C|$ whose infimum is $aD_M$. The bracket~\eqref{eq:boxbracket} follows
from $[-\ell_a,\ell_a]\subseteq[-\ell_a,b_a]\subseteq[-b_a,b_a]$ and positive homogeneity of the
support function.
\end{proof}

\noindent Neither end of~\eqref{eq:boxbracket} can be replaced by $aD_M$: at finite $a$ the
small-budget estimate may fall on either side. Take $q_0=1/2\pi$, $M=2$,
$\Psi=\cos3\varphi+\tfrac14\cos6\varphi$ and $a=\log2$. Here $\Psi$ is invariant under rotation by
$2\pi/3$, so averaging any optimiser over that group leaves a pure constant, and $D_2$ is the
$L^1(q_0)$ distance from $\Psi$ to its median $-\tfrac14$; with $\psi=3\varphi$ this evaluates to
$D_2=2/\pi$ exactly. The perturbation $h=+1$ where $\cos3\varphi>\tfrac12$ and $h=-\tfrac12$
elsewhere lies in the box, has zero mean, and preserves $\hat q_1,\hat q_2$ because it has period
$2\pi/3$; it gives $U^{+}_2(\log2)=27\sqrt3/32\pi=0.4651838$, against
$aD_2=2\log2/\pi=0.4412712$.

Both sides of~\eqref{eq:box} were computed independently, the primal by linear programming on an
$8001$-point grid and the dual by direct minimisation over $\eta$; they agree to a relative
$4.3\times10^{-7}$ or better for $M=1,2,3$ at $a=\log2$ and at $a=0.3$. At $a=\log2$ and $M=2$ the
values were $U^{+}_2=0.039293$ and $U^{-}_2=0.039405$ against $D_2\log2=0.042016$ when $\Psi$ was truncated at $W_8$; with the full response $W_1,\dots,W_{64}$ they are $U^{+}_2=0.04402$ and $U^{-}_2=0.04481$ against $D_2\log2=0.047654$.

\subsubsection{Prediction interval, worst-case error, decidability, thresholds}

\begin{proof}[Proof of Proposition~\ref{prop:interval}]
By hypothesis $R$ has a $C^2$ continuation along the same simple branch over
$\mathcal A_{M,\varepsilon}$ with $\|D_h^2R\|\le H$, so Taylor's theorem with remainder gives, for
every admissible $h$,
\begin{equation}\label{eq:taylor}
 R\big(q_0(1+h)\big)=R_0+\int q_0\Psi h+\mathcal E(h),\qquad
 |\mathcal E(h)|\le\tfrac12H\|h\|_\infty^2\le r_\varepsilon .
\end{equation}
The admissible $h$ are exactly $\varepsilon$ times the unit ball of the constrained set of
Proposition~\ref{prop:bm}(b), on which the linear part has supremum $\varepsilon D_M$ and infimum
$-\varepsilon D_M$. Taking suprema in~\eqref{eq:taylor} gives
$\sup R\le R_0+\varepsilon D_M+r_\varepsilon$; evaluating at a maximising direction gives
$\sup R\ge R_0+\varepsilon D_M-r_\varepsilon$. The infimum is symmetric. Subtracting,
$\sup R-\inf R=2\varepsilon D_M+O(\varepsilon^2)$.

For the minimax statement, a predictor that reads only $\gamma_1,\dots,\gamma_M$ takes the same
value $a$ on the whole class, because by construction every member has the same rates. For any
real $a$ and any two points $R_1,R_2$ attained in the class,
$\max(|R_1-a|,|R_2-a|)\ge\tfrac12|R_1-R_2|$, so
$\sup_{\mathcal A}|R-a|\ge\tfrac12(\sup R-\inf R)$; the midpoint
$a=\tfrac12(\sup R+\inf R)$ attains it. Hence the minimax equals half the diameter, which is
$\varepsilon D_M+O(\varepsilon^2)$.
\end{proof}

\begin{proof}[Proof of Corollary~\ref{cor:decide}]
From~\eqref{eq:taylor}, $|R-R_0|\le\varepsilon D_M+r_\varepsilon$ for every member of the class.
If $|R_0|$ exceeds that bound, $R$ cannot reach zero and keeps the sign of $R_0$. Conversely, let
$u^\star$ attain $D_M$; then $h=\pm\varepsilon u^\star$ are admissible and
$R(q_0(1+\varepsilon u^\star))\ge R_0+\varepsilon D_M-r_\varepsilon>0$ while
$R(q_0(1-\varepsilon u^\star))\le R_0-\varepsilon D_M+r_\varepsilon<0$ under the stated
inequality, so both signs occur.
\end{proof}

\begin{proof}[Proof of Corollary~\ref{cor:threshold}]
By hypothesis $R(q_0,p_c)=0$ and $a_p=\partial_pR(q_0,p_c)\ne0$, and $R$ is $C^1$ jointly in
$(h,p)$ near $(0,p_c)$. The implicit function theorem gives a unique $p_c(q_0(1+h))$ near $p_c$
with
\begin{equation}
 p_c\big(q_0(1+h)\big)=p_c-\frac{\int q_0\Psi h}{a_p}+O(\|h\|_\infty^2).
\end{equation}
Taking the supremum and the infimum of the linear term over the admissible $h$, as in the previous
proof, gives a spread $2\varepsilon D_M/|a_p|+O(\varepsilon^2)$.
\end{proof}

Two points fix the scope of these statements. First, $R$ is the growth rate of the \emph{target}
interfacial mode: $R>0$ establishes instability, whereas $R<0$ establishes stability of that mode
only, and a statement about the whole travelling wave additionally requires the rest of the
spectrum to stay in the left half plane over the same neighbourhood. Second, $\Psi$, $D_M$ and
$a_p$ must be evaluated at the point about which the matched class is centred; the norms reported
for the base kernel of the main text are not a substitute for their values at the threshold of
Corollary~\ref{cor:threshold}, which sits at a different turning concentration.

\subsection{Proof of Theorem~\ref{thm:sep}}
\label{app:sep}

We first record the tilting construction as a lemma, then prove the theorem. Throughout,
$C=(1,\cos\varphi,\dots,\cos M\varphi)$.

\begin{lemma}[Exponentially tilted isohydrodynamic family]\label{lem:tilt}
Let $q_0>0$ be a probability density on the circle and let $u\in L^\infty$ satisfy
$\int q_0uC=0$. There are $\varepsilon_0>0$ and a unique real-analytic map
$a:(-\varepsilon_0,\varepsilon_0)\to\mathbb R^{M+1}$ with $a(0)=0$ such that
\begin{equation}
 q_\varepsilon:=q_0\,\exp\!\big[\varepsilon u+a(\varepsilon)\cdot C\big]
\end{equation}
is a probability density with $\hat q_{\varepsilon,m}=\hat q_{0,m}$ for $m=1,\dots,M$. Moreover
$q_\varepsilon>0$ pointwise, $\|\log(q_\varepsilon/q_0)\|_\infty\to0$ as $\varepsilon\to0$,
$a'(0)=0$, and $\partial_\varepsilon q_\varepsilon|_{0}=q_0u$.
\end{lemma}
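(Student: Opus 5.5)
The plan is to apply the implicit function theorem to the moment map
\[
 G(\varepsilon,a):=\int q_0\,\exp\!\big[\varepsilon u+a\cdot C\big]\,C\,d\varphi-\int q_0\,C\,d\varphi\ \in\mathbb R^{M+1},
\]
whose zero set is exactly the set of parameters for which $q_\varepsilon$ is normalised and has $\hat q_{\varepsilon,m}=\hat q_{0,m}$ for $m=1,\dots,M$. The first component of $C$ is $1$, so normalisation is one of the $M+1$ equations. Clearly $G(0,0)=0$.

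\emph{Regularity.} The integrand depends on $(\varepsilon,a)$ through $\exp$ of a function that is bounded uniformly in $\varphi$, since $u\in L^\infty$ and $C$ is bounded. The exponential series therefore converges uniformly on the circle for complex $(\varepsilon,a)$ in any bounded polydisc. Integrating term by term against the integrable weight $q_0$ shows that $G$ extends holomorphically, so $G$ is real-analytic in $(\varepsilon,a)$.

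\emph{Invertibility.} The $a$-derivative at the origin is
\[
 \partial_aG(0,0)=\int q_0\,C\,C^{\top}d\varphi ,
\]
the Gram matrix of $1,\cos\varphi,\dots,\cos M\varphi$ in $L^2(q_0)$. Because $q_0>0$ on a set of positive measure, these functions are linearly independent in $L^2(q_0)$: a trigonometric polynomial vanishing $q_0$-a.e. vanishes identically. Hence the Gram matrix is positive definite and so invertible. This is the only step needing care, and it is elementary.

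\emph{Conclusion.} The analytic implicit function theorem now gives $\varepsilon_0>0$ and a unique real-analytic $a(\varepsilon)$ with $a(0)=0$ and $G(\varepsilon,a(\varepsilon))=0$. Uniqueness holds within a neighbourhood of $a=0$, which is the sense in which the statement should be read.

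Differentiating the identity $G(\varepsilon,a(\varepsilon))=0$ at $0$ gives
\[
 \int q_0\,u\,C+\Big(\int q_0\,CC^{\top}\Big)a'(0)=0 .
\]
The first term vanishes by the hypothesis $\int q_0uC=0$, and the Gram matrix is invertible, so $a'(0)=0$.

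The remaining properties follow directly:
\begin{itemize}
 \item Positivity of $q_\varepsilon$ is immediate from the exponential.
 \item The bound $\|\log(q_\varepsilon/q_0)\|_\infty\le|\varepsilon|\|u\|_\infty+|a(\varepsilon)|\,\|C\|_\infty$ tends to $0$ as $\varepsilon\to0$, by continuity of $a$.
 \item $\partial_\varepsilon q_\varepsilon|_0=q_0\big(u+a'(0)\cdot C\big)=q_0u$.
\end{itemize}
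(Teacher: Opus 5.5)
Your proposal is correct and follows the paper's proof essentially step for step: you apply the analytic implicit function theorem to the moment map, use invertibility of the Gram matrix $\int q_0CC^{\mathsf T}$, and differentiate the identity at $\varepsilon=0$ to get $a'(0)=0$; you also spell out the holomorphic extension of the moment map and the linear independence of the cosines, which the paper leaves implicit. Your hedge on uniqueness is unnecessary, because $a\mapsto\int q_0e^{\varepsilon u+a\cdot C}C$ is the gradient of the strictly convex function $a\mapsto\int q_0e^{\varepsilon u+a\cdot C}$, so for each $\varepsilon$ the solution $a$ is unique in all of $\mathbb R^{M+1}$.
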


The $D_M$ optimiser lies in the $L^\infty$ ball by construction; the $L^2$ optimiser attaining
$B_M$ need not. The affine chart $q_h=q_0(1+h)$ with $h\in\mathcal T_M$ and $\|h\|_\infty<1$ matches
the first $M$ rates exactly at finite amplitude (Lemma~\ref{lem:linfam}); the exponential tilt below
is used where an analytic one-parameter family is wanted. The bounded optimiser may in addition be
taken smooth.

\begin{lemma}[Smooth legal directions approach the pointwise optimum]\label{lem:smooth}
For every $\eta>0$ there is a smooth even $u\in\mathcal T_M$ with $\|u\|_\infty\le1$ and
$\int q_0u\Psi\ge D_M-\eta$. In particular $D_M>0$ supplies a smooth legal direction of strictly
positive response.
\end{lemma}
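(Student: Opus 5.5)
Start from a (near-)maximiser for $D_M$: by Proposition~\ref{prop:bm}(b), $D_M=\sup\{\int q_0\Psi u:\ u\in\mathcal T_M,\ \|u\|_\infty\le1\}$. Choose a bounded even $u_0\in\mathcal T_M$ with $\|u_0\|_\infty\le1$ and $\int q_0\Psi u_0\ge D_M-\eta/3$. If $\Psi$ is not already even, replace $u_0$ by its even part. This keeps the budget and the constraints, since $C$ and $q_0$ are even. It also keeps the response, because $\Psi$ of~\eqref{eq:Psiharm} is even. The task is then to smooth $u_0$ without losing more than $\eta$ of the response and without leaving either $\mathcal T_M$ or the unit ball.

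\textbf{Step 1: mollify.} Mollify on the circle with an even nonnegative mollifier $\rho_\delta$ of unit mass, setting $u_\delta=\rho_\delta*u_0$. Then $u_\delta$ is smooth and even. Because $\rho_\delta\ge0$ with unit mass, $\|u_\delta\|_\infty\le\|u_0\|_\infty\le1$. By Lebesgue's theorem $u_\delta\to u_0$ almost everywhere, and the family is bounded by $1$. Since $q_0\Psi\in L^1$ (as $\Psi$ is bounded and continuous by Proposition~\ref{prop:response}), dominated convergence gives $\int q_0\Psi u_\delta\to\int q_0\Psi u_0$. By the same argument $\int q_0u_\delta C\to0$.

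\textbf{Step 2: restore the constraints.} Convolution does not preserve the weighted moments $\int q_0 uC$, because $q_0$ is not constant. So $u_\delta$ must be projected back onto $\mathcal T_M$ by a smooth finite-rank correction. Pick smooth even functions $e_0,\dots,e_M$, for instance $e_k=\cos k\varphi$, such that the Gram matrix $G_{jk}=\int q_0C_je_k$ is invertible. With $e_k=\cos k\varphi$ this is the Gram matrix of $V_M$ in $L^2(q_0)$, which is positive definite since $q_0>0$. Set
\[
 \tilde u_\delta=u_\delta-\sum_k c_k(\delta)e_k,\qquad c(\delta)=G^{-1}\!\int q_0u_\delta C\to0 .
\]
Then $\tilde u_\delta\in\mathcal T_M$ is smooth and even, and $\|\tilde u_\delta-u_\delta\|_\infty\le K|c(\delta)|\to0$.

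\textbf{Step 3: restore the budget.} The correction may push the sup norm slightly above one, so rescale: $u=\tilde u_\delta/\max(1,\|\tilde u_\delta\|_\infty)$. This stays in $\mathcal T_M$, since $\mathcal T_M$ is a linear subspace. Its response differs from $\int q_0\Psi u_\delta$ by at most $O(|c(\delta)|)\cdot\int q_0|\Psi|$ plus a factor tending to one. Choosing $\delta$ small makes the total loss below $\eta$.

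The "in particular" follows by taking $\eta<D_M$. Also, $D_M$ is attained as a supremum in Proposition~\ref{prop:bm}(b), so $D_M>0$ gives a near-maximiser with positive response.

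The only delicate point is the constraint restoration in Step 2: exact preservation of the weighted moments must be combined with the sup-norm budget. This is handled by making the correction small in $L^\infty$ and then rescaling, rather than by trying to mollify inside $\mathcal T_M$ directly.
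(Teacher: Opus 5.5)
Your proof is correct and follows the same route as the paper's. The paper smooths a bounded direction by a nonnegative even approximate identity; it uses Fejér means, which additionally make $u$ an even trigonometric polynomial. It then restores the matched moments by subtracting $(G^{-1}\int q_0 v_n C)\cdot C$ with $G=\int q_0CC^{\mathsf T}$, and rescales by $1+\|a_n\cdot C\|_\infty$ to stay in the unit ball.

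Starting from a near-maximiser rather than an exact optimiser spares you the existence of an optimiser. In your opening paragraph, ``if $\Psi$ is not already even'' should read ``if $u_0$ is not already even''.
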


\begin{proof}
Start from a bounded optimiser $u_*$. Its Fej\'er means $v_n$ are even trigonometric polynomials
with $\|v_n\|_\infty\le1$ converging in $L^1$, and $\Psi$ is bounded by
Proposition~\ref{prop:response}, so both the moment errors and the objective error vanish. With
$G=\int q_0CC^{\mathsf T}$ invertible, set $a_n=G^{-1}\int q_0v_nC$ and
$u_n=(v_n-a_n\cdot C)/(1+\|a_n\cdot C\|_\infty)$: these are smooth, exactly matched, bounded by
one, and approach the optimum.
\end{proof}

\begin{proof}[Proof of Lemma~\ref{lem:tilt}]
Define
$\Xi(a,\varepsilon)=\int q_0e^{\varepsilon u+a\cdot C}C\,d\varphi-\big(1,\hat q_{0,1},\dots,\hat q_{0,M}\big)$.
Then $\Xi(0,0)=0$ and $\partial_a\Xi(a,\varepsilon)=\int q_{a,\varepsilon}CC^{\mathsf T}d\varphi$,
the Gram matrix of $1,\cos\varphi,\dots,\cos M\varphi$ against the strictly positive weight
$q_{a,\varepsilon}=q_0e^{\varepsilon u+a\cdot C}$. These $M+1$ functions are linearly independent,
so the Gram matrix is positive definite and hence invertible. The analytic implicit function
theorem gives a unique analytic $a(\varepsilon)$ near $0$ with $\Xi(a(\varepsilon),\varepsilon)=0$,
that is, normalisation together with the $M$ matched cosine moments. Positivity is immediate since
the exponential is positive and $q_0>0$, and
$\|\varepsilon u+a(\varepsilon)\cdot C\|_\infty\to0$ by continuity. Differentiating $\Xi=0$ at
$\varepsilon=0$ gives $\int q_0\big(u+a'(0)\cdot C\big)C=0$; the hypothesis $\int q_0uC=0$ leaves
$\big(\int q_0CC^{\mathsf T}\big)a'(0)=0$, and invertibility forces $a'(0)=0$. Hence
$\partial_\varepsilon q_\varepsilon|_0=q_0u$.
\end{proof}

\begin{lemma}[Explicit matched pair]\label{lem:explicit}
For $0<\epsilon<1$ the densities
$p_\pm(\varphi)=\tfrac{1}{2\pi}\big(1\pm\epsilon\cos((M{+}1)\varphi)\big)$ are strictly positive
and even, satisfy $\gamma^{\pm}_m=\alpha$ for $1\le m\le M$, and
$\gamma^{\pm}_{M+1}=\alpha(1\mp\epsilon/2)$. Adding a common rotational diffusion shifts every
$\gamma_m$ identically and preserves both the matching and the mismatch.
\end{lemma}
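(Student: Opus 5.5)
The statement to prove is Lemma~\ref{lem:explicit}. The proof is a direct computation of cosine moments, using only orthogonality of $\{\cos m\varphi\}$ on $[0,2\pi]$ and the definition $\gamma_m=\alpha(1-\widehat q_m)$ with $\widehat q_m=\int_0^{2\pi}q\cos m\varphi\,\dd\varphi$ from the Methods.

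\emph{Positivity, evenness and normalisation.} Since $|\cos((M+1)\varphi)|\le1$ and $0<\epsilon<1$, we have $p_\pm\ge(1-\epsilon)/2\pi>0$ pointwise. Evenness follows from the evenness of the cosine. Normalisation holds because $\int_0^{2\pi}\cos((M+1)\varphi)\,\dd\varphi=0$ for $M+1\ge1$.

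\emph{Moments.} Evaluate $\widehat p_{\pm,m}=\frac1{2\pi}\int_0^{2\pi}\cos m\varphi\,\dd\varphi\pm\frac{\epsilon}{2\pi}\int_0^{2\pi}\cos((M+1)\varphi)\cos m\varphi\,\dd\varphi$ for $m\ge1$. The first integral vanishes. The second equals $\pi\delta_{m,M+1}$ by orthogonality. Hence $\widehat p_{\pm,m}=0$ for $1\le m\le M$, which gives $\gamma^\pm_m=\alpha$. At $m=M+1$ we get $\widehat p_{\pm,M+1}=\pm\epsilon/2$, which gives $\gamma^\pm_{M+1}=\alpha(1\mp\epsilon/2)$. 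The two kernels therefore differ in $\gamma_{M+1}$ whenever $\epsilon>0$.

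\emph{Rotational diffusion.} For the last claim, note that a rotational diffusion $D_r\partial_\theta^2$ acts diagonally on the angular harmonics with eigenvalue $-D_rm^2$. It adds $D_rm^2$ to $\gamma_m$ for both kernels. Equal rates stay equal, and the difference $\gamma^+_{M+1}-\gamma^-_{M+1}=-\alpha\epsilon$ is unchanged.

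There is no real obstacle here. The only point needing care is the normalisation convention, since $\widehat q_m$ is defined with $\dd\varphi$ rather than $\dd\varphi/2\pi$; this is what produces the factor $\epsilon/2$.
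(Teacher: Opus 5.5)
Your proposal is correct and matches the paper's proof step for step: positivity and normalisation follow from $|\cos|\le1$ with $\epsilon<1$; cosine orthogonality gives $\hat p^{\pm}_m=0$ for $1\le m\le M$ and $\hat p^{\pm}_{M+1}=\pm\epsilon/2$; these are substituted into $\gamma_m=\alpha(1-\hat q_m)$; and rotational diffusion adds the same $D_r m^2$ to each $\gamma_m$ for both kernels. Your note that $\hat q_m$ is defined with $\mathrm{d}\varphi$ rather than $\mathrm{d}\varphi/2\pi$, which is what produces the factor $\epsilon/2$, is the one point worth making explicit, and you handle it correctly.
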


\begin{proof}
Positivity and normalisation are immediate for $\epsilon<1$; at the endpoint $\epsilon=1$ the
density vanishes at $\varphi=0$ and strict positivity fails, so the endpoint is excluded. Cosine orthogonality gives
$\hat p^{\pm}_m=0$ for $1\le m\le M$ and $\hat p^{\pm}_{M+1}=\pm\epsilon/2$; insert into
$\gamma_m=\alpha(1-\hat q_m)$. Rotational diffusion contributes $D_r m^2$ to every $\gamma_m$
identically.
\end{proof}

Lemma~\ref{lem:explicit} shows that the matched class is never a singleton: finite angular data do
not determine the generator, and by Theorem~\ref{thm:sharp} its two members differ in the bulk
dispersion exactly at order $k^{2M+2}$. Theorem~\ref{thm:sep} shows that within such a
class the \emph{interfacial verdict} can differ.

\begin{proof}[Proof of Theorem~\ref{thm:sep}]
The hypotheses are: (H1) at the base kernel $q_0$ and macroscopic parameters $p^\star$ there is a
nondegenerate travelling band in the sense of Section~\ref{app:kato}; (H2) the linearisation
carries an isolated interfacial eigenvalue of algebraic multiplicity one, at criticality
$\Real\lambda(q_0)=0$; (H3) the map $q\mapsto(F(q),c(q),\lambda(q))$ is $C^1$ in a neighbourhood of
$q_0$ in $L^\infty$.

Let $u^{\star}\in V_M^{\perp}$ with $\|u^{\star}\|_{L^2(q_0)}=1$ attain $B_M$, which exists by
Proposition~\ref{prop:bm}(a), and let $q_{\pm\varepsilon}$ be the tilted family of
Lemma~\ref{lem:tilt} along $u=\pm u^{\star}$. Then for all small $\varepsilon>0$:

\emph{Legality.} $q_{\pm\varepsilon}>0$ by Lemma~\ref{lem:tilt}, and
$\|\log(q_{\pm\varepsilon}/q_0)\|_\infty\le\log2$ once $\varepsilon$ is small enough, so both
kernels lie pointwise within a factor two of the base kernel.

\emph{Bulk equivalence.} $\gamma_m(q_{\pm\varepsilon})=\gamma_m(q_0)$ for $m\le M$ exactly, by
Lemma~\ref{lem:tilt}; by Theorem~\ref{thm:bulk} the two bulk density dispersions are identical
through $k^{2M}$. The macroscopic parameters are unchanged along the path by construction.

\emph{Opposite verdicts.} By (H1)--(H3) and Section~\ref{app:kato}, the map
$\varepsilon\mapsto\Real\lambda(q_{\pm\varepsilon})$ is differentiable at $0$ with derivative
$DR[\partial_\varepsilon q_{\pm\varepsilon}|_0]$. By Lemma~\ref{lem:tilt},
$\partial_\varepsilon q_{\pm\varepsilon}|_0=\pm q_0u^{\star}$, so by~\eqref{eq:DR}
\[
 \Real\lambda(q_{\pm\varepsilon})=\Real\lambda(q_0)\pm\varepsilon\!\int q_0u^{\star}\Psi+o(\varepsilon)
 =\pm\varepsilon B_M+o(\varepsilon),
\]
using $\Real\lambda(q_0)=0$ and $\int q_0u^{\star}\Psi=B_M$. Since $B_M>0$ by hypothesis, there is
$\varepsilon_1>0$ with $\Real\lambda(q_{+\varepsilon})>0>\Real\lambda(q_{-\varepsilon})$ for all
$0<\varepsilon<\varepsilon_1$: the band is linearly unstable to the interfacial mode for
$q_{+\varepsilon}$ and linearly stable for $q_{-\varepsilon}$, at the same macroscopic parameters.
\end{proof}

\begin{remark}
Theorem~\ref{thm:sep} is a statement about a neighbourhood of criticality. The $M=2$ witness of the
main text is the corresponding finite-amplitude statement: $\varepsilon$ is taken as large as the
$\log2$ budget allows and the two verdicts are established by direct simulation rather than by
linearisation. The design direction used there is the $L^\infty$-budget optimiser of
Proposition~\ref{prop:bm}(b) restricted to $\cos n\varphi$, $n=3,\dots,8$, which realises $65\%$ of
the theoretical optimum $D_2\log2$.
\end{remark}

\subsection{The nonlinear completion: proofs}
\label{app:nonlinear}

\begin{proof}[Proof of Proposition~\ref{prop:rank2}]
Sufficiency is Taylor's formula for the simple isolated eigenvalue, differentiable in the kernel
by the hypotheses of Section~\ref{app:kato}. For necessity, let a predictor read first-order
measurements with derivatives $H_1,\dots,H_s$ at $q_0$ and reconstruct $\lambda$ by a map that is
differentiable at the measured value. The chain rule bounds the rank of the composed derivative by
$s$. If $\chi_R$ and $\chi_I$ do not both lie in the real span of $H_1,\dots,H_s$, there is a
direction $h$ in the common nullspace of the $H_j$ along which the measurements change only at
second order while $\lambda$ changes at first order, so no such reconstruction can be correct to
first order. When the target map has rank two, $s=1$ is therefore impossible, and $H_1=\chi_R$,
$H_2=\chi_I$ attains it. If $\chi_R$ and $\chi_I$ are
proportional the target map has rank one and a single measurement suffices, and if both vanish the
first-order argument is silent and the first nonvanishing jet governs. The statement is about
regular local information; it says nothing about discontinuous encodings.
\end{proof}

\begin{proof}[Proof of Proposition~\ref{prop:observable}]
In the mass-constrained translation slice, write the reduced dynamics in polar form
$z=re^{i\theta}$, so that $\dot r=r[R+\tfrac12\Real G_0\,r^2+O(\|h\|r^2+r^4)]$ and
$\dot\theta=\omega+\tfrac12\Imag G_0\,r^2+\cdots$. On the side $R>0$ the implicit function theorem
applied to $r^2$ gives $r_*^2=-2R/\Real G_0+O(\|h\|^2)$, and $R=\chi_R+O(\|h\|^2)$ by
Proposition~\ref{prop:rank2}; substituting into $\dot\theta$ gives~\eqref{eq:freq}. The leading
physical perturbation of the band is $zq+\overline{zq}$ with $q$ the right critical mode, whose
period-averaged squared density norm is $2Q_\rho^2r_*^2$; the second harmonic and the mean
displacement enter the time-mean-subtracted power only at fourth order, and the change of the
observation map with the kernel contributes at order $\|h\|r_*^2$. Combining gives
$I_\rho=2Q_\rho^2r_*^2+O(\cdot)=K_\rho\chi_R+O(\|h\|^2)$. Rescaling $q$ by a complex constant
multiplies $Q_\rho^2$ and $G_0$ by the same positive factor, so $K_\rho$ and
$\Imag G_0/(4Q_\rho^2)$ are invariant, which is why they, and not the bare Lyapunov coefficient,
are the quantities compared with measurement.
\end{proof}

\begin{proof}[Proof of Corollary~\ref{cor:power}]
On the matched class the attainable $\chi_R$ ranges over $[-\varepsilon D_M,\varepsilon D_M]$ by
Proposition~\ref{prop:bm}. Directions with $\chi_R<0$ give a locally stable steady band, hence
$I_\rho=0$; directions approaching $\varepsilon D_M$ give $I_\rho\to K_\rho D_M\varepsilon$ by
Proposition~\ref{prop:observable}. A predictor reading only the matched rates returns one number
on the whole class, so its worst-case error is at least half the diameter, attained at the
midpoint, which is the power statement. For the root-mean-square amplitude
$A_\rho=\sqrt{I_\rho}$ the same interval becomes $[0,\sqrt{K_\rho D_M\varepsilon}]$ and the half
diameter is $\tfrac12\sqrt{K_\rho D_M}\,\varepsilon^{1/2}$; the expansion is regular because the
leading maximum is strictly positive. The square-root law is the ordinary Hopf scaling, and the
amplitude vanishes continuously as $\varepsilon\to0$, so an arbitrarily small kernel change
produces no finite jump.
\end{proof}

\noindent These three statements are conditional in the same way as
Proposition~\ref{prop:interval}: they assume a simple Hopf pair, stability of the rest of the
spectrum in the constrained slice, enough smoothness for a local centre manifold, and a uniform
second-order remainder over the class. Simplicity of the Hopf pair and the smoothness used in
the reduction are checked at the reference point.

\emph{Where the rest of the spectrum can be.}

\begin{proposition}[Spectral structure of the one-cell operator]\label{prop:essential}
Let the travelling wave and its frozen coefficients be regular in $x$, let $q\in L^1$ be normalised,
and let $D_t,\alpha>0$. Then the spectrum of the linearisation in $\{\Real z>-\alpha\}$ consists
only of isolated eigenvalues of finite algebraic multiplicity, and for every $0<\sigma<\alpha$ only
finitely many eigenvalues satisfy $\Real z\ge-\sigma$.
\end{proposition}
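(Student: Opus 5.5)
The plan is to split the linearisation into a part whose spectrum lies to the left of $\Real z=-\alpha$ and a compact perturbation. Then the analytic Fredholm theorem gives discreteness in $\{\Real z>-\alpha\}$.

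Write the linearised operator on one Bloch cell, in the comoving frame, as $L=L_0+K$. Here
\[
L_0F_s=-\partial_\xi\big(v_s\cos\theta\,F_s\big)+c\,\partial_\xi F_s+D_t\partial_\xi^2F_s-\alpha F_s ,
\]
where the turning operator contributes the loss $-\alpha F_s$ (using $\int q=1$) and the frozen coefficient $v_s$ is evaluated on the band. The remainder $K$ collects two pieces: the gain term $\alpha\int q(\varphi)F_s(\theta-\varphi)\,d\varphi$, and the speed-feedback term $-\partial_\xi(\cos\theta\,F^{\rm band}_s\,\partial v_s/\partial\tilde\rho_u\,[K_{\rm sense}*\delta\rho_u])$.

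\textbf{Step 1: the resolvent of $L_0$.} The operator $L_0$ acts fibrewise in $\theta$ as a one-dimensional uniformly parabolic operator in $\xi$ on the periodic cell, with regular coefficients and shift $-\alpha$. A $\theta$-uniform energy estimate shows that $\Real z>-\alpha$ lies in the resolvent set. The transport part is skew up to a bounded multiplication term $\partial_\xi v_s$, which is absorbed, so the estimate may be sharpened to $\Real z>-\alpha+$ (a bounded correction). To reach exactly $-\alpha$, I would use that $D_t\partial_\xi^2$ controls first-order terms by interpolation. For large $|z|$ this gives a bound $\|(z-L_0)^{-1}\|\lesssim 1/(\Real z+\alpha)$. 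More importantly, in every fixed $\theta$-fibre the resolvent maps into $H^1_\xi$, and on a compact cell this is compact in $\xi$ but not in $\theta$.

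\textbf{Step 2: compactness of $K(z-L_0)^{-1}$ in $\{\Real z>-\alpha\}$.} The gain operator is convolution in $\theta$ with $q\in L^1$. Convolution with an $L^1$ kernel on the circle is a norm limit of convolutions with trigonometric polynomials, and each of those has finite rank in $\theta$. Composed with the $\xi$-smoothing of the resolvent, each approximant is compact; the norm limit is then compact as well. The speed-feedback term factors through $\delta\rho=\int F\,d\theta/2\pi$, which has rank one in $\theta$, and through the smoothing sensing convolution. After the parabolic gain of one derivative, this term is also compact. Hence $z\mapsto K(z-L_0)^{-1}$ is an analytic compact-operator family on $\{\Real z>-\alpha\}$. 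Its norm tends to $0$ as $\Real z\to\infty$, since the resolvent bound decays.

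\textbf{Step 3: discreteness, then finiteness.} Write $z-L=(I-K(z-L_0)^{-1})(z-L_0)$. The analytic Fredholm theorem on the connected set $\{\Real z>-\alpha\}$ then shows that $I-K(z-L_0)^{-1}$ fails to be invertible only at isolated points with finite-rank residues. These are eigenvalues of finite algebraic multiplicity. For finiteness in $\{\Real z\ge-\sigma\}$, I need the eigenvalues to stay out of the region $|\Imag z|$ large. I would show $\|K(z-L_0)^{-1}\|<1$ for $\Real z\ge-\sigma$ and $|z|$ large. For the gain part this uses the bound $\|(z-L_0)^{-1}\|\le 1/(\Real z+\alpha)$ together with a refined high-frequency estimate. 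In the fibrewise Fourier variable, the symbol is $z+\alpha+D_tk^2+ik(v\cos\theta-c)$. For $|\Imag z|$ large this is large except on a thin set of $(k,\theta)$, and that set is made small after $\theta$-averaging against the $L^1$ kernel via a Riemann--Lebesgue-type argument. Only a compact portion of the strip then remains, and it contains finitely many isolated points.

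The main obstacle is Step 3's high-frequency bound. Pure transport in $\theta$ yields no decay, so the smallness must come from the dispersion of $k v\cos\theta$ across angles combined with the $\theta$-integration in the gain kernel, in the manner of an averaging lemma. This is where $D_t>0$ and the regularity of the wave enter. If the direct estimate fails, I would fall back on squaring: I would show $(K(z-L_0)^{-1})^2$ is small for large $|\Imag z|$, which is the standard route for kinetic operators with velocity averaging.
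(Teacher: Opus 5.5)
Your splitting $L=L_0+K$ matches the paper's $T+Q+B$, with $Q$ the gain and $B$ the speed feedback inside your $K$. So do your compactness argument for the gain by angular truncation (using $\hat q_m\to0$, or $L^1$-approximation of $q$), the factorisation of the feedback through scalar density fields, and the appeal to analytic Fredholm theory.

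\textbf{Step 1 fails as proposed.} Interpolation cannot bring the resolvent half-plane down to $\Real z>-\alpha$. With $w=v_s\cos\theta-c$, the unweighted identity is $\Real\langle (L_0+\alpha)F,F\rangle=-D_t\|\partial_\xi F\|^2-\tfrac12\int\partial_\xi w\,|F|^2$. Rewriting the last term as $\Real\int w\,\partial_\xi F\,\bar F$ and interpolating still leaves $+\|w\|_\infty^2\|F\|^2/(4D_t)$. This loss is genuine, not technical: for small $D_t$, a bump supported where $\partial_\xi w<0$ gives $\Real\langle (L_0+\alpha)F,F\rangle>0$. So no energy estimate in unweighted $L^2$ reaches $-\alpha$.

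The missing idea is the paper's weight. Let $\pi_\theta>0$ be the invariant density of $D_t\partial_\xi^2-\partial_\xi(w\,\cdot)$ on the cell. Its flux $D_t\partial_\xi\pi_\theta-w\pi_\theta=j_\theta$ is constant, and for $F=\pi_\theta h$ one finds $\Real\int (L_0+\alpha)F\,\bar F/\pi_\theta=-D_t\int\pi_\theta|\partial_\xi h|^2\le0$, because the flux term integrates to zero. Since $\pi_\theta$ is bounded above and below uniformly in $\theta$, this weighted norm is equivalent to $L^2$. It gives $\|(z-L_0)^{-1}\|\le C/(\Real z+\alpha)$ on the whole half-plane; the paper phrases the same step as conjugation to a Markov semigroup plus Jensen's inequality.

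\textbf{Step 3 is left unfinished, and the obstacle you name is absent when $D_t>0$.} The first-order part $\partial_\xi(w\,\cdot)$ is infinitesimally bounded relative to $D_t\partial_\xi^2$, uniformly in $\theta$. The fibre operators are therefore uniformly sectorial, and $\|(z-L_0)^{-1}\|\le C/|z|$ on $\{\Real z\ge-\sigma,\ |z|\ge R\}$. Your own symbol shows the same thing: where $kw$ cancels $\Imag z$ one has $|k|\ge|\Imag z|/\|w\|_\infty$, so $D_tk^2\gtrsim(\Imag z)^2$ and there is no thin resonant set to average over.

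Hence $\|Q(z-L_0)^{-1}\|\le\alpha\|q\|_{L^1}C/|z|$. Likewise $\|B(z-L_0)^{-1}\|\to0$, because the sensing convolution absorbs the outer $\partial_\xi$, or because $\|\partial_\xi(z-L_0)^{-1}\|\lesssim|z|^{-1/2}$. So $I-K(z-L_0)^{-1}$ is invertible outside a compact subset of $\{\Real z>-\alpha\}$, and analytic Fredholm theory leaves only finitely many eigenvalues with $\Real z\ge-\sigma$.

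The bound $1/(\Real z+\alpha)$ you recorded does not decay along vertical lines, which is why this step would not close. Averaging lemmas and squaring are tools for the $D_t=0$ kinetic problem, and as sketched they do not supply the estimate. The paper's sketch does not spell this step out either; it rests on the uniform ellipticity of $L_0+\alpha$ that the paper records.
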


\begin{proof}[Sketch]
Split $\mathcal L=T+Q+B$ with $T_s=D_t\partial_x^2-\partial_x[(v_s\cos\theta-c)\,\cdot\,]-\alpha I$,
$Q=\alpha q*_\theta$ the angular gain and $B$ the density-induced speed feedback. At each fixed
angle $T_s+\alpha I$ is a uniformly elliptic conservative Fokker--Planck generator on the spatial
circle; its invariant density is positive with uniform bounds, and conjugating by it turns the
semigroup into a Markov operator, so by Jensen's inequality the direct-integral semigroup of $T$ has
growth bound $-\alpha$ and its resolvent exists in $\Real z>-\alpha$. Both $Q$ and $B$ are
relatively compact with respect to $T$: since $\hat q_m\to0$, the angular truncations $Q_N$ converge
in operator norm and each $Q_N(z-T)^{-1}$ is compact by the spatial Rellich embedding, while
$B(z-T)^{-1}$ factors through two scalar $H^2_x$ fields. Analytic Fredholm theory gives the
conclusion.
\end{proof}

\noindent The proposition concerns the continuum operator; transferring the count to a finite
truncation requires control of the truncation and resolvent error.

\emph{The uniform remainder.} Once local $C^2$ dependence is proved on a Banach parameter
neighbourhood, a finite uniform $H$ exists on a smaller neighbourhood: continuity of the second
derivative at the origin suffices, with no compactness of an infinite-dimensional ball. A number on
a given budget follows from an a posteriori contraction. Preconditioning the joint
travelling-wave/eigenpair system
as $\mathcal P=A\mathcal E$, if $\|I-\mathcal P_y\|\le z<1$ and $\|\mathcal P_h\|\le b$ on the chosen
solution ball and kernel budget, with second-derivative bounds $c_{yy},c_{yh},c_{hh}$ and the
residual inequality that maps the ball into itself, then $J=b/(1-z)$ and
\begin{equation}\label{eq:Hcert}
 H\le\|P_R\|\,\frac{c_{hh}+2c_{yh}J+c_{yy}J^2}{1-z},
\end{equation}
$P_R$ being the map extracting the real part of the eigenvalue. The second derivative
in~\eqref{eq:Hcert} is the joint one, mixed directions included: for $R(x,y)=Cxy$ the second
derivative vanishes along each axis separately while the mixed term is of size $C$, so the bound is
taken over the joint Hessian.

\suppnote{Inverse design and local response capacity}

\noindent The closed-form initial design, the support function of the set of responses reachable
within a budget $\varepsilon$, and the local inverse are
\begin{equation}\label{eq:initialinverse}
 a^{(0)}=S^{-1}\begin{pmatrix} -gI_*/(4Q)-R_0\\[2pt] \Omega_*-\omega_0-bI_*/(4Q)\end{pmatrix},
\end{equation}

\begin{equation}\label{eq:reach}
 \sigma_{\mathcal K_\varepsilon}(\zeta)=\varepsilon\,D_M\big(\zeta_1\Psi_R+\zeta_2\Psi_I\big),
\end{equation}

\begin{theorem}[Local observable-prescribed inverse]\label{thm:inverse}
In the fixed-mass translation slice, let the reference state be regular with a simple pair
$\pm i\omega_0$, $\omega_0>0$, and no other imaginary spectrum, and let $Q>0$ and $S$ be
invertible. Then for every sufficiently small $I_*>0$ and $\eta=\Omega_*-\omega_0$ there is a
locally unique small kernel parameter $a_*$ and a small periodic solution, unique up to temporal
phase, realising those observables, with
\begin{equation}
 a_*(I_*,\eta)=a^{(0)}(I_*,\eta)+O\big((I_*+|\eta|)^2\big),
\end{equation}
and the kernel remains legal on a small enough neighbourhood. The reduced solvability system for
$(a_1,a_2,s)$, $s$ the squared critical amplitude, has Jacobian determinant $2Q\det S$.
\end{theorem}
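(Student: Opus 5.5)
I will prove Theorem~\ref{thm:inverse} by a Lyapunov--Schmidt / centre-manifold reduction to a three-equation solvability system in $(a_1,a_2,s)$, and then apply the implicit function theorem at the origin. First, I restrict to the fixed-mass translation slice of Section~\ref{app:kato}. By the nondegeneracy of $J_b$ and the simplicity of $\pm i\omega_0$ with no other imaginary spectrum, there is a two-dimensional centre manifold depending $C^2$ on the two hidden kernel coordinates $a=(a_1,a_2)$. The family is the affine chart $q_0(1+a_1u_1+a_2u_2)$ with $u_j\in\mathcal T_M$, so the matched rates are kept exactly by Lemma~\ref{lem:linfam}.

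On the centre manifold, the normal form gives the reduced polar system already used in the proof of Proposition~\ref{prop:observable}:
\[
\dot r=r\big[\chi_R(a)+\tfrac12\Real G_0\,r^2+O(|a|r^2+r^4)\big],\qquad
\dot\theta=\omega_0+\chi_I(a)+\tfrac12\Imag G_0\,r^2+\cdots .
\]
Here $(\chi_R,\chi_I)(a)=S a+O(|a|^2)$, with $S$ the $2\times2$ response matrix of the two hidden directions. I write $g,b$ for the real and imaginary parts of the cubic coefficient, in the normalisation of \eqref{eq:initialinverse}. Setting $s=r^2$ and dividing by $r$ removes the trivial branch. The periodic orbits with prescribed observables are then the zeros of
\[
\mathcal G(a,s;I_*,\eta)=\Big(\chi_R(a)+\tfrac12 g\,s+\ldots,\ \ \chi_I(a)+\tfrac12 b\,s-\eta+\ldots,\ \ 2Qs-I_*+\ldots\Big).
\]
The third component is the power relation $I=2Q_\rho^2r^2+O(\cdot)$ from Proposition~\ref{prop:observable}, with $Q$ standing for $Q_\rho^2$. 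The dots are $C^1$ terms of higher order in $(a,s)$ that vanish at the origin together with their first derivatives. I will check that the curvature corrections of the observation map enter only at order $|a|s$ or $s^2$, as argued there.

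The Jacobian of $\mathcal G$ in $(a,s)$ at the origin is block triangular:
\[
\begin{pmatrix} S & \tfrac12(g,b)^{\mathsf T}\\ 0 & 2Q\end{pmatrix},
\]
so its determinant is $2Q\det S$, which is nonzero by hypothesis. The implicit function theorem then gives a locally unique $C^1$ solution $(a_*,s_*)(I_*,\eta)$ vanishing at zero. Solving the linear part gives $s=I_*/(2Q)$ and $Sa=(-gI_*/(4Q),\ \eta-bI_*/(4Q))$. After including the offset $R_0$ at the reference, this is exactly $a^{(0)}$ in \eqref{eq:initialinverse}. The error bound $O((I_*+|\eta|)^2)$ follows from $C^2$ regularity of $\mathcal G$ via a Taylor estimate of the implicit map. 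Because $s_*>0$ for $I_*>0$, a genuine periodic orbit exists, and it is unique up to the $S^1$ phase of $\theta$. Legality follows because $\|h\|_\infty\le C|a_*|\to0$, which keeps the kernel positive by Lemma~\ref{lem:linfam} once $|a_*|$ is small.

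The main obstacle is justifying that the reduction is $C^2$ jointly in $(a,s)$ within the constrained slice, with remainders of the stated order. This matters most for the observation map: $I$ is a time-averaged variance of the spatially aligned density, so it involves the phase projection. I would first try to handle it with the bordered system \eqref{eq:border} and Proposition~\ref{prop:essential}, which gives a spectral gap for the stable part and hence a standard $C^k$ centre manifold. I would then verify that the second harmonic and the mean shift contribute to $I$ only at order $s^2$, as claimed in the proof of Proposition~\ref{prop:observable}.
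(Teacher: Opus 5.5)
Your endgame is the paper's: the same three reduced equations as \eqref{eq:red1}--\eqref{eq:red3}, the same block-triangular Jacobian with determinant $2Q\det S$, the implicit function theorem, the linear solve that returns $a^{(0)}$, and legality from $\|h_{a_*}\|_\infty\to0$. Under the theorem's hypotheses $R_0=0$, so there is no offset to add. The gap is in how you obtain the reduced system. You announce a Lyapunov--Schmidt/centre-manifold reduction, but you actually read the system off the polar normal form on a centre manifold. You then assert that periodic orbits with the prescribed observables are \emph{exactly} the zeros of $\mathcal G$.

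That assertion holds only for the truncated, rotationally symmetric normal form. The true reduced vector field has $\theta$-dependent remainders, which has three consequences:
\begin{itemize}
\item a periodic orbit is not a circle $r=\mathrm{const}$, so $\dot r=0$ is not its defining condition;
\item its frequency is $2\pi/T$, not $\dot\theta$;
\item nothing in your construction makes the exact periodic-orbit conditions smooth functions of $s=r^2$.
\end{itemize}
The last point matters. A Poincar\'e-map displacement divided by $r$ is smooth in $r$ but in general not even in $r$. Meanwhile the power equation $I\approx2Qr^2$ is degenerate in $r$ at $r=0$, so the implicit function theorem genuinely needs $s$ as the unknown.

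The paper handles precisely this point. It performs the Lyapunov--Schmidt reduction on the time-periodic problem $\Omega\partial_\tau w=f_a(w)$, choosing a range projection that commutes with temporal shifts. $S^1$-equivariance then makes the reduced complex equation (after division by the critical coefficient) and the time-averaged power smooth functions of $s$. It assumes no rotational symmetry of the original dynamics. You should import that step, in one of two ways:
\begin{itemize}
\item carry it out on your centre manifold, or directly in the full slice, which also makes the infinite-dimensional centre-manifold theorem unnecessary;
\item or define $\mathcal G$ from the period-averaged return map and prove its smoothness in $s$ separately.
\end{itemize}

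Two smaller points. First, positivity of $s_*$ for every $I_*>0$ does not follow from the implicit function theorem alone. Because $\eta$ is independent of $I_*$, corrections of order $\eta^2$ in $s_*$ could dominate when $I_*\ll\eta^2$. Positivity comes instead from the fact that the power vanishes identically on the continued steady band. Hence the third equation factors as $I_*=s[2Q+O(\|a\|+|\eta|+|s|)]$, exactly as in \eqref{eq:red3}. Your remark that observation corrections enter only at order $|a|s$ or $s^2$ is this fact, and you should state it as the reason.

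Second, the bound $O((I_*+|\eta|)^2)$ needs a $C^2$ (or $C^{1,1}$) reduced map, not the $C^1$ remainders you first describe. Dividing by $r$ and passing to $s=r^2$ both cost derivatives, so a $C^2$ centre manifold is not enough. You need a correspondingly smoother reduction, which the smooth vector field permits.
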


\subsection{Proofs and solved designs}
\label{app:inverse}

\subsubsection{Proof of Theorem~\ref{thm:inverse}}

\begin{proof}
Continue the reference equilibrium smoothly in $a$ and centre the vector field on it. In the
periodic equation $\Omega\partial_\tau w=f_a(w)$ the reference operator $\omega_0\partial_\tau-A_0$
has only the critical first-harmonic kernel; fix the phase of its complex coefficient. Solving the
complementary periodic equation gives
\begin{equation}
 w=\sqrt s\,\big(re^{i\tau}+\bar re^{-i\tau}\big)+O\big(s+\|a\|\sqrt s+|\eta|\sqrt s\big),
\end{equation}
and the range projection can be chosen to commute with temporal shifts, so the reduced complex
equation, after division by its nonzero critical coefficient, and the time-averaged observable are
smooth functions of the squared amplitude $s$. Their leading equations are
\begin{align}
 0&=S_Ra+\tfrac12gs+O\big((\|a\|+|\eta|+|s|)^2\big),\label{eq:red1}\\
 0&=S_Ia-\eta+\tfrac12bs+O\big((\|a\|+|\eta|+|s|)^2\big),\label{eq:red2}\\
 I_*&=s\big[2Q+O(\|a\|+|\eta|+|s|)\big].\label{eq:red3}
\end{align}
This is the reduced periodic solvability system; the original dynamics is not assumed exactly cubic
or rotationally symmetric in its own coordinates. The derivative of
\eqref{eq:red1}--\eqref{eq:red3} with respect to $(a_1,a_2,s)$ is
\begin{equation}\label{eq:invdet}
 \mathcal J_*=\begin{pmatrix} S_{11}&S_{12}&g/2\\ S_{21}&S_{22}&b/2\\ 0&0&2Q\end{pmatrix},
 \qquad \det\mathcal J_*=2Q\det S\neq0 .
\end{equation}
The implicit function theorem supplies the local control and squared amplitude; \eqref{eq:red3}
makes $s$ positive for $I_*>0$ on a small neighbourhood because its bracket stays positive. Solving
the linear system gives~\eqref{eq:initialinverse} and Taylor expansion its stated error, the range
solution then supplying the full periodic field. Finally $\|h_{a_*}\|_\infty\to0$, which preserves
positivity and every matched moment.
\end{proof}

\noindent At $g=0$ the theorem still constructs a prescribed orbit, while the forward map from
control to power becomes singular and an exactly linear centre can carry different amplitudes at
one kernel; the nondegenerate cubic and stable-complement conditions are what give an autonomous,
locally attracting amplitude.

\subsubsection{Proof of the reachable set and the minimum budget}

\begin{proof}[Proof of Eq.~\eqref{eq:reach}]
The matched unit ball is weak-star compact and the represented response of
Proposition~\ref{prop:response} is weak-star continuous, so its finite-dimensional image
$\mathcal K_\varepsilon$ is compact, convex and symmetric. The support identity is the constrained
$L^\infty$--$L^1$ duality of Proposition~\ref{prop:bm} applied to the scalar response
$\zeta_1\Psi_R+\zeta_2\Psi_I$; the separating-hyperplane theorem gives membership and rescaling the
gauge. A direction of zero support carries an infinite budget.
\end{proof}

\noindent For a finite smooth basis $\{u_j\}$ the budget is a linear program,
$\min_{a,\tau}\tau$ subject to $Sa=d$ and $-\tau\le\sum_ja_ju_j(\varphi_k)\le\tau$, whose value
brackets the continuous pointwise optimum from above and below.

\subsubsection{The solved designs}

The reference response matrix and the cubic data used in~\eqref{eq:initialinverse} are
\begin{equation}
 S\simeq\begin{pmatrix}3.92085037\times10^{-2}&0\\ 1.91954921\times10^{-3}&3.89278553\times10^{-3}\end{pmatrix},
\end{equation}
$G_0=-7.99009106\times10^{-5}-6.59662952\times10^{-5}i$ and $Q=7.37609463\times10^{-6}$, with
$\operatorname{cond}S\simeq10.10$ in the control normalisation used here. The tangent systems behind
the two columns of $S$ were re-solved for this construction, to relative residuals
$1.8\times10^{-13}$ and $1.3\times10^{-13}$.

\begin{table}[htbp]\centering
\caption{\textbf{Designed kernel parameters and budgets.} The pointwise bound uses a dense angular sample
together with the trigonometric derivative bound
$\|h\|_\infty\le\max_k|h(\varphi_k)|+(\pi/n_\varphi)\sum_mm|c_m|$, so it bounds $h$ off the sample
points as well.}\label{tab:designpar}
\footnotesize
\begin{tabular}{lrrrr}
\toprule
design & $a_1$ & $a_2$ & $\|q/q_0-1\|_\infty$ & minimum linear budget\\
\midrule
B & $0.0344346014$ & $0.0173879710$ & $5.18\%$ & $3.52\%$\\
P & $0.0413200347$ & $0.0716322535$ & $11.30\%$ & $6.42\%$\\
F & $0.0344528150$ & $0.0686976869$ & $10.32\%$ & $6.23\%$\\
\bottomrule
\end{tabular}
\end{table}

\begin{table}[htbp]\centering
\caption{\textbf{Error of the closed form~\eqref{eq:initialinverse} used without the corrector, and its
quadratic scaling.} The upper block solves the fixed-kernel periodic problem at each designed
kernel; the lower block scales the design direction of $\mathrm B$ by $t$ and divides the resulting
power error by $t^2$.}\label{tab:designerr}
\footnotesize
\begin{tabular}{lrr}
\toprule
design & relative power error & frequency error\\
\midrule
B & $0.2925\%$ & $-5.6933\times10^{-6}$\\
P & $0.2962\%$ & $-7.2015\times10^{-6}$\\
F & $0.2389\%$ & $-4.8639\times10^{-6}$\\
\midrule
$t$ & power error$/t^2$ & frequency error$/t^2$\\
\midrule
$1$ & $1.46245\times10^{-6}$ & $-5.69331\times10^{-6}$\\
$1/2$ & $1.45721\times10^{-6}$ & $-5.67170\times10^{-6}$\\
$1/4$ & $1.45467\times10^{-6}$ & $-5.66081\times10^{-6}$\\
\bottomrule
\end{tabular}
\end{table}

\noindent The joint solve carries the kernel parameters and the whole space--time--angle field as
unknowns, using exact Jacobian--vector products of the original speed, sensing and turning
operators and a preconditioner built from temporal harmonic blocks, the angular block
tridiagonal elimination and a low-rank density feedback; the two-dimensional Schur complement acts
on the design parameters alone. Increasing the temporal collocation from $9$ to $13$ to $17$ nodes
takes the kinetic defect measured \emph{off} the collocation grid from $1.70\times10^{-3}$ to
$5.33\times10^{-5}$ to $1.60\times10^{-6}$, alongside a collocation-point root-mean-square residual
of order $10^{-13}$. The batched vector field agrees with the original angular operator to
$8.4\times10^{-13}$ relative, and a central-difference check of the full periodic Jacobian
converges at second order.

Reference factors computed once are reused across targets: building them took $34.3$~s, after which
the Newton corrections for the three designs took $7.2$, $8.1$ and $7.5$~s, three Newton steps each,
with each target restarted from the fixed normal-form data rather than from the previous solution.

\suppnote{Restricted implementation and locked relaxation}

\begin{proposition}[Global moments relax at the matched rates]\label{prop:lock}
Every sufficiently regular solution of the kinetic equation on the periodic cell obeys
\begin{equation}\label{eq:lock}
 \dot A_{s,m}=-\gamma_mA_{s,m}
\end{equation}
exactly: transport and diffusion integrate to zero over the cell, and the turning generator is
diagonal on angular harmonics. Periodic and relative-periodic states therefore carry
$A_{s,m}=0$ for $m\ge1$, and the period map has exact left factors $e^{-\gamma_mP}$, unmoved by
phase alignment.
\end{proposition}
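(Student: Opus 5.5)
The argument reduces everything to the angular harmonic structure of the turning generator, applied to cell-integrated moments. Define the global moments
\[
A_{s,m}(t)=\int_0^L\!\!\int_0^{2\pi}F_s(x,\theta,t)\,e^{-im\theta}\,\dd\theta\,\dd x ,
\]
and differentiate under the integral using the kinetic equation of the Methods. There are three terms.

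\emph{Transport.} Because the speed $v_s$ depends on $x$ (through the sensed densities) but not on $\theta$, the transport term is $-\partial_x(v_s\cos\theta\,F_s)$. For each fixed $\theta$ it is an exact $x$-derivative of a periodic, sufficiently regular function, so its cell integral vanishes. The factor $e^{-im\theta}$ does not depend on $x$, so this holds for every $m$.

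\emph{Diffusion.} The term $D_t\partial_x^2F_s$ integrates to zero over the periodic cell for the same reason.

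\emph{Turning.} The gain term is a convolution in $\theta$: $\alpha\int q(\varphi)F_s(\theta-\varphi)\,\dd\varphi$. Taking its Fourier coefficient gives $\alpha\,\widehat q^{\,c}_m\,\widehat F_{s,m}$, where $\widehat q^{\,c}_m=\int q\,e^{-im\varphi}\,\dd\varphi$. Since $q$ is even, this equals the cosine moment $\widehat q_m$ defined in the Methods, so the odd (sine) part of the kernel contributes nothing. The loss term contributes $-\alpha\widehat F_{s,m}$, using the normalisation $\int q=1$. The two combine to $-\alpha(1-\widehat q_m)\widehat F_{s,m}=-\gamma_m\widehat F_{s,m}$. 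Integrating over $x$ then gives $\dot A_{s,m}=-\gamma_mA_{s,m}$. Fubini and differentiation under the integral are justified by the regularity assumption, which I would take to mean, for example, $F_s\in C^1_t(L^1)$ with $\partial_xF_s$ and $\partial_x^2F_s$ integrable.

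\emph{Consequences.} The ODE integrates to $A_{s,m}(t)=e^{-\gamma_mt}A_{s,m}(0)$. For $m\ge1$, Lemma~\ref{lem:gap} gives $\gamma_m>0$.
\begin{itemize}
\item For a time-periodic state of period $P$: $A_{s,m}(0)=e^{-\gamma_mP}A_{s,m}(0)$ with $e^{-\gamma_mP}\ne1$, so $A_{s,m}=0$.
\item For a relative-periodic state, $F(x,\theta,t+P)=F(x-cP,\theta,t)$. Spatial translation leaves the cell integral unchanged, so the same argument applies.
\item For the period map: $A_{s,m}$ is a bounded linear functional of the state, and $A_{s,m}\circ\Phi_P=e^{-\gamma_mP}A_{s,m}$. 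Hence $A_{s,m}$ is an exact left eigenvector (left factor) of the period map, and of its linearisation, with multiplier $e^{-\gamma_mP}$. Phase alignment by a translation $T_\xi$ does not alter this, because $A_{s,m}\circ T_\xi=A_{s,m}$.
\end{itemize}

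The main obstacle is the regularity needed to kill the boundary terms. For the states actually considered (periodic cell, smooth band with $D_t>0$), parabolic smoothing makes this routine. The one point to handle carefully is that the density-dependent speed depends on $x$ but not on $\theta$, which is what makes the transport flux an exact $x$-derivative at each fixed angle.
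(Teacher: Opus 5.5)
Your argument is correct and is essentially the paper's: cell integration removes the flux and diffusion terms at each angle, the even normalised kernel acts on the $m$-th harmonic with eigenvalue $-\gamma_m$, periodicity combined with pure decay forces $A_{s,m}=0$, and translation invariance of these functionals keeps the factors $e^{-\gamma_mP}$. The one step the paper states that you leave implicit is that $A_{s,m}$ also annihilates the temporal tangent $\partial_tF$ of the orbit (immediate, since $A_{s,m}\equiv0$ along it); this is what keeps the factor intact when the temporal neutral phase is removed, not only the spatial drift.
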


\begin{corollary}[Recovery ceiling]\label{cor:lock}
In any state norm that observes the moments $A_{s,m}$, $m\le M$, and over any admissible
perturbation class that moves them, a uniform asymptotic estimate $d(t)\le Ce^{-\rho t}d(0)$
requires $\rho\le\min_{m\le M}\gamma_m$.
\end{corollary}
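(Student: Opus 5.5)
The plan is to test the estimate against perturbations that excite only a single matched global moment. By Proposition~\ref{prop:lock}, $\dot A_{s,m}=-\gamma_mA_{s,m}$ holds exactly along every sufficiently regular solution, so $A_{s,m}(t)=e^{-\gamma_mt}A_{s,m}(0)$ with no dependence on the turning kernel beyond $\gamma_m$, on the speed law, or on the reference state.

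First I fix $m\le M$ attaining $\min_{m\le M}\gamma_m$. I take the reference state $F^\star$ in the class. For it to be a periodic or relative-periodic state, or a steady band, it must have $A_{s,m}(F^\star)=0$, again by Proposition~\ref{prop:lock}. By the hypothesis that the admissible class moves the moments, there are admissible perturbed initial data $F(0)$ with $A_{s,m}(F(0))\neq0$ and $d(0)$ as small as required. Since the moment map is linear and the norm observes it, I assume a bound $|A_{s,m}(F)-A_{s,m}(G)|\le c\,d(F,G)$ with $c>0$. Hence
\[
c\,d(t)\ge|A_{s,m}(t)-A_{s,m}(F^\star(t))|=e^{-\gamma_mt}|A_{s,m}(0)|.
\]
This holds for all $t\ge0$, including under phase alignment, because the moments are translation invariant and Proposition~\ref{prop:lock} states that the left factors are unmoved by alignment.

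Second, I combine this with the assumed estimate $d(t)\le Ce^{-\rho t}d(0)$:
\[
e^{-\gamma_mt}|A_{s,m}(0)|\le cC\,e^{-\rho t}d(0)\qquad\text{for all }t .
\]
Since $A_{s,m}(0)\neq0$, letting $t\to\infty$ forces $\rho\le\gamma_m$, and this is the required bound $\rho\le\min_{m\le M}\gamma_m$. If the reference is time-dependent, I apply the same argument to the difference of the two moment trajectories. That difference still satisfies the linear equation, because \eqref{eq:lock} is linear and holds for both solutions.

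The main obstacle is making ``observes the moments'' precise enough to give the lower Lipschitz bound on the specific moment. It also has to be compatible with the quotient by translations when $d$ is a phase-aligned distance. I would handle this by defining observation as continuity of the linear functional $F\mapsto A_{s,m}(F)$ with respect to $d$, and noting that $A_{s,m}$ is a global integral over the cell and hence invariant under the translation used for alignment.
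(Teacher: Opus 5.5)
Your proof is correct and follows the paper's argument. The ingredients are the same: the exact decay $A_{s,m}(t)=e^{-\gamma_m t}A_{s,m}(0)$, the vanishing of the moments at every spatial and temporal phase of the target orbit, and boundedness of the moment functional in the observing norm. Together they give $d(t)\ge c^{-1}e^{-\gamma_m t}|A_{s,m}(0)|$, which contradicts $d(t)\le Ce^{-\rho t}d(0)$ as $t\to\infty$ whenever $\rho>\gamma_m$. Your explicit choice of the minimising $m$, read with the class moving each matched moment, is what delivers $\min_{m\le M}\gamma_m$; the paper's ``for some $m\le M$'' leaves this implicit.
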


\subsection{Proofs and data}
\label{app:locked}
\subsubsection{Proof of Proposition~\ref{prop:lock} and Corollary~\ref{cor:lock}}

\begin{proof}[Proof of Proposition~\ref{prop:lock}]
Integrate the kinetic equation against $\cos(m\theta)$ over the cell. The flux and diffusion terms
are exact spatial divergences on $\mathbb T_L$ and integrate to zero at every angle. For an even
kernel, $\int q(\phi)[\cos m(\theta-\phi)-\cos m\theta]\,d\phi=(\hat q_m-1)\cos m\theta$, so the
turning term contributes $-\alpha(1-\hat q_m)A_{s,m}=-\gamma_mA_{s,m}$, which
is~\eqref{eq:lock}. On a periodic or relative-periodic state each $A_{s,m}$ is periodic in time and
satisfies pure decay, hence vanishes for $m\ge1$. The functionals $A_{s,m}$ annihilate the spatial
and temporal tangent directions of the orbit, being translation averages, so the dual action of the
period map retains the factors $e^{-\gamma_mP}$ after the neutral phases are removed.
\end{proof}

\begin{proof}[Proof of Corollary~\ref{cor:lock}]
Take an admissible perturbation with $A_{s,m}(0)\ne0$ for some $m\le M$. Every phase shift of the
target orbit carries zero moments, so the orbital distance in the observing norm is bounded below
by a constant multiple of $|A_{s,m}(0)|e^{-\gamma_mt}$. An estimate $d(t)\le Ce^{-\rho t}d(0)$ with
$\rho>\gamma_m$ contradicts this bound as $t\to\infty$.
\end{proof}

\subsubsection{Data for the restricted designs}
\begin{table}[htbp]\centering
\caption{\textbf{The two five-primitive designs after $A=256$ recalibration.} Both share
$\gamma_1=0.1$ and $\gamma_2=0.01722734397666847$; the exact rational enclosure is
$\gamma_2^{-1}\in[58.04725332902919048,\,58.04725332902919049]$. The exact kernel specification
defines $w_4,w_5$ as printed rationals and $w_1,w_2,w_3$ by the exact moment equations; the
floating-point weights below differ from the exact ones by less than
$1.4\times10^{-14}$.}\label{tab:mixture}
\footnotesize
\begin{tabular}{lrr}
\toprule
primitive ($\delta_j$, rad) & $w_j$ (design B) & $w_j$ (design P)\\
\midrule
$0.00$ & $0.358550895672$ & $0.355324500975$\\
$0.35$ & $0.504082130368$ & $0.511706720682$\\
$0.70$ & $0.105673585767$ & $0.097835031522$\\
$1.05$ & $0.028259014132$ & $0.032760094958$\\
$1.40$ & $0.003434374062$ & $0.002373651862$\\
\bottomrule
\end{tabular}
\end{table}
\begin{table}[htbp]\centering
\caption{\textbf{Recovery and leading Floquet data for design B.} Distances are orbital distances after
spatial alignment and temporal-phase minimisation, relative to their initial values, after $140$
periods. Ritz values are the leading nontrivial multipliers of the even-sector period map with
mass and both neutral phases removed.}\label{tab:recovery}
\footnotesize
\begin{tabular}{lr}
\toprule
quantity & value\\
\midrule
\multicolumn{2}{l}{\emph{residual distance ratio after 140 periods}}\\
radial $+20\%$ & $3.46\times10^{-4}$\\
radial $-20\%$ & $2.04\times10^{-4}$\\
density $10\%$ & $1.27\times10^{-4}$\\
density $20\%$, second orbit phase & $2.93\times10^{-4}$\\
density $20\%$, $A=512$ fixed kernel & $1.05\times10^{-4}$\\
\midrule
\multicolumn{2}{l}{\emph{Floquet data}}\\
Ritz values, first grid & $0.943042;\ 0.694917\ (\times2);\ 0.420597;\ -0.302926\pm0.216935i$\\
Ritz values, $A=512$ & $0.944029;\ 0.694928$\\
dominant exponent, $A=512$ & $-0.0027264$\\
measured decay of the $20\%$ perturbation & $-0.0027100$\\
exact factor $e^{-\gamma_2P}$ & $0.69491691$\\
\bottomrule
\end{tabular}
\end{table}
\begin{table}[htbp]\centering
\caption{\textbf{The fixed-frequency probability budget at a cap of $0.002$ on any single probability.}
The linear power ceiling is $2.5380042\times10^{-5}$; targets were fixed at $70\%$, $98\%$ and
$120\%$ of it before the nonlinear solves.}\label{tab:budget}
\footnotesize
\begin{tabular}{lrrl}
\toprule
target & predicted max change & realised max change & against the cap\\
\midrule
$70\%$ & $0.001400000$ & $0.00140002495$ & within\\
$98\%$ & $0.001960000$ & $0.00196004894$ & within\\
$120\%$ & $0.002400000$ & $0.00240007341$ & beyond\\
\bottomrule
\end{tabular}
\end{table}

\suppnote{Recovery, phase response and microscopic noise}

\noindent For the turning operator, the carr\'e du champ, its action on angular harmonics, the
resulting covariance of the global moments and the phase diffusion of the orbit are
\begin{equation}\label{eq:carre}
 \Gamma_q(a,b)=2D_t\,\partial_xa\,\partial_xb
 +\alpha\!\int\! q(\varphi)\,\Delta_\varphi a\,\Delta_\varphi b\,d\varphi,
 \qquad \Delta_\varphi a=a(x,\theta{+}\varphi)-a(x,\theta),
\end{equation}

\begin{equation}\label{eq:harmnoise}
 \Gamma_q(c_m,c_n)=\tfrac12\big([\gamma_m{+}\gamma_n{-}\gamma_{m+n}]\,c_{m+n}
 +[\gamma_m{+}\gamma_n{-}\gamma_{|m-n|}]\,c_{|m-n|}\big).
\end{equation}

\begin{equation}\label{eq:noise}
 \mathrm{Cov}\big(\mathcal A^N_m(t{+}\tau),\mathcal A^N_m(t)\big)=\frac{e^{-\gamma_m|\tau|}}{2N_s},
 \qquad
 \mathrm{Var}\Big[\frac1T\!\int_0^T\!\mathcal A^N_m\,dt\Big]
 =\frac{\gamma_mT-1+e^{-\gamma_mT}}{N_s\gamma_m^2T^2},
\end{equation}

\begin{equation}\label{eq:phasevar}
  \frac{d\,\mathrm{Var}\,\phi}{dt}\simeq K_\phi,\qquad
 N_{\rm tot}K_\phi=8.562\times10^{4}
\end{equation}

\begin{proposition}[Noise reads twice as many rates]\label{prop:noise2M}
Matching $\gamma_1,\dots,\gamma_M$ fixes the angular drift on harmonics through $M$. Equality of
the covariance~\eqref{eq:harmnoise} on those harmonics for \emph{every} angular distribution holds
if and only if the rates match through $2M$. Under a uniform angular marginal only the diagonal
terms survive, which is why~\eqref{eq:noise} is class-invariant.
\end{proposition}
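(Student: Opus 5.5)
The plan is to first re-derive the harmonic form \eqref{eq:harmnoise} of the carr\'e du champ \eqref{eq:carre}, and then read both directions of the equivalence off its coefficients. Take the test functions $a=\cos m\theta$, $b=\cos n\theta$ with $m,n\ge1$; they have no $x$-dependence, so only the turning part of $\Gamma_q$ contributes. Expand $\Delta_\varphi\cos m\theta\,\Delta_\varphi\cos n\theta$ by product-to-sum identities into cosines of $(m\pm n)\theta$, $(m\pm n)\theta+(m\pm n)\varphi$, $(m\theta+m\varphi)\pm n\theta$, and so on. Integrate against the even kernel $q$, using $\int q\cos j\varphi\,d\varphi=\hat q_j=1-\gamma_j/\alpha$; the odd sine terms drop by evenness of $q$. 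Collecting terms, the coefficient of $\cos(m+n)\theta$ is $\tfrac12[\gamma_m+\gamma_n-\gamma_{m+n}]$ and that of $\cos(m-n)\theta$ is $\tfrac12[\gamma_m+\gamma_n-\gamma_{|m-n|}]$, with $\gamma_0=0$. Averaging against the angular distribution then replaces $\cos j\theta$ by its harmonic $c_j$, which gives \eqref{eq:harmnoise}. The drift statement is Proposition~\ref{prop:lock}: the generator acts on $\cos m\theta$ by $-\gamma_m$, so matching $\gamma_1,\dots,\gamma_M$ fixes the drift on harmonics up to $M$.

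For the \emph{if} direction, take any $m,n\le M$. Both coefficients in \eqref{eq:harmnoise} involve only $\gamma_m,\gamma_n,\gamma_{m+n},\gamma_{|m-n|}$, and every index there is at most $2M$. Matching through $2M$ therefore makes the covariance identical for every distribution $(c_j)$.

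For the \emph{only if} direction, I would argue by induction on $j=M+1,\dots,2M$, assuming $\gamma_1,\dots,\gamma_{j-1}$ already match; the base case is the drift matching through $M$. Write $j=m+n$ with $1\le m,n\le M$, and evaluate on the distribution $\frac1{2\pi}(1+\epsilon\cos j\theta)$ with $0<\epsilon<1$. This distribution has $c_0$ and $c_j$ as its only nonzero harmonics. The index $|m-n|$ differs from $j$ because $n\ge1$, so the $c_{|m-n|}$ term carries either $c_0$ or a zero harmonic, and in either case its coefficient is already matched. Equality of covariances then forces $(\gamma_m+\gamma_n-\gamma_j)c_j$ to agree between the two generators. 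Since $c_j\neq0$, this gives $\gamma_j=\gamma'_j$ and closes the induction.

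For the final remark, a uniform marginal has $c_j=0$ for $j\ge1$. Only the $c_0$ term survives, and it is nonzero only when $m=n$, so off-diagonal covariances vanish. The diagonal reduces to $\tfrac12\cdot2\gamma_m c_0$, which depends on matched rates alone. Combined with the Ornstein--Uhlenbeck relaxation at rate $\gamma_m$ from \eqref{eq:lock}, this yields the class-invariant expressions \eqref{eq:noise}.

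The main obstacle I expect is bookkeeping in the product-to-sum expansion. The factors of $\tfrac12$, the $\gamma_0=0$ convention, and the normalization of $c_j$ must all come out consistently with \eqref{eq:harmnoise}. The key point is that the $\varphi$-independent cross terms cancel against the ``$-1$'' parts of $\hat q$, leaving exactly the combinations $\gamma_m+\gamma_n-\gamma_{m\pm n}$.
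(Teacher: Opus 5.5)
Your proposal is correct and follows the paper's proof: you derive the harmonic formula, get sufficiency because only indices $\le m+n\le 2M$ occur, get necessity by testing against $1+\epsilon\cos k\theta$ with $k=m+n$, and reduce the uniform-marginal case to the diagonal term $\gamma_m c_0$. The differences are cosmetic: you expand $\Delta_\varphi a\,\Delta_\varphi b$ directly where the paper uses $\Gamma(a,b)=G(ab)-aGb-bGa$, and your induction is unnecessary, because with $m,n\le M$ every index other than $j=m+n$ is already at most $M$ (the paper simply takes $m=M$, $n=k-M$).
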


\subsection{Fluctuation law, phase response and the odd sector}
\label{app:noise}
\subsubsection{Proof of the fluctuation law~\eqref{eq:noise}}

\begin{proof}
The angles are autonomous and independent across particles, and the turning generator acts on
$\cos(m\theta)$ with eigenvalue $-\gamma_m$, so the stationary autocovariance of one particle's
$\cos(m\theta_i)$ is $\tfrac12e^{-\gamma_m|\tau|}$, the variance $\tfrac12$ coming from the uniform
marginal. Averaging $N_s$ independent copies divides by $N_s$, which is the covariance
in~\eqref{eq:noise}; integrating it twice over $[0,T]$ gives the time-average variance.
\end{proof}

\subsubsection{Proof of Proposition~\ref{prop:noise2M}, and the phase-noise normalisation}

\begin{proof}[Proof of Proposition~\ref{prop:noise2M}]
The identity $\Gamma(a,b)=G(ab)-aGb-bGa$ for the turning generator $G$ and the cosine product
formula give~\eqref{eq:harmnoise}, whose coefficients involve rates through $m+n\le2M$ only;
this is sufficiency. For necessity, let the first $M$ rates agree and the covariance agree for
every positive angular density. For $k=M+1,\dots,2M$ take $m=M$, $n=k-M$: since all lower rates
agree, the covariance difference is $-\tfrac12\Delta\gamma_k\cos k\theta$, and testing against
densities proportional to $1+\epsilon\cos k\theta$ forces $\Delta\gamma_k=0$. Sine identities and
linearity extend the statement to arbitrary trigonometric observables. Under a uniform marginal
the off-diagonal harmonics average to zero and only the diagonal terms
$\Gamma_q(c_m,c_m)$ contribute, which retains rates through $m$ alone.
\end{proof}

\noindent\emph{Normalisation of~\eqref{eq:phasevar}.} With particle mass $2L/N_{\rm tot}$ and
species ratio $3{:}5$, the time-displacement covariance rate is
\begin{equation}\label{eq:Kt}
 K_t=\frac{2L}{N_{\rm tot}P}\int_0^P\!\!dt\sum_s\int_0^L\!\!dx\int_0^{2\pi}\!\frac{d\theta}{2\pi}\,
 F_s\,\Gamma_q(p_s,p_s),
\end{equation}
$p_s$ the phase covector pulled back through the instantaneous spatial alignment; radian phase
multiplies by $\Omega^2$, and the convention is $\mathrm{Var}\,\phi\simeq K_\phi t$. The covector
is obtained by reverse accumulation through the Lawson Runge--Kutta step, with the transpose
checked against the action of the Jacobian to rounding error, the leading left multiplier from Arnoldi
iteration, and normalisation against the orbit tangent holding to $1.0\times10^{-7}$ over the
period; the angular generator is evaluated from the continuous primitive moments on the padded
quadrature grid.

\begin{table}[htbp]\centering
\caption{\textbf{Same-target designs and their phase-noise coefficients.} All four solve the periodic
problem at the target of design B on the reference grid; the six-primitive family adds a sixth
component and re-solves the moment constraints. The last column re-evaluates the coefficient
difference on the refined ($A=256$, $N_x=96$) orbit, where the baseline coefficient is
$9.778\times10^{4}$. ref., reference design; n.c., not computed.}\label{tab:phasenoise}
\footnotesize
\begin{tabular}{lrrrr}
\toprule
rule & smallest probability & $N_{\rm tot}K_\phi$ & vs.\ B & refined vs.\ B\\
\midrule
five-primitive B & $4.62\times10^{-3}$ & $85617.36$ & ref. & ref.\\
six-primitive $e_{0.0005}$ & $5.0\times10^{-4}$ & $85663.46$ & $+0.0539\%$ & n.c.\\
six-primitive $e_{0.001}$ & $9.15\times10^{-4}$ & $85709.59$ & $+0.1077\%$ & $+0.1298\%$\\
six-primitive sharp & $3.50\times10^{-3}$ & $86200.60$ & $+0.6812\%$ & $+0.5486\%$\\
\bottomrule
\end{tabular}
\end{table}

\subsubsection{The odd angular sector contracts}

\begin{lemma}[Odd-sector contraction]\label{lem:odd}
With orientation-independent speeds and an even kernel $q\ge q_{\min}>0$, angular reflection
commutes with the one-dimensional evolution; the even field evolves autonomously, and the odd
field obeys a linear Markov equation in the even field's velocity with
\begin{equation}
 \|F_o(t)\|_{L^1}\le e^{-\alpha\beta(t-s)}\|F_o(s)\|_{L^1},\qquad \beta=2\pi q_{\min}.
\end{equation}
\end{lemma}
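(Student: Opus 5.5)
The plan has three steps: establish the reflection symmetry, derive the equation for the odd part, and prove an $L^1$ contraction by a sign argument.

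\emph{Symmetry.} Let $(\mathcal R F)(x,\theta)=F(x,-\theta)$. I will check term by term that $\mathcal R$ commutes with the kinetic equation of the Methods.

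The transport term $-\partial_x(v_s\cos\theta\,F_s)$ commutes with $\mathcal R$. This uses that $\cos$ is even and that $v_s$ depends only on the sensed densities. Those densities are $\theta$-averages and are therefore unchanged by $\mathcal R$.

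The diffusion $D_t\partial_x^2$ acts only in $x$ and trivially commutes.

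For the turning term, substitute $\varphi\to-\varphi$ and use $q(-\varphi)=q(\varphi)$. This shows that $F\mapsto\int q(\varphi)[F(\theta-\varphi)-F(\theta)]\,d\varphi$ commutes with $\mathcal R$.

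\emph{Odd-part equation.} Decompose $F=F_e+F_o$ with $F_{e/o}=\tfrac12(F\pm\mathcal RF)$. The density depends only on $F_e$, because $F_o$ integrates to zero in $\theta$. Hence the speeds $v_s$ are functionals of $F_e$ alone.

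Apply the even and odd projections to the equation. The equation for $F_e$ closes on $F_e$, so the even field evolves autonomously. The odd part then satisfies
\[\partial_tF_o=-\partial_x(v_s[F_e]\cos\theta\,F_o)+D_t\partial_x^2F_o+\alpha(q*F_o-F_o).\]
This equation is linear in $F_o$, with a time-dependent coefficient given by the even field's velocity. It is the forward equation of a Markov process: advection–diffusion in $x$ together with jumps in $\theta$.

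\emph{Contraction.} The key point is that every odd function has $\int F_o(x,\theta)\,d\theta=0$ at each $x$. I split the turning operator as $\alpha(q*\cdot)-\alpha$ and write $q=q_{\min}+(q-q_{\min})$, where $q-q_{\min}\ge0$. The uniform part gives $\alpha q_{\min}\int F_o\,d\varphi$, which vanishes for odd fields.

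This leaves
\[\partial_tF_o=\mathcal T F_o+\alpha\big((q-q_{\min})*F_o\big)-\alpha F_o .\]
Here $\mathcal T$ is transport plus diffusion, and the convolution kernel $q-q_{\min}$ is nonnegative with total mass $1-\beta$, where $\beta=2\pi q_{\min}$.

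Next I differentiate $\int|F_o|$ in time, using a smooth approximation of the modulus.
\begin{itemize}
\item The $\mathcal T$ part contributes $\le0$ by Kato's inequality; it is in conservative form on the periodic cell.
\item The convolution part contributes at most $\alpha(1-\beta)\|F_o\|_{L^1}$, since the kernel is positive and convolution with a positive kernel of mass $1-\beta$ is $L^1$-bounded by $1-\beta$.
\item The loss term contributes $-\alpha\|F_o\|_{L^1}$.
\end{itemize}
Adding these gives $\tfrac{d}{dt}\|F_o\|_{L^1}\le-\alpha\beta\|F_o\|_{L^1}$, and Grönwall's inequality then yields the stated bound.

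\emph{Main obstacle.} The delicate step is the regularity needed for the $L^1$ sign argument. The transport coefficient $v_s[F_e](x,t)$ is merely bounded and Lipschitz in $x$. I would therefore carry out the estimate on smooth solutions, or with $\sqrt{F_o^2+\delta^2}$ as the approximation of $|F_o|$, and then pass to the limit. I would also check that the mean-zero property in $\theta$ is preserved by the evolution. It is, because the equation maps odd functions to odd functions.
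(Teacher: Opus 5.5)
Your proposal is correct and follows the paper's own proof. You use the same splitting of the kernel: you write $q=q_{\min}+(q-q_{\min})$, the paper writes $q=\beta/(2\pi)+(1-\beta)\widetilde q$. In both, the uniform part annihilates odd fields and what remains is a Markov generator minus the constant $\alpha\beta$. The only difference is that you prove the $L^1$ contraction of that Markov part directly, via the regularized sign (Kato) estimate, whereas the paper simply invokes it.
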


\begin{proof}
The odd field carries zero density at every position, so the speeds are determined by the even
field alone. Write $q=\beta/(2\pi)+(1-\beta)\widetilde q$: uniform angular resetting annihilates
any odd function, leaving a mass-preserving, positivity-preserving time-dependent Markov generator
minus the constant $\alpha\beta$. The Markov part is an $L^1$ contraction on signed densities,
which gives the bound; distinct species use their own $\alpha\beta$.
\end{proof}

\noindent For even observables $a,b$ the operator $\Gamma_q(a,b)$ is again even, so the
conditional covariance~\eqref{eq:carre} of even observables is independent of the odd component.

\subsubsection{Particle pilot and fluctuation data}
\begin{table}[htbp]\centering
\caption{\textbf{Fluctuations at the two designs.} Upper block: time variance of the low-pass,
density-phase-aligned observable in the interacting-particle pilot, mean over three independent
seeds $\pm$ seed-to-seed standard deviation, with the deterministic value in the same gauge.
Lower block: independent-angle Monte Carlo against the exact law~\eqref{eq:noise}.}\label{tab:noise}
\footnotesize
\begin{tabular}{lrrr}
\toprule
kernel & particles & time variance & deterministic value\\
\midrule
B & $16{,}000$ & $0.04740\pm0.00379$ & $4.7124\times10^{-4}$\\
B & $64{,}000$ & $0.01885\pm0.00234$ & $4.7124\times10^{-4}$\\
P & $16{,}000$ & $0.04827\pm0.00757$ & $5.6565\times10^{-4}$\\
P & $64{,}000$ & $0.01999\pm0.00032$ & $5.6565\times10^{-4}$\\
\midrule
\multicolumn{4}{l}{independent-angle ensembles, both kernels, $N_s=1000$ and $4000$, lags $0,10,50$:}\\
\multicolumn{4}{l}{\quad largest deviation from Eq.~\eqref{eq:noise} $=1.62$ estimated standard errors}\\
\bottomrule
\end{tabular}
\end{table}

\suppnote{Constraint sweep of the kinetic design space}

\noindent In a 33-harmonic kernel family at the critical Hopf reference,
matching $M=2,\ldots,10$ angular relaxation rates leaves the local response capacity at
$\Gamma_b=2$ while the region of responses reachable to first order within unit budget
$\|h\|_\infty\le1$ shrinks by a factor of two to three per matched rate; a fixed box of targets lies
within that budget up to $M=6$ and outside it from $M=8$, and 31 full periodic kinetic solves of fixed-kernel designs return the
target power within 0.93\%.

\subsection{Setting}
At the critical Hopf reference of the main text ($\kappa_q=5.5122$, $N=A=128$) the kernel is
$q=q_0(1+h)$, $h(\varphi)=\sum_{j<H}c_j\cos j\varphi$. The complex eigenvalue response
$\delta\lambda$ along each direction of the subspace that preserves normalisation, $\gamma_1$ and
$\gamma_2$, including self-consistent reshaping of the interface, was computed with the linearised
kinetic solver; for $H=9$ it reproduces the delivered responses to $10^{-12}$. Matching
$\gamma_3,\ldots,\gamma_M$ adds linear constraints from the exact Bessel moments of
$q_0\cos j\varphi$. For each $M$ we computed the hidden dimension; the capacity
$\Gamma_b=\operatorname{rank}(\delta\operatorname{Re}\lambda,\delta\operatorname{Im}\lambda)$ on
the hidden subspace; the region of $(\delta\operatorname{Re}\lambda,\delta\operatorname{Im}\lambda)$
reachable with $\|h\|_\infty\le1$, by linear programming of its support function; its growth
half-range $D_M^{(H)}$, the prediction loss $D_M$ of the main text restricted to the $H$-harmonic
family and a lower bound on it; and, for targets of oscillation power $I$ and frequency $\Omega$,
the smallest budget $\|h\|_\infty$ of a first-order design. A budget below one guarantees a positive kernel; a larger budget does not by itself imply a
negative one. The map from $(\delta\operatorname{Re}\lambda,\delta\operatorname{Im}\lambda)$ to
$(I,\Omega)$ is the fixed linear map of the Hopf normal form (Methods). The $D_M$ of Fig.~2c of the
main text is computed at the working point from the response table truncated at $m=8$ and spans
$M=0,\ldots,4$; the $D_M^{(33)}$ here is the same functional restricted to the 33-harmonic family
at the Hopf reference and spans $M=2,\ldots,10$. The two agree in trend and are not expected to
agree in value.

\subsection{Results}
Fig.~4c--e of the main text and Supplementary Table~\ref{tab:nrqs} use $H=33$. Each matched rate removes one hidden
direction, but $\Gamma_b=2$ for every $M$ from 2 to 10. The reachable area falls from
$2.5\times10^{-3}$ at $M=2$ to $6.0\times10^{-4}$, $7.4\times10^{-5}$ and $1.2\times10^{-5}$ at
$M=4$, 6 and 8, and $D_M^{(33)}$ from $5.3\times10^{-2}$ to $1.1\times10^{-3}$ at $M=8$, after
which it levels off. The cost of fixed targets rises: for the targets B, P and F the smallest
budget grows from 0.026--0.034 at $M=2$ to 0.49--0.59 at $M=6$ and exceeds one at $M=7$. Of 400
random targets in the box $I\in[4,7]\times10^{-4}$, $\Omega\in[0.2972,0.2978]$, first-order designs within
$\|h\|_\infty<1$ reach all at $M\le6$, 24.5\% at $M=7$ and none at $M\ge8$.

\begin{table}[h]\centering\footnotesize
\caption{\textbf{Design space against the number $M$ of matched angular relaxation rates, 33-harmonic
family.} $D_M^{(33)}$ and area at unit budget; ``reachable'' is the fraction of 400 targets in the
box reached with budget below one.}\label{tab:nrqs}
\TabNRQS
\end{table}

\subsection{Truncation}
Repeating the analysis with $H=9$, 13, 17, 25 and 33 (Supplementary Table~\ref{tab:truncH}; Extended Data
Fig.~3 of the main text) shows that the limit found with nine harmonics, where 1.5\% of the
targets are reachable at $M=5$, came from the truncation. With more harmonics the unit-budget
boundary moves out and settles: $D_M^{(H)}$ and the budgets for $H=25$ and 33 agree closely up to
$M=8$, and for both families the whole box is reachable at $M=6$ and none of it at $M=8$.
$\Gamma_b=2$ holds for every $H\ge13$ and $M\le10$; the value $\Gamma_b=1$ seen with nine
harmonics at $M=7$ reflected a single remaining hidden direction.

\begin{table}[h]\centering\footnotesize
\caption{\textbf{Truncation test: fraction of the target box reachable within $\|h\|_\infty<1$ for kernel
families with $H$ harmonics.} A dash marks $\Gamma_b<2$.}\label{tab:truncH}
\TabTrunc
\end{table}

\subsection{Full kinetic validation and held-out design}
Each design was tested by the complete one-cell periodic kinetic boundary-value problem with the
kernel fixed, so that power and frequency are outputs (17 temporal nodes, $N=A=128$;
Supplementary Table~\ref{tab:kin}). Three groups were solved: the minimum-budget designs for B, P and F at
$M=2$, 3 and 4 in the nine-harmonic family; eight held-out targets, drawn at random in the box before any kinetic solve (a target was kept if its nine-harmonic $M=4$ design
has budget below 0.7), designed at $M=3$ and 4; and B, P and F at $M=5$ and 6 in the 33-harmonic
family, which the nine-harmonic family cannot reach. All 31 solves converged and match
$\gamma_1,\ldots,\gamma_M$ to $10^{-16}$. For the nine-harmonic designs the realised power exceeds
the target by 0.24--0.44\% and the frequency lies $5$--$9\times10^{-6}$ below it; the error grows
slowly with the budget, and at $M=2$ it equals that of the earlier B, P and F designs, so it is
the residual of the first-order formula. The 33-harmonic designs at $M=5$ and 6 return the power
within 0.93\% and the frequency within $3.7\times10^{-5}$; their larger error reflects the finer
angular structure of these kernels. Supplementary Note 9 removes this residual by solving the
orbit and the kernel coordinates together.

\begin{table}[h]\centering\footnotesize
\caption{\textbf{Full periodic kinetic solves of fixed-kernel designs.} First block: B, P, F, nine
harmonics; second block: B, P, F, 33 harmonics; third block: held-out targets drawn at random before any kinetic solve, nine harmonics.}\label{tab:kin}
\TabKinetic
\end{table}

\suppnote{Joint inverse design and independent forward integration}

\noindent With the normalisation, $\gamma_1$ and $\gamma_2$ of the reference
kernel imposed exactly, the periodic orbit of the full kinetic equations and two hidden kernel
coordinates can be solved together for a prescribed oscillation power and frequency; the resulting
kernels, held fixed in an independent forward integration started from the steady interface of the
reference kernel, lead the system to the prescribed state.

\subsection{Formulation}
The kernel is $q=q_0(1+h)$ with $h=\sum_{j<33}c_j\cos j\varphi$ and $c$ in the subspace that
preserves the normalisation and the first two angular rates, so that the first three entries of
$\gamma(q)-\gamma(q_0)$ vanish identically; for the design that also preserves $\gamma_3$ and
$\gamma_4$ the subspace is cut by two further Bessel-moment constraints. For a target
$(I^*,\Omega^*)$ the linear design is the kernel direction of smallest budget $\|h\|_\infty$
whose first-order response, mapped through the Hopf normal form (Methods), returns the
target; the perpendicular direction is the direction of equal budget whose response is orthogonal
to it. With $U=[c_{\rm lin},c_{\perp}]$ the hidden coordinates are $a=(a_1,a_2)$ and
$c=Ua$; the linear design is $a=(1,0)$.

The joint problem is the periodic boundary-value problem of the one-cell, angle-even kinetic
equations of the main text, represented by temporal Fourier collocation on 17 nodes at
$N=A=128$, augmented by three scalar equations: the phase condition, $I(\text{orbit})=I^*$ and
$\Omega=\Omega^*$, and by the two unknowns $a$. It is solved by Newton iteration with a Schur
complement on $a$; each step reuses the harmonic factorisation of the reference operator. The
Newton residual, the kinetic residual, the phase residual and the relative power residual are
recorded at each iteration.

\subsection{Results}
Seven targets were solved (Supplementary Table~\ref{tab:joint}). All converged from the linear design in three
or four iterations; the final residual norms are $5$--$6\times10^{-13}$ for the four designs of
the main text and $10^{-12}$--$4\times10^{-11}$ for the three boundary targets, and the matched
moments hold to $10^{-18}$. The nonlinear coordinates differ from the linear ones by
$|a-(1,0)|=0.016$, $0.003$, $0.003$ and $0.005$ for m1, m2, m3 and m2$'$; the correction is
along the perpendicular direction for m2, m3 and m2$'$ and mainly a shortening of the linear
direction for m1, whose target lies on the low-frequency side of the reference.

Four kernels were then held fixed in a forward integration of 8000 time units with 1024 steps
per target period, started from the steady interface of the reference kernel m0 plus
$0.1\sqrt{5\times10^{-4}/2Q_2}$ times the critical eigenvector; nothing about the target enters
the integration. The reference kernel m0, a steady-side design with the same moments and a budget
of 0.029, keeps the band steady: the critical-mode amplitude decays from 0.58 to 0.11 over 4000
time units and the temporal standard deviation of the density over the final window is
$1.3\times10^{-5}$. Under m1, m2, m3 and m2$'$ the amplitude grows and saturates within about
2000 time units. A six-harmonic fit over the last 16 periods returns the frequency and power in
Supplementary Table~\ref{tab:joint}; the direct density variance over the same window is within 0.03\% of the
target. Sixteen further cycles started on each solved orbit return relative orbit-closure defects
of $5\times10^{-7}$ (m1, m2, m2$'$) and $3\times10^{-6}$ (m3). Floquet multipliers from Arnoldi
iteration on the period map (512 steps per period, eight values) have leading moduli 0.9435,
0.9423, 0.8845 and 0.9422 with residuals below $5\times10^{-15}$ and neutral-mode relative errors
of $2\times10^{-6}$ to $1.5\times10^{-5}$.

\subsection{Towards the amplitude boundary}
Three further targets raise the power at $\Omega^*=0.2975$ to $2$, $3$ and $4\times10^{-3}$. The
joint solve converges for each, with the Newton safety cap on the budget raised from 0.25 to the unit budget, below which positivity
is guaranteed. The budget grows to 0.204, 0.330 and 0.457, so the kernel minimum falls to 0.80, 0.67 and
0.54 of $q_0$; the nonlinear correction grows to $|a-(1,0)|=0.007$, $0.011$ and $0.016$; and the
orbit-closure defect after sixteen cycles grows to $3.9\times10^{-5}$, $1.7\times10^{-4}$ and
$4.3\times10^{-4}$. For these three targets the independent integration from the perturbed
steady interface was not run. For the largest target the Floquet calculation returned a
neutral-mode error of $1.9\times10^{-3}$ at both 512 and 1024 steps per period, above the
$10^{-3}$ tolerance of the variational integrator, and was rejected; the stability of that orbit
is not established.

\begin{table}[h]\centering\footnotesize
\caption{\textbf{Joint inverse designs.} Budget is $\|h\|_\infty$; $|a-(1,0)|$ is the distance of the
nonlinear hidden coordinates from the linear design; ``forward'' columns are the harmonic fit over
the last 16 periods of the independent integration from the perturbed steady interface (m1--m2$'$)
or of sixteen cycles from the solved orbit (b2--b4); closure is the relative orbit-closure defect
after sixteen cycles; the last column is the leading nontrivial Floquet modulus (512 steps per period).
n.c., not computed; n.d., not determined: for b4 the variational integration failed its neutral-mode
check ($1.9\times10^{-3}$ at 512 and 1024 steps per period, tolerance $10^{-3}$), so no multiplier is
reported.}\label{tab:joint}
\footnotesize\setlength{\tabcolsep}{3pt}
\begin{tabular}{lrlllllllrr}
\toprule
design & $M$ & $I^*$ & $\Omega^*$ & budget & $\min q/q_0$ & $|a-(1,0)|$ & forward $\Omega$ & forward $I$ & closure & Floquet\\
\midrule
m1 & 2 & $5\times10^{-4}$ & 0.2965 & 0.067 & 0.933 & 0.016 & 0.29650000 & $5.0000\times10^{-4}$ & $5.0\times10^{-7}$ & 0.9435\\
m2 & 2 & $5\times10^{-4}$ & 0.2985 & 0.069 & 0.931 & 0.003 & 0.29850000 & $5.0000\times10^{-4}$ & $4.8\times10^{-7}$ & 0.9423\\
m3 & 2 & $1\times10^{-3}$ & 0.2975 & 0.081 & 0.919 & 0.003 & 0.29750000 & $1.0000\times10^{-3}$ & $2.9\times10^{-6}$ & 0.8845\\
m2$'$ & 4 & $5\times10^{-4}$ & 0.2985 & 0.086 & 0.914 & 0.005 & 0.29850000 & $5.0000\times10^{-4}$ & $4.8\times10^{-7}$ & 0.9422\\
b2 & 2 & $2\times10^{-3}$ & 0.2975 & 0.204 & 0.796 & 0.007 & 0.29749999 & $2.0000\times10^{-3}$ & $3.9\times10^{-5}$ & n.c.\\
b3 & 2 & $3\times10^{-3}$ & 0.2975 & 0.330 & 0.670 & 0.011 & 0.29749997 & $3.0000\times10^{-3}$ & $1.7\times10^{-4}$ & n.c.\\
b4 & 2 & $4\times10^{-3}$ & 0.2975 & 0.457 & 0.543 & 0.016 & 0.29749992 & $4.0000\times10^{-3}$ & $4.3\times10^{-4}$ & n.d.\\
\bottomrule
\end{tabular}
\end{table}

\suppnote{A reaction--diffusion fibre}

\subsection{The family and its coarse data}\label{si:rd}
For $w=(U,V)^{\mathsf T}$ on a periodic domain in the plane consider
\begin{equation}\label{eq:rdfamily}
 \partial_t w=Jw+D\nabla^2w+N_\theta(w),\qquad
 J=\begin{pmatrix}0.8&-1\\1&-1\end{pmatrix},\qquad D=\operatorname{diag}(1,3.5),
\end{equation}
\begin{equation}\label{eq:rdnonlin}
 N_\theta(U,V)=\big(\eta_2U^2+\eta_{11}UV-\beta_3U^3-\beta_5U^5,\ 0\big)^{\mathsf T},\qquad
 \theta=(\eta_2,\eta_{11},\beta_3,\beta_5).
\end{equation}
Every member satisfies $N_\theta(0)=0$ and $DN_\theta(0)=0$, so the uniform state, the
linearisation $Jw+D\nabla^2w$ and the dispersion relation
\begin{equation}
 \lambda_\pm(k)=\operatorname{eig}\big(J-k^2D\big),\qquad
 \det(J-k^2D)=3.5k^4-1.8k^2+0.2,
\end{equation}
are the same for every $\theta$ and at every wavenumber. We take the coarse map to be
$\mathcal C(\theta)=(J,D)$, the data a linear-stability experiment returns; its fibre is the whole
parameter space $\{\theta\}$. The uniform state is stable at $k=0$
($\operatorname{tr}J=-0.2$, $\det J=0.2$) and Turing-unstable in the band
$k^2\in(0.16238,0.35190)$, with $k_c^2=1.8/7$, $k_c=0.507093$ and $\sigma=\lambda_+(k_c)=0.022776$.

\subsection{Restricted response: the three-mode coefficients}
Let $r$ and $\ell$ be the right and left critical eigenvectors of $L(k_c)=J-k_c^2D$, normalised by
$r_U=1$, let $B_\theta(x,y)=(\eta_2x_Uy_U+\tfrac12\eta_{11}(x_Uy_V+x_Vy_U),0)^{\mathsf T}$ be the
symmetric quadratic form and $C_\theta(x,y,z)=(-\beta_3x_Uy_Uz_U,0)^{\mathsf T}$ the cubic form of
$N_\theta$, and write $L(k)=J-k^2D$. For three critical modes at $120^\circ$,
$w=\sum_jA_jre^{\mathrm ik_j\cdot x}+\mathrm{c.c.}+\cdots$, the slaved second-order fields are
\begin{align}
 w_0&=-L(0)^{-1}\,2B_\theta(r,r), &
 w_2&=-\big(L(2k_c)-2\sigma\big)^{-1}B_\theta(r,r),\\
 w_-&=-\big(L(\sqrt3k_c)-2\sigma\big)^{-1}2B_\theta(r,r), &
 w_+&=-\big(L(k_c)-2\sigma\big)^{-1}\big(I-P\big)2B_\theta(r,r),
\end{align}
The mean mode is evaluated at zero shift and the harmonics at $2\sigma$; this convention is used for
every coefficient and for the construction of the twins. It is not a consistent expansion in
$\sigma$. The two consistent choices, dropping $2\sigma$ everywhere (the threshold limit) or keeping it
everywhere, change the coefficients of individual reaction laws at $\sigma=0.0228$. Twins that share
$(a,g,h)$ exactly in the convention used here differ within each target by at most $3.4\%$ in $g$ and
$4.7\%$ in $h$ in the threshold limit, and by up to $25.5\%$ in $g$ and $10.6\%$ in $h$ when $2\sigma$
is kept in the mean mode as well (target T3 in both cases). The leading-order equivalence of the
twins is therefore defined up to these convention-dependent differences.

with $P=r\ell^{\mathsf T}/\ell^{\mathsf T}r$ the critical projection, and the amplitude equations read
\begin{equation}\label{eq:rdamp}
 \dot A_1=\sigma A_1+a\,\bar A_2\bar A_3-g|A_1|^2A_1-h\big(|A_2|^2+|A_3|^2\big)A_1
\end{equation}
and cyclically, with
\begin{align}
 a&=\frac{2\,\ell^{\mathsf T}B_\theta(r,r)}{\ell^{\mathsf T}r},\qquad
 g=-\frac{\ell^{\mathsf T}\big[2B_\theta(r,w_0)+2B_\theta(r,w_2)+3C_\theta(r,r,r)\big]}{\ell^{\mathsf T}r},\\
 h&=-\frac{\ell^{\mathsf T}\big[2B_\theta(r,w_0)+2B_\theta(r,w_-)+2B_\theta(r,w_+)+6C_\theta(r,r,r)\big]}{\ell^{\mathsf T}r}.
\end{align}
The quadratic coefficient $a$ is linear in $(\eta_2,\eta_{11})$ and independent of $\beta_3$; $g$ and
$h$ are affine in $\beta_3$ and quadratic in $(\eta_2,\eta_{11})$. At $\eta_2=\eta_{11}=0$ they
reduce to $g=3\beta_3\ell_Ur_U^3/\ell^{\mathsf T}r$ and $h=2g$ ($g=4.1123$ at $\beta_3=1$).

\subsection{Outputs and capacity}
Stripes have amplitude $|A|=\sqrt{\sigma/g}$; hexagons have
$|A|=\big(|a|+\sqrt{a^2+4\sigma(g+2h)}\big)/\big(2(g+2h)\big)$, with spots or holes according to the
sign of $a\,r_U$. For $h>g$, stripes are linearly stable to the oblique pair when
$\sigma(1-h/g)+|a|\sqrt{\sigma/g}<0$, and hexagons exist stably when $\sigma<a^2(2g+h)/(h-g)^2$.
The restricted response of $(a,g)$ to the hidden coordinates $(\eta_2,\beta_3)$ at
$\eta_2=\eta_{11}=0$, $\beta_3=1$ is
\begin{equation}
 \frac{\partial(a,g)}{\partial(\eta_2,\beta_3)}=\begin{pmatrix}2.7415&0\\0&4.1123\end{pmatrix},
\end{equation}
so the local response capacity for the pair (morphology, amplitude) is $\Gamma_b=2$.

\subsection{The response-active quotient is finite-dimensional}
The fibre of $(J,D)$ is infinite-dimensional: it contains every nonlinearity $N$ with $N(0)=0$ and
$DN(0)=0$. Through third order in the amplitude, however, the pattern-forming dynamics read $N$
only through the three numbers $(a,g,h)$ of Eq.~\eqref{eq:rdamp}, so the response-active quotient of
the fibre at this order has dimension at most three, whatever the number of hidden parameters. The
Jacobian of $(a,g,h)$ with respect to $(\eta_2,\eta_{11},\beta_3,\beta_5)$ is
\begin{equation}
 \frac{\partial(a,g,h)}{\partial(\eta_2,\eta_{11},\beta_3,\beta_5)}\Big|_{\eta_2=0.09}=
 \begin{pmatrix}2.742&1.426&0&0\\-5.943&-3.606&4.112&0\\-8.154&-4.523&8.225&0\end{pmatrix},
\end{equation}
with singular values $15.0$, $2.28$ and $0.25$. The column of $\beta_5$ vanishes identically: the
quintic saturation is a silent direction at this order, entering only at relative order $A^2$,
$\delta A/A=-\tfrac12\,c_5A^2/g$ with $c_5=10\,\beta_5\,\ell_U/\ell^{\mathsf T}r=13.7\beta_5$.
At $\eta_2=\eta_{11}=0$ the rank drops to two, because $h\equiv2g$ there and $g,h$ respond to
$\beta_3$ in fixed proportion. The third singular value is sixty times smaller than the first, so
the effective capacity is two. Within the bistable window $0.06\le\eta_2\le0.12$ the image of
$(\eta_2,\beta_3)\mapsto(A_{\rm stripe},A_{\rm hexagon})$ is a narrow wedge, $A_{\rm hexagon}/A_{\rm stripe}\in[0.475,0.646]$
(at $A_{\rm stripe}=0.060$: $[0.491,0.536]$), because $h/g$ stays close to two: the two amplitudes are
independently prescribable only inside the wedge.

\subsection{Inverse design}
Given target amplitudes $(A^*_{\rm stripe},A^*_{\rm hexagon})$ in the wedge, the closed-form step
solves $\sqrt{\sigma/g}=A^*_{\rm stripe}$ and the hexagon relation for $(\eta_2,\beta_3)$. A
one-step correction replaces the targets by $A^*+\big(A^{\rm theory}(\theta_0)-A^{\rm PDE}(\theta_0)\big)$
and solves again, which absorbs the systematic difference between three-mode theory and simulation at
the first design. Supplementary Table~\ref{tab:rdinv} lists the outcomes.

\subsection{Numerical method}
Equation~\eqref{eq:rdfamily} is integrated pseudo-spectrally with the exponential time-differencing
fourth-order Runge--Kutta scheme\cite{hochbruck}, applying the exponential to diffusion alone and
treating the whole reaction explicitly; placing the diagonal of $J$ in the exponential and its
off-diagonal part in the explicit stage produces a spurious low-wavenumber growth at $\Delta t=0.5$.
The domain $L_x=8\pi/k_c$, $L_y=16\pi/(\sqrt3k_c)$ carries the hexagonal triad at $k_c$ exactly;
$64\times74$ modes with two-thirds dealiasing, $\Delta t=0.5$. The linear growth rate of the
lattice mode at $k_c$ is reproduced as $0.022776$.

\subsection{Predictions against simulation}
\begin{table}[htbp]\centering
\caption{\textbf{Reaction--diffusion family~\eqref{eq:rdfamily}: three-mode predictions against full
two-dimensional simulations ($\beta_5=0.5$, $\eta_{11}=0$).} Amplitudes are single-mode $|A|$ of $U$.
Oblique growth rates are measured on a converged stripe.}\label{tab:rd}
\footnotesize\setlength{\tabcolsep}{3pt}
\resizebox{\linewidth}{!}{%
\begin{tabular}{llll}
\toprule
quantity & setting & prediction & simulation\\
\midrule
dispersion difference & all members, $0\le k\le1.2$ & $0$ & $\le6.6\times10^{-13}$\\
linear growth at $k_c$ & lattice mode & $0.022776$ & $0.022776$\\
stripe amplitude & $\eta_2=0,\ \beta_3=1$ & $0.0744$ & $0.0740$\\
stripe amplitude & $\eta_2=0,\ \beta_3=3$ & $0.0430$ & $0.0429$\\
morphology, polarity & $\eta_2=\pm0.3$ & hexagons, spots/holes & hexagons, skew $+0.86/-0.85$\\
hexagon amplitude & $\eta_2=0.07,\ 0.09,\ 0.15,\ 0.30$ & $0.0389,\ 0.0410,\ 0.0492,\ 0.1093$ & $0.0384,\ 0.0398,\ 0.0445,\ 0.0597$\\
hexagons stable & along $\eta_2$ & $|\eta_2|>0.0559$ & decay at $0.045$, persist at $0.07$\\
stripes stable & along $\eta_2$ & $|\eta_2|<0.1134$ & persist at $0.150$, lost at $0.175$\\
oblique growth rate & $\eta_2=0,\ 0.02,\ 0.04$ & $-0.0228,\ -0.0187,\ -0.0148$ & $-0.0215,\ -0.0186,\ -0.0157$\\
oblique growth rate & $\eta_2=0.07,\ 0.15$ & $-0.0088,\ +0.0079$ & $-0.0114,\ -0.0004$\\
\bottomrule
\end{tabular}}
\end{table}

\begin{table}[htbp]\centering
\caption{\textbf{Inverse design of $(A_{\rm stripe},A_{\rm hexagon})$ in the reaction--diffusion fibre
($\eta_{11}=0$, $\beta_5=0.5$).} ``Closed'' is the solution of the three-mode relations; ``corrected'' shifts
the targets by the measured theory--simulation difference at the closed-form design and solves again.
Stripe- and hexagon-seeded runs at each kernel both keep their pattern.}\label{tab:rdinv}
\footnotesize
\begin{tabular}{llrrrrrr}
\toprule
design & target & stage & $\eta_2$ & $\beta_3$ & $x$ & realised & error\\
\midrule
D1 & $(0.060,\,0.031)$ & closed & $0.09383$ & $1.6091$ & $0.68$ & $(0.0595,\,0.0307)$ & $(-0.8,\,-1.1)\%$\\
   &                    & corrected & $0.09519$ & $1.5874$ & $0.69$ & $(0.0600,\,0.0310)$ & $(0.0,\,0.0)\%$\\
D2 & $(0.090,\,0.050)$ & closed & $0.09744$ & $0.7600$ & $1.06$ & $(0.0879,\,0.0472)$ & $(-2.3,\,-5.7)\%$\\
   &                    & corrected & $0.11170$ & $0.7536$ & $1.24$ & $(0.0894,\,0.0488)$ & $(-0.7,\,-2.4)\%$\\
D3 & $(0.060,\,0.034)$ & closed & $0.16135$ & $1.7475$ & $1.17$ & $(0.0589,\,0.0322)$ & $(-1.9,\,-5.3)\%$\\
   &                    & corrected & $0.18441$ & $1.7555$ & $1.36$ & $(0.0596,\,0.0332)$ & $(-0.7,\,-2.4)\%$\\
\bottomrule
\end{tabular}
\end{table}

\begin{table}[htbp]\centering
\caption{\textbf{Stripe loss and hexagon-amplitude error along the closed-form design curve
$A_{\rm stripe}=0.060$, parametrised by $x=a/\sqrt{\sigma g}$.} n.a., stripes lost, so no stripe
amplitude.}\label{tab:rdx}
\footnotesize
\begin{tabular}{rrrlrlr}
\toprule
$x$ & $\eta_2$ & $\beta_3$ & stripe-seeded & error & hexagon-seeded & error\\
\midrule
0.90 & 0.1246 & 1.663 & stripe & $-1.2\%$ & hexagon & $-2.9\%$\\
1.26 & 0.1745 & 1.783 & stripe & $-2.2\%$ & hexagon & $-6.2\%$\\
1.40 & 0.1938 & 1.840 & stripe & $-2.6\%$ & hexagon & $-7.7\%$\\
1.60 & 0.2215 & 1.933 & hexagon (stripes lost) & n.a. & hexagon & $-9.8\%$\\
1.90 & 0.2631 & 2.094 & hexagon (stripes lost) & n.a. & hexagon & $-13.2\%$\\
2.40 & 0.3323 & 2.425 & hexagon (stripes lost) & n.a. & hexagon & $-19.0\%$\\
\bottomrule
\end{tabular}
\end{table}

\begin{figure}[htbp]\centering
\includegraphics[width=.55\linewidth]{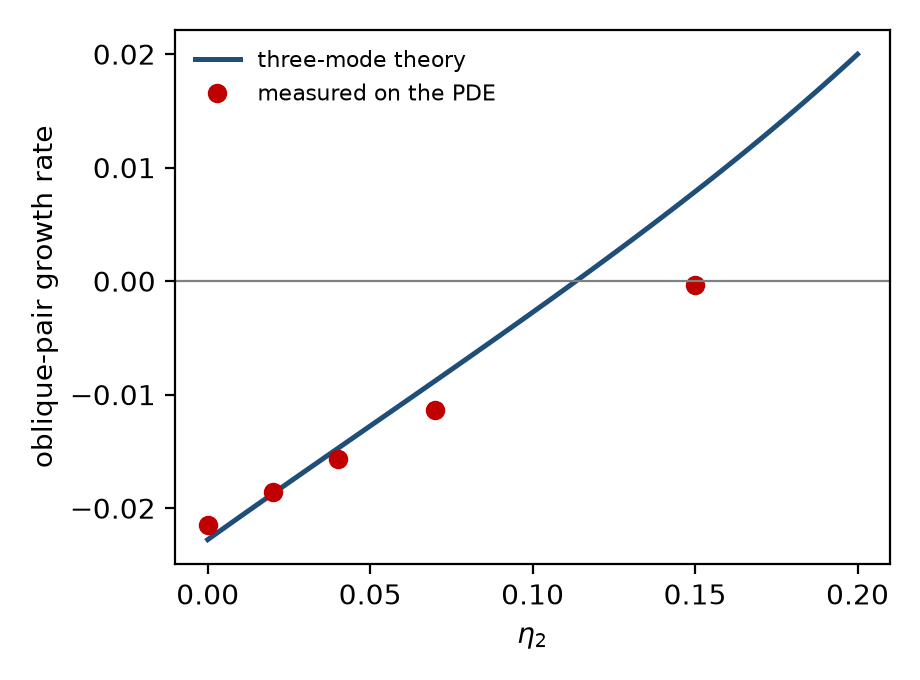}
\caption{Growth rate of the oblique mode pair on a converged stripe against $\eta_2$: three-mode theory
and direct measurement on the PDE. At $\eta_2=0$ the measured $-0.0215$ compares with the leading-order
$-0.0228$; the theory locates the stripe-stability boundary at $x\approx1.0$ against the observed
$1.48$--$1.60$.}\label{fig:rdoblique}
\end{figure}

\suppnote{The response-active quotient of the reaction--diffusion fibre}

\noindent In the family of Eq.~\eqref{eq:rdfamily} with an arbitrary polynomial
nonlinearity $N_\theta$ vanishing to second order at the origin, the map $\theta\mapsto(a,g,h)$
from the hidden coefficients to the three-mode amplitude-equation coefficients has rank three at
every sampled kernel, all monomials of degree four and higher lie in its null space, and kernels
of unrelated structure with equal $(a,g,h)$ reach the same pattern with amplitudes that differ
among themselves by about one per cent.

\subsection{Rank of the map to the quotient}
The nonlinearity may contain any monomial $U^iV^j$ with $i+j\ge2$ in either equation, so
$N_\theta(0)=0$, $DN_\theta(0)=0$, and $(J,D)$ and the dispersion relation are identical for every
member (Fig.~3a of the main text). Near the Turing threshold the three-mode amplitude equations
read $\theta$ only through $\Phi(\theta)=(a,g,h)$, the resonant quadratic coefficient and the two
cubic saturation coefficients, which are fixed by the symmetric bilinear and trilinear forms of the
quadratic and cubic parts of $N_\theta$ at the critical eigenvector (Supplementary Note 10).
Monomials of degree four and higher do not enter $\Phi$.

The Jacobian $\partial\Phi/\partial\theta$, evaluated by central differences at random kernels of
three families, has rank three at every point (Supplementary Table~\ref{tab:rank}; Fig.~3b and Extended Data Fig.~4a,b of the main text).
For the family with 22 coefficients, all quadratic and cubic monomials and $U^4$, $V^4$, $U^5$,
$V^5$ in both equations, the null space is 19-dimensional: the eight quartic and quintic columns
vanish identically and eleven combinations of quadratic and cubic coefficients leave $(a,g,h)$
unchanged. The third singular value is about two orders of magnitude below the first; its direction
changes $h/g$, which the four-coefficient family of Extended Data Fig.~1 barely moves.

\begin{table}[h]\centering\footnotesize
\caption{\textbf{Rank of the map from hidden nonlinear coefficients to $(a,g,h)$.} Singular-value ratios
are medians over the sampled kernels.}\label{tab:rank}
\TabRank
\end{table}

\subsection{A silent direction}
The quintic coefficient $\beta_5$ changes the kernel but not $(a,g,h)$. In full two-dimensional
simulations ($64\times74$ modes, $T=6000$, domain commensurate with the hexagonal triad) at two
points of the main-text family, raising $\beta_5$ from 0 to 12 keeps the pattern of every seed; in
the bistable window ($\eta_2=0.09$) stripes and hexagons both persist at every $\beta_5$
(Supplementary Table~\ref{tab:silentq}). The amplitude drifts by at most 14\%, the size
of the $O(A^2)$ correction a quintic term produces, and at $\beta_5=0$ the simulated stripe
amplitude equals the leading-order value $\sqrt{\sigma/g}=0.0744$. The active coefficient
$\beta_3$, varied over a factor of ten, moves the stripe amplitude from 0.103 to 0.033 along
$\beta_3^{-1/2}$.

\begin{table}[h]\centering\footnotesize
\caption{\textbf{Final pattern and amplitude against the silent coefficient $\beta_5$ ($\eta_{11}=0$,
$\beta_3=1$); seed 1.} Seeds 2 and 3 give the same patterns and amplitudes to a relative
$2\times10^{-11}$.}\label{tab:silentq}
\TabSilent
\end{table}

\subsection{Twin reaction laws}
If the quotient controls the collective state, kernels with equal $(a,g,h)$ must reach the same
state however different their coefficients are. Six target triples, specified by the stripe
amplitude $A_{\rm s}=\sqrt{\sigma/g}$, the ratio $x=a/\sqrt{\sigma g}$ and $h/g$, cover the
stripe-only, bistable and hexagon-only regions. For each, five kernels were drawn at random in the
16-coefficient space and projected onto $\Phi^{-1}(a,g,h)$ by minimum-norm Gauss--Newton steps,
keeping every coefficient below two in magnitude, and run from stripe, hexagon and noise initial
conditions (90 runs). For T1--T4, three further kernels have nonlinearity only in the activator
equation, only in the inhibitor equation, or in every monomial of both.

In 88 of the 90 runs the final pattern type is the predicted one. The two exceptions are
hexagon-seeded runs of one reaction law each in targets T1 and T5, where only stripes are predicted to
be stable; they remain hexagonal. Supplementary Table~\ref{tab:twins} separates the spread
among twins from the offset of their mean from the leading-order amplitude. In ten of twelve
target--seed groups the twins differ by 0.1--1.8\% (standard deviation), while the group mean can
lie up to 15.5\% below the leading-order value (T4, $x=2$). The error of the amplitude equations is common to a group, and kernels are separated by
$(a,g,h)$. For target T2, eight kernels of four structures give stripes of amplitude
0.0584--0.0597 (prediction 0.0600) and hexagons of amplitude 0.0300--0.0322 (prediction 0.0314)
(Fig.~3c and Extended Data Fig.~4c of the main text).

\begin{table}[h]\centering\footnotesize
\caption{\textbf{Twins with coefficients bounded by two: spread among the kernels of each target and
offset of their mean from the leading-order amplitude.} ``In main state'' counts kernels within 20\%
of the group median; the remaining two T3 hexagon runs are the larger branch discussed
below.}\label{tab:twins}
\TabTwins
\end{table}

\subsection{Seed robustness}
Every T2 and T4 kernel (16 kernels) and every point of the $\beta_5$ scan were rerun with two
further random seeds for the $10^{-3}$ perturbation added to the initial condition (160 runs;
Supplementary Table~\ref{tab:seeds}). With stripe or hexagon seeds, all 32 kernel--condition pairs and all 32
$\beta_5$ runs reach the same pattern for all three seeds, and the amplitudes agree to a relative
$10^{-5}$. From noise, five of the eight bistable T2 kernels reach stripes for two seeds and
hexagons for the third, which is the expected behaviour in a bistable window; the T4 kernels, for
which only hexagons are stable, reach hexagons for every seed.

\begin{table}[h]\centering\footnotesize
\caption{\textbf{Seed robustness of the twins (T2 and T4, 16 kernels) under three random
seeds.}}\label{tab:seeds}
\TabSeeds
\end{table}

\begin{figure}[h]\centering
\includegraphics[width=\linewidth]{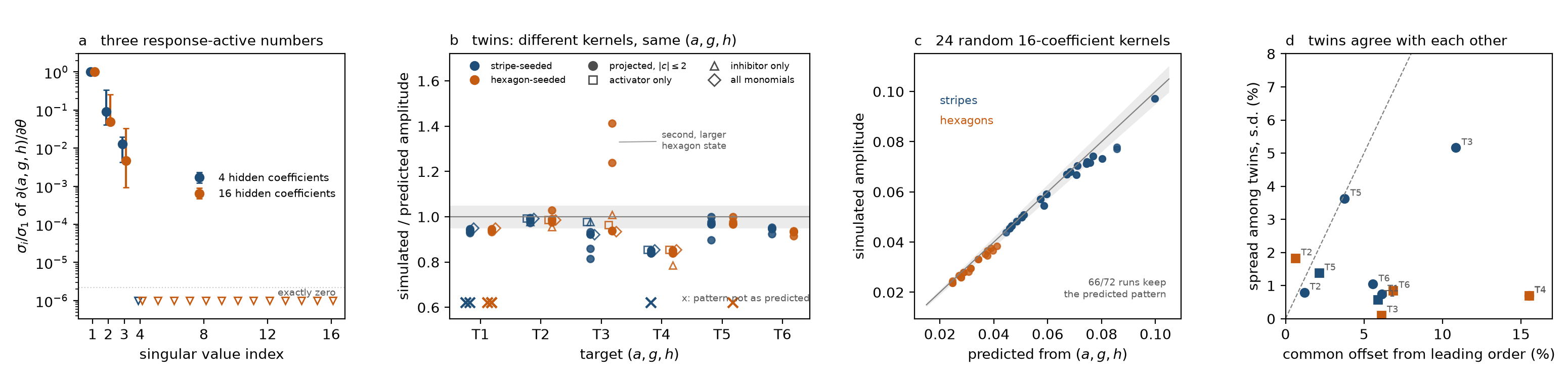}
\caption{\textbf{All reaction--diffusion quotient runs.} \textbf{a}, Singular values of the Jacobian of $(a,g,h)$. \textbf{b}, Simulated over predicted amplitude for every twin kernel, by target and seed; crosses mark runs whose pattern differs from the prediction. \textbf{c}, 24 random sixteen-coefficient kernels without projection: simulated against predicted amplitude. \textbf{d}, Spread among twins against the common offset from leading order.}\label{fig:rdsi}
\end{figure}
\subsection{Where the quotient stops}
Two of the five T3 twins, with largest coefficients 1.67 and 1.30, also possess a hexagonal state
of larger amplitude ($+24\%$ and $+41\%$, skewness above one), which captures their noise-seeded
runs. The cubic amplitude equations do not describe this branch; it is set by the silent
directions. Kernels whose construction forces large coefficients leave the regime of the quotient:
at T1 the activator-only kernel (largest coefficient 5.9) settles into hexagons and the
inhibitor-only kernel (8.0) falls to a different uniform state or diverges; at T4 the
inhibitor-only kernel (4.1) keeps stripes at $x=2$. Among 24 kernels drawn at random in the
16-coefficient space without projection, 66 of 72 runs give the predicted pattern, with a median
amplitude error of 2.8\%; the exceptions lie near the stripe-loss threshold ($x=1.0$--$1.3$, where
the leading-order threshold $x\simeq1.0$ underestimates the observed $1.48$--$1.60$) or at the
largest amplitudes (Supplementary Fig.~\ref{fig:rdsi}c).

\suppnote{Data behind the numbers quoted in the main text}

\label{app:data}
\begin{table}[htbp]\centering
\caption{\textbf{Interfacial sensitivity $W_m=\partial\Real\lambda/\partial\gamma_m$.} Values at the base
point ($\gamma_1=0.1$, reverse von Mises $\kappa=5$), at two angular resolutions and, for
$m\le2$, at a second composition; n.c., not computed. Extended Data Fig.~5a of the main text.}\label{tab:W}
\footnotesize\setlength{\tabcolsep}{4pt}
\begin{tabular}{lrrrrrrrr}
\toprule
$m$ & 1 & 2 & 3 & 4 & 5 & 6 & 7 & 8\\
\midrule
$A=64$, comp $0.75/1.25$ & $-2.0623$ & $-10.5762$ & $1.1174$ & $-2.2075$ & $-0.9826$ & $0.5244$ & $-1.1011$ & $0.4322$\\
$A=128$, comp $0.75/1.25$ & $-1.8192$ & $-10.0406$ & $0.9785$ & $-1.9877$ & $-0.9232$ & $0.5183$ & $-1.0448$ & $0.4025$\\
$A=128$, comp $0.85/1.15$ & $-0.7916$ & $-3.4523$ & n.c. & n.c. & n.c. & n.c. & n.c. & n.c.\\
\bottomrule
\end{tabular}
\end{table}
\begin{table}[htbp]\centering
\caption{\textbf{Bulk-matched witness.} The two kernels share $\gamma_1$ and $\gamma_2$ to $2.4\times10^{-17}$
and the same composition. Each sits within $2.0000013$ of the common base pointwise; between
themselves the largest ratio is $3.5412$. Extended Data Fig.~5c of the main text.}\label{tab:witness}
\footnotesize\setlength{\tabcolsep}{5pt}
\begin{tabular}{lll}
\toprule
grid & kernel $q_+$ & kernel $q_-$\\
\midrule
$A=64$ (dense Newton, exact spectrum) & unstable, $\Real\lambda=+0.009408$ & stable, $\Real\lambda\approx-0.05$\\
$A=96$ & breathing orbit & stable, $\Real\lambda=-0.047$\\
$A=128$ & breathing orbit & stable\\
$A=160$ & breathing orbit & stable, $\Real\lambda=-0.053$\\
\midrule
drift of $\Real\lambda$, measured & $+0.0256$ & $-0.034$\\
drift of $\Real\lambda$, linear prediction & $+0.0284$ & $-0.0300$\\
\bottomrule
\end{tabular}
\end{table}
\begin{table}[htbp]\centering
\caption{\textbf{Quasi-one-dimensional particle runs ($L_y=2$, $16$ replicas): band-speed spectral line
against the kinetic breathing frequency $0.269$--$0.281$.}}\label{tab:particles}
\begin{tabular}{lccc}
\toprule
run & line centre $\omega$ & peak-to-background & band speed vs kinetic\\
\midrule
$\kappa=20$, $\rho_0=640$ & $0.243$ & $2.0$ & $0.5$--$1.5\%$\\
$\kappa=20$, $\rho_0=2560$ & $0.2763$ & $7.9$ & $0.5$--$1.5\%$\\
$\kappa=2$ control, $\rho_0=640$ & no line & $1.3$ & n.a.\\
\bottomrule
\end{tabular}
\end{table}
\begin{table}[htbp]\centering
\caption{\textbf{One fixed physical kernel pair under grid refinement.} The kernel definition, a
$\varepsilon=0.02$ step in the linear family at the threshold, is the same at every resolution; it
is not re-optimised per grid. Convergence is non-monotone at the coarsest angular grids, but the
difference of the two rates is nearly grid independent and the signs never coincide.}
\label{tab:grid}
\footnotesize\setlength{\tabcolsep}{6pt}
\begin{tabular}{ccrrr}
\toprule
$N$ & $A$ & $\Real\lambda_+$ & $\Real\lambda_-$ & $\Real\lambda_+-\Real\lambda_-$\\
\midrule
$192$ & $96$ & $+9.2866\times10^{-5}$ & $-1.4724\times10^{-3}$ & $1.5653\times10^{-3}$\\
$96$ & $128$ & $+7.8267\times10^{-4}$ & $-7.8565\times10^{-4}$ & $1.5683\times10^{-3}$\\
$128$ & $128$ & $+7.8238\times10^{-4}$ & $-7.8598\times10^{-4}$ & $1.5684\times10^{-3}$\\
$128$ & $256$ & $+6.0263\times10^{-4}$ & $-9.6268\times10^{-4}$ & $1.5653\times10^{-3}$\\
$128$ & $384$ & $+5.8640\times10^{-4}$ & $-9.7982\times10^{-4}$ & $1.5662\times10^{-3}$\\
$128$ & $512$ & $+5.8763\times10^{-4}$ & $-9.7856\times10^{-4}$ & $1.5662\times10^{-3}$\\
\bottomrule
\end{tabular}
\end{table}
\begin{table}[htbp]\centering
\caption{\textbf{Direct nonlinear integration of the full angle-even kinetic model against
Eqs.~\eqref{eq:obs}--\eqref{eq:freq}, with $K_\rho$ and $G_0$ fixed in advance at the reference
point and nothing refitted per trajectory.} Frequency errors are on the shift from $\omega_0$, not
on the carrier. The last column is the power residual divided by $\varepsilon^2$, which
Eq.~\eqref{eq:obs} requires to be asymptotically constant.}\label{tab:dns}
\footnotesize\setlength{\tabcolsep}{5pt}
\begin{tabular}{crrrrr}
\toprule
$\varepsilon$ & $I_\rho$ predicted & $I_\rho$ measured & power err. & freq.\ shift err. &
 residual$/\varepsilon^2$\\
\midrule
$0.02$ & $2.89563\times10^{-4}$ & $2.90096\times10^{-4}$ & $0.184\%$ & $0.322\%$ & $1.332\times10^{-3}$\\
$0.04$ & $5.79128\times10^{-4}$ & $5.81360\times10^{-4}$ & $0.385\%$ & $0.661\%$ & $1.395\times10^{-3}$\\
$0.08$ & $1.15826\times10^{-3}$ & $1.16728\times10^{-3}$ & $0.779\%$ & $1.339\%$ & $1.410\times10^{-3}$\\
\bottomrule
\end{tabular}
\end{table}
\begin{table}[htbp]\centering
\caption{\textbf{A matched direction designed to be growth-silent and frequency-active,
$\chi_R\simeq0$ and $\chi_I=3.892786\times10^{-3}$ per unit amplitude.} The real drift is even in
$\varepsilon$ and scales as $\varepsilon^2$; the imaginary shift is odd and scales as
$\varepsilon$.}\label{tab:silent}
\footnotesize\setlength{\tabcolsep}{6pt}
\begin{tabular}{rrrr}
\toprule
$\varepsilon$ & $\Delta\Real\lambda$ & $\Delta\Imag\lambda$ & predicted $\Delta\Imag\lambda$\\
\midrule
$-0.04$ & $+9.1504\times10^{-8}$ & $-1.557039\times10^{-4}$ & $-1.557114\times10^{-4}$\\
$+0.04$ & $+9.1314\times10^{-8}$ & $+1.557184\times10^{-4}$ & $+1.557114\times10^{-4}$\\
$-0.08$ & $+3.6638\times10^{-7}$ & $-3.113913\times10^{-4}$ & $-3.114228\times10^{-4}$\\
$+0.08$ & $+3.6487\times10^{-7}$ & $+3.114493\times10^{-4}$ & $+3.114228\times10^{-4}$\\
\bottomrule
\end{tabular}
\end{table}
\begin{table}[htbp]\centering
\caption{\textbf{The transect through the Hopf line of the response map, at fixed $a_2=0.017388$.} Nonlinear
entries are full periodic solves at $N=128$, $A=128$ with $17$ temporal nodes, warm-started from design~B,
with the kernel fixed and neither power nor frequency imposed; linear entries are the closed-form response
map. The two points on the steady side return no orbit on the tracked branch (final residuals of about
$1.6\times10^{-2}$ and $9\times10^{-3}$).}\label{tab:transect}
\footnotesize
\begin{tabular}{rrrrrr}
\toprule
$a_1$ & $I_\rho$ (solve) & $I_\rho$ (linear) & difference & $\Omega_{\rm solve}-\Omega_{\rm lin}$ & off-grid defect\\
\midrule
$-0.0201$ & \multicolumn{5}{l}{no orbit found on the tracked branch}\\
$-0.0081$ & \multicolumn{5}{l}{no orbit found on the tracked branch}\\
$+0.0039$ & $5.736\times10^{-5}$ & $5.791\times10^{-5}$ & $-0.96\%$ & $+9.5\times10^{-6}$ & $4.3\times10^{-7}$\\
$+0.0079$ & $1.1444\times10^{-4}$ & $1.1583\times10^{-4}$ & $-1.20\%$ & $+5.4\times10^{-6}$ & $1.8\times10^{-10}$\\
$+0.0149$ & $2.1597\times10^{-4}$ & $2.1717\times10^{-4}$ & $-0.55\%$ & $+4.7\times10^{-6}$ & $1.1\times10^{-15}$\\
$+0.0249$ & $3.6124\times10^{-4}$ & $3.6196\times10^{-4}$ & $-0.20\%$ & $+2.8\times10^{-6}$ & $1.6\times10^{-13}$\\
$+0.0399$ & $5.7965\times10^{-4}$ & $5.7913\times10^{-4}$ & $+0.09\%$ & $-2.0\times10^{-6}$ & $4.2\times10^{-14}$\\
\bottomrule
\end{tabular}
\end{table}

\suppnote{Numerical record behind the Methods}

\noindent This note collects the discretizations, residuals and intermediate numbers that the Methods summarise. Figure numbers refer to the main text.

\subsection{Witness kernels and interfacial sensitivity}
The kernels of Fig.~1a,c belong to the design family described below and share the normalization, $\gamma_1$ and $\gamma_2$ of $q_0$ to $10^{-18}$. The two kernels of Extended Data Fig.~5c share $\gamma_1$ and $\gamma_2$ to $2.4\times10^{-17}$;
each lies within a factor $2.0000013$ of the common base pointwise, and their largest mutual ratio
is $3.5412$. The verdict survives angular refinement from $A=96$ to $512$ modes. A separate
near-threshold pair differs pointwise by at most $1.0408$ and has interfacial growth rates
$+7.8239\times10^{-4}$ and $-7.8598\times10^{-4}$ at the reference discretization; with the
kernels fixed, the signs survive refinement from $A=96$ to $512$, and the difference of the rates
stays within $0.2\%$ of $1.566\times10^{-3}$. The sensitivity $W_m$ was obtained by first-order
perturbation of the band's leading eigenvalue, including the change of the
self-consistent band; at $A=64$ the second-harmonic value $-10.58$ separates into $+0.46$ of
direct damping and $-11.0$ of shape feedback. The values at 64 and 128 modes are tabulated in
Supplementary Note 12.

\subsection{Prediction loss and response landscape}
$D_M$ in Fig.~2c uses the response table at 128 angular modes truncated at harmonic
$m=8$; its values are $0.1670$, $0.0611$, $0.0602$, $0.0481$ and $0.0361$ for $M=0,\ldots,4$. The
minimax statement is that the smallest worst-case error of any predictor using only the matched
rates equals $\varepsilon D_M+O(\varepsilon^2)$ on kernels $q_0(1+h)$ with
$\|h\|_\infty\le\varepsilon$ (Supplementary Note 4). A $9\times9$ grid of fixed five-primitive kernels at $N=32$ spatial and $A=64$ angular modes on the
periodic branch gave 81 converged solves, with power between $1.23\times10^{-3}$ and
$1.36\times10^{-3}$ and frequency between $0.2958$ and $0.2962$ (Supplementary Note 12). The transect of
Fig.~2a is a line of kernels at fixed
second coordinate $a_2=0.01739$ with $a_1$ from $-0.0201$ to $+0.0399$, crossing the Hopf line of
the linear map, the full periodic solve at the five points on the oscillatory side returns the
power within $0.96\%$, $1.20\%$, $0.55\%$, $0.20\%$ and $0.09\%$ of the linear response map and the
frequency within $10^{-5}$; at the two points on the steady side the solver returns no orbit on
the tracked branch, and the linear Hopf line at $a_1=-1\times10^{-4}$ lies inside the nonlinear
bracket $(-0.0081,+0.0039)$. The Floquet moduli of the five orbits at 512 steps per period are
$0.99356$, $0.98694$, $0.97535$, $0.95878$ and $0.93386$, with neutral-mode relative errors below
$2.8\times10^{-6}$ and leading Ritz residuals below $4.5\times10^{-15}$.

\subsection{Local predictor}
Near the simple supercritical Hopf bifurcation of the band, the complex eigenvalue
response and the computed cubic coefficient predict oscillation power and frequency shift. The
coefficients $K_\rho$ and $G_0$ of the reference state were fixed in advance and no per-trajectory
fitting was used. Along the power-maximizing matched direction at kernel budgets
$\varepsilon=0.02$, $0.04$ and $0.08$, predicted and integrated powers agree to $0.18\%$, $0.39\%$
and $0.78\%$, and the nonlinear frequency shifts, measured from $\omega_0$, to $0.32\%$, $0.66\%$
and $1.34\%$; the power residual divided by $\varepsilon^2$ is $1.33$, $1.40$ and
$1.41\times10^{-3}$. On the stable side the band returns to a steady travelling wave at rate
$-0.0031668$ against the linear $-0.0031661$. A matched direction with $\chi_R\simeq0$ and
$\chi_I=3.893\times10^{-3}$ per unit amplitude shows growth drift scaling by $4.0$ and frequency
drift by $2.0$ when its amplitude doubles, and a third kernel designed to share the growth response
of the $\varepsilon=0.04$ kernel but not its $\chi_I$ gives equal powers (difference $0.054\%$) and
a frequency difference $1.5656\times10^{-4}$ against the predicted $1.5571\times10^{-4}$. As a
separate test of the calibrated linear predictor, ten targets drawn at random were solved at $N=96$, $A=256$; all converged, and the largest differences between
prediction and forward output were $2.50\times10^{-9}$ in $I$ and $9.65\times10^{-9}$ in $\Omega$,
with matched moments within $2.3\times10^{-16}$ (Extended Data Fig.~2a).

\subsection{Restricted five-primitive family}
A restricted realization uses five symmetric von Mises turning primitives of concentration $20$
centred at $\pi\pm\delta_j$, $\delta_j=0,0.35,0.70,1.05,1.40$, of which only the selection
probabilities vary; turning-angle statistics of this kind are measurable for run-and-tumble
bacteria. Normalization and two moment constraints leave two coordinates,
and the measured response realizes $\Gamma_b=2$. Exact rational enclosures certify positivity and
moment matching for the delivered kernels. For two fixed-frequency designs the same kernels at
$A=512$ reproduce the requested power increment to $0.025\%$ and the frequency match to
$1.9\times10^{-8}$, while their absolute powers retain approximately $1\%$ finite-grid bias.
Targets at $70\%$, $98\%$ and $120\%$ of the linear fixed-frequency power limit require
probability-change budgets within $0.0031\%$ of the local prediction; the exterior target is solved
but exceeds the imposed budget. For the designed orbit a perturbation of $20\%$ of the oscillatory
density component leaves $1.05\times10^{-4}$ of its initial orbital distance after $140$ cycles;
the leading nontrivial Floquet multiplier in the mass-constrained, angle-even calculation is
$0.94403$, whose decay exponent $-0.0027264$ agrees with the measured $-0.0027100$. A second
design in the same family has a converged multiplier of $0.93160$ at $N=A=128$ with residuals
below $5.7\times10^{-8}$. Integrating over the periodic cell removes spatial fluxes, so each global
angular moment obeys $\dot A_{s,m}=-\gamma_m A_{s,m}$; any uniform asymptotic bound
$d(t)\le Ce^{-\rho t}d(0)$ on a state distance that observes these moments has
$\rho\le\min_{m\le M}\gamma_m$, and in the restricted family $\gamma_2^{-1}\simeq58.05$
(Supplementary Note 6).

\subsection{Joint inverse design and independent integration}
The kernel family of Fig.~4 is $q=q_0(1+h)$, $h=\sum_{j<33}c_j\cos j\varphi$, at
the critical Hopf reference ($\kappa_q=5.5122$, $N=A=128$), with the normalization, $\gamma_1$ and
$\gamma_2$ of $q_0$ imposed through the exact Bessel moments of $q_0\cos j\varphi$. The two hidden
coordinates $a=(a_1,a_2)$ multiply the minimum-budget direction of the linear design for the target
and the direction of equal budget perpendicular to it in response space; the linear design is
$a=(1,0)$. The complex eigenvalue response of each direction, including the self-consistent
reshaping of the interface, was computed with the linearised kinetic solver. The joint problem
solves the periodic orbit, represented by temporal Fourier collocation on 17 nodes, together with
$a$, with the phase condition, the target power and the target frequency as additional equations,
by Newton iteration with a Schur complement on the two kernel coordinates (Supplementary Note
9). The targets were $I^*=5\times10^{-4}$ with $\Omega^*=0.2965$ and $0.2985$, and
$I^*=10^{-3}$ with $\Omega^*=0.2975$, against a reference frequency near $0.2974$. All seven
solves, including three boundary targets at $I^*=2$, $3$ and $4\times10^{-3}$, converged from the
linear design in three or four iterations to residual norms between $5\times10^{-13}$ and
$4\times10^{-11}$; the matched moments hold to $10^{-18}$. The distance of the nonlinear
coordinates from $(1,0)$ is $0.016$, $0.003$, $0.003$ and $0.005$ for m1, m2, m3 and m2$'$ and
$0.007$, $0.011$ and $0.016$ for the boundary targets. The kernel budget is the supremum of $|h|$
on $65{,}536$ angles: $0.067$, $0.069$, $0.081$ and $0.086$ for the four designs of the main text,
and $0.204$, $0.330$ and $0.457$ for the boundary targets, whose kernel minima fall to $0.80$,
$0.67$ and $0.54$ of $q_0$. The forward integrations use the same exponential time stepper as the
periodic solver with 1,024 steps per target period over 8,000 time units. The initial state is the
steady interface of the reference kernel m0, a steady-side design with the same moments and a
budget of $0.029$, plus $0.1\sqrt{5\times10^{-4}/2Q_2}$ times the critical eigenvector, where
$Q_2$ is the power per unit squared amplitude; the drift is removed at each output by projection on
the translation mode, and the critical-mode amplitude $b$ is the projection of the deviation from
the steady interface on the left eigenvector. Under m0 the amplitude decays from $0.58$ to $0.11$
over 4,000 time units and the temporal standard deviation of the density over the final window is
$1.3\times10^{-5}$. Frequency and power over the last 16 periods come from a harmonic fit with six
harmonics; the direct power is the density variance about the temporal mean over the same window.
The fitted frequencies are $0.29650000$, $0.29850000$, $0.29750000$ and $0.29850000$ and the
fitted powers $5.000\times10^{-4}$, $5.000\times10^{-4}$, $1.0000\times10^{-3}$ and
$5.000\times10^{-4}$. Orbit validation runs sixteen cycles of the same stepper from the solved
orbit and reports the relative orbit closure: $5\times10^{-7}$ for m1, m2 and m2$'$,
$3\times10^{-6}$ for m3, and $3.9\times10^{-5}$, $1.7\times10^{-4}$ and $4.3\times10^{-4}$ for the
boundary targets, for which the integration from the perturbed steady interface was not run.
Floquet multipliers come from Arnoldi iteration on the period map at 512 steps per period with
eight requested values; the leading moduli are $0.9435$, $0.9423$, $0.8845$ and $0.9422$, with
residuals below $5\times10^{-15}$ and neutral-mode relative errors below $1.6\times10^{-5}$. For
the boundary target with $I^*=4\times10^{-3}$ the neutral-mode check returned $1.9\times10^{-3}$
at both 512 and 1,024 steps, above the $10^{-3}$ tolerance, and the calculation was rejected; the
stability of that orbit is not established.

\subsection{Constraint sweep}
For each $M$ from 2 to 10 the hidden subspace of the 33-harmonic family was constrained to preserve
$\gamma_1,\ldots,\gamma_M$, and the following were computed: the capacity
$\Gamma_b=\operatorname{rank}(\delta\operatorname{Re}\lambda,\delta\operatorname{Im}\lambda)$ on
the hidden subspace; the region of $(\delta\operatorname{Re}\lambda,\delta\operatorname{Im}\lambda)$
reachable with $\|h\|_\infty\le1$, by linear programming of its support function; its growth
half-range $D_M^{(33)}$; and, for 400 random targets in the box $I\in[4,7]\times10^{-4}$,
$\Omega\in[0.2972,0.2978]$, the smallest budget of a first-order design, with the fixed linear map
from $(\delta\operatorname{Re}\lambda,\delta\operatorname{Im}\lambda)$ to $(I,\Omega)$ of the Hopf
normal form. The reachable area falls from $2.5\times10^{-3}$ at $M=2$ to $6.0\times10^{-4}$,
$7.4\times10^{-5}$ and $1.2\times10^{-5}$ at $M=4$, 6 and 8, and $D_M^{(33)}$ from
$5.3\times10^{-2}$ to $1.1\times10^{-3}$ at $M=8$, after which it levels off; first-order designs within $\|h\|_\infty<1$
reach all 400 targets at $M\le6$, $24.5\%$ at $M=7$ and none at $M\ge8$. The sweep was repeated
with 9, 13, 17, 25 and 33 harmonics (Extended Data Fig.~3). Thirty-one fixed-kernel designs, three
fixed targets at $M=2$--$4$ in the nine-harmonic family, eight held-out targets drawn at random before any kinetic solve at $M=3$ and 4, and the three fixed targets at
$M=5$ and 6 in the 33-harmonic family, were tested by the complete one-cell periodic kinetic
boundary-value problem with the kernel fixed (17 temporal nodes, $N=A=128$); all converged, and
the realised power lies within $0.44\%$ of the target for the nine-harmonic designs and within
$0.93\%$ for the 33-harmonic designs (Extended Data Fig.~2b; Supplementary Note 8).

\subsection{Reaction--diffusion family and simulations}
The family of Fig.~3 has $J=\left(\begin{smallmatrix}0.8&-1\\1&-1\end{smallmatrix}\right)$
and $D=\operatorname{diag}(1,3.5)$, so that the uniform state is Turing-unstable at $k_c=0.5071$
with growth rate $\sigma=0.02278$, and $\max_k|\Delta\lambda|\le7\times10^{-13}$ across the
family. The four-coefficient family used for the phase plane and the designs is
$N_\theta=(\eta_2U^2+\eta_{11}UV-\beta_3U^3-\beta_5U^5,\,0)$, in which $\eta_2$ and $\eta_{11}$
are the quadratic feedbacks and $\beta_3$ the saturation of the autocatalytic step; at
$\eta_2=\eta_{11}=0$ one has $h=2g$ and $\Gamma_b=2$. The reaction--diffusion equation is integrated
pseudo-spectrally with fourth-order exponential time differencing, the exponential
applied to diffusion and the whole reaction treated explicitly, on $L_x=8\pi/k_c$,
$L_y=16\pi/(\sqrt3k_c)$ with $64\times74$ modes, two-thirds dealiasing and $\Delta t=0.5$; the
linear growth rate of the lattice mode at $k_c$ is reproduced as $0.022776$. Morphology is read
from the number of Fourier pairs near $k_c$ above one fifth of the strongest, polarity from the
skewness of $U$ ($+0.86$ for spots at $\eta_2=+0.3$, $-0.85$ for holes at $\eta_2=-0.3$), and the
oblique growth rate from an exponential fit to a $10^{-3}$ perturbation of a converged stripe.
With $\eta_2=0$ the stripe amplitude follows $\sqrt{\sigma/g}\propto\beta_3^{-1/2}$ to within
$1.8\%$ across a factor of ten in $\beta_3$; the amplitude equations place hexagon stability at
$|\eta_2|=0.0559$ and stripe loss at $0.1134$, and simulation confirms the first (hexagons seeded
at $0.045$ decay to stripes, those at $0.06$ persist) and locates the second between $0.150$ and
$0.175$. Raising $\beta_5$ from 0 to 12 at two points of the family leaves every pattern in place
and moves the amplitude by at most $14\%$, the size of the $O(A^2)$ correction a quintic term
produces (Supplementary Note 11). The Jacobian of $(a,g,h)$ was evaluated by central differences
at 200 random kernels of the four- and sixteen-coefficient families and 500 of the 22-coefficient
family. Twin kernels were drawn at random in the sixteen-coefficient space and projected onto
$\Phi^{-1}(a,g,h)$ by minimum-norm Gauss--Newton steps with every coefficient kept below two in
magnitude; six target triples, five kernels each, were run from stripe, hexagon and noise initial
conditions, and within a group the amplitudes differ by $0.1$--$1.8\%$ while the group can sit up
to $15\%$ below the leading-order value. Every kernel of two targets was rerun with two further
seeds, which changed no pattern from stripe or hexagon seeds and the amplitudes by less than
$10^{-5}$. Stripes persist up to $x=a/\sqrt{\sigma g}=1.484$ and are lost by $1.60$, and the
hexagon amplitude error grows from $-3\%$ at $x=0.9$ to $-19\%$ at $x=2.4$. For the amplitude
designs, the closed-form solution for $(\eta_2,\beta_3)$ at the targets
$(A_{\rm stripe},A_{\rm hexagon})=(0.060,0.031)$ and $(0.090,0.050)$ gives, in simulation,
$-0.8\%$ and $-1.1\%$, and $-2.3\%$ and $-5.7\%$; one correction that shifts the targets by the
measured offset brings these to $0.0\%$ and $0.0\%$, and $-0.7\%$ and $-2.4\%$.


\begin{thebibliography}{99}
\bibitem{wilson} Wilson, K. G. The renormalization group: critical phenomena and the Kondo problem. \emph{Rev. Mod. Phys.} \textbf{47}, 773--840 (1975).
\bibitem{marchetti} Marchetti, M. C. et al. Hydrodynamics of soft active matter. \emph{Rev. Mod. Phys.} \textbf{85}, 1143--1189 (2013).
\bibitem{bechinger} Bechinger, C. et al. Active particles in complex and crowded environments. \emph{Rev. Mod. Phys.} \textbf{88}, 045006 (2016).
\bibitem{noid} Noid, W. G. Perspective: Coarse-grained models for biomolecular systems. \emph{J. Chem. Phys.} \textbf{139}, 090901 (2013).
\bibitem{fruchart} Fruchart, M., Hanai, R., Littlewood, P. B. \& Vitelli, V. Non-reciprocal phase transitions. \emph{Nature} \textbf{592}, 363--369 (2021).
\bibitem{you} You, Z., Baskaran, A. \& Marchetti, M. C. Nonreciprocity as a generic route to traveling states. \emph{Proc. Natl Acad. Sci. USA} \textbf{117}, 19767--19772 (2020).
\bibitem{duan} Duan, Y., Agudo-Canalejo, J., Golestanian, R. \& Mahault, B. Phase coexistence in nonreciprocal quorum-sensing active matter. \emph{Phys. Rev. Research} \textbf{7}, 013234 (2025).
\bibitem{turing} Turing, A. M. The chemical basis of morphogenesis. \emph{Phil. Trans. R. Soc. Lond. B} \textbf{237}, 37--72 (1952).
\bibitem{crosshohenberg} Cross, M. C. \& Hohenberg, P. C. Pattern formation outside of equilibrium. \emph{Rev. Mod. Phys.} \textbf{65}, 851--1112 (1993).
\bibitem{machta} Machta, B. B., Chachra, R., Transtrum, M. K. \& Sethna, J. P. Parameter space compression underlies emergent theories and predictive models. \emph{Science} \textbf{342}, 604--607 (2013).
\bibitem{transtrum} Transtrum, M. K. et al. Perspective: Sloppiness and emergent theories in physics, biology, and beyond. \emph{J. Chem. Phys.} \textbf{143}, 010901 (2015).
\bibitem{zwanzig} Zwanzig, R. Memory effects in irreversible thermodynamics. \emph{Phys. Rev.} \textbf{124}, 983--992 (1961).
\bibitem{mori} Mori, H. Transport, collective motion, and Brownian motion. \emph{Prog. Theor. Phys.} \textbf{33}, 423--455 (1965).
\bibitem{torquato} Torquato, S. Inverse optimization techniques for targeted self-assembly. \emph{Soft Matter} \textbf{5}, 1157 (2009).
\bibitem{sherman} Sherman, Z. M., Howard, M. P., Lindquist, B. A., Jadrich, R. B. \& Truskett, T. M. Inverse methods for design of soft materials. \emph{J. Chem. Phys.} \textbf{152}, 140902 (2020).
\bibitem{molesky} Molesky, S. et al. Inverse design in nanophotonics. \emph{Nat. Photon.} \textbf{12}, 659--670 (2018).
\bibitem{saha} Saha, S., Agudo-Canalejo, J. \& Golestanian, R. Scalar active mixtures: the non-reciprocal Cahn--Hilliard model. \emph{Phys. Rev. X} \textbf{10}, 041009 (2020).
\bibitem{brauns} Brauns, F. \& Marchetti, M. C. Nonreciprocal pattern formation of conserved fields. \emph{Phys. Rev. X} \textbf{14}, 021014 (2024).
\bibitem{duanprl} Duan, Y., Agudo-Canalejo, J., Golestanian, R. \& Mahault, B. Dynamical pattern formation without self-attraction in quorum-sensing active matter: the interplay between nonreciprocity and motility. \emph{Phys. Rev. Lett.} \textbf{131}, 148301 (2023).
\bibitem{dinelliNR} Dinelli, A. et al. Non-reciprocity across scales in active mixtures. \emph{Nat. Commun.} \textbf{14}, 7035 (2023).
\bibitem{tailleur08} Tailleur, J. \& Cates, M. E. Statistical mechanics of interacting run-and-tumble bacteria. \emph{Phys. Rev. Lett.} \textbf{100}, 218103 (2008).
\bibitem{baeuerle} B\"auerle, T., Fischer, A., Speck, T. \& Bechinger, C. Self-organization of active particles by quorum sensing rules. \emph{Nat. Commun.} \textbf{9}, 3232 (2018).
\bibitem{lefranc} Lefranc, T. et al. Synthetic quorum sensing and absorbing phase transitions in colloidal active matter. \emph{Phys. Rev. X} \textbf{15}, 031050 (2025).
\bibitem{cates} Cates, M. E. \& Tailleur, J. When are active Brownian particles and run-and-tumble particles equivalent? Consequences for motility-induced phase separation. \emph{EPL} \textbf{101}, 20010 (2013).
\bibitem{solon} Solon, A. P., Cates, M. E. \& Tailleur, J. Active Brownian particles and run-and-tumble particles: a comparative study. \emph{Eur. Phys. J. Spec. Top.} \textbf{224}, 1231--1262 (2015).
\bibitem{dinelli} Dinelli, A., O'Byrne, J. \& Tailleur, J. Fluctuating hydrodynamics of active particles interacting via taxis and quorum sensing: static and dynamics. \emph{J. Phys. A: Math. Theor.} \textbf{57}, 395002 (2024).
\bibitem{micchelli} Micchelli, C. A. \& Rivlin, T. J. A survey of optimal recovery. In \emph{Optimal Estimation in Approximation Theory} 1--54 (Springer, 1977).
\bibitem{grayscott} Gray, P. \& Scott, S. K. Autocatalytic reactions in the isothermal, continuous stirred tank reactor: oscillations and instabilities in the system $A+2B\to3B$, $B\to C$. \emph{Chem. Eng. Sci.} \textbf{39}, 1087--1097 (1984).
\bibitem{epstein} Epstein, I. R. \& Pojman, J. A. \emph{An Introduction to Nonlinear Chemical Dynamics: Oscillations, Waves, Patterns, and Chaos} (Oxford Univ. Press, 1998).
\bibitem{hoyle} Hoyle, R. \emph{Pattern Formation: An Introduction to Methods} (Cambridge Univ. Press, 2006).
\bibitem{ouyang} Ouyang, Q. \& Swinney, H. L. Transition from a uniform state to hexagonal and striped Turing patterns. \emph{Nature} \textbf{352}, 610--612 (1991).
\bibitem{kondo} Kondo, S. \& Miura, T. Reaction-diffusion model as a framework for understanding biological pattern formation. \emph{Science} \textbf{329}, 1616--1620 (2010).
\bibitem{palacci} Palacci, J., Sacanna, S., Steinberg, A. P., Pine, D. J. \& Chaikin, P. M. Living crystals of light-activated colloidal surfers. \emph{Science} \textbf{339}, 936--940 (2013).
\bibitem{frangipane} Frangipane, G. et al. Dynamic density shaping of photokinetic \emph{E. coli}. \emph{eLife} \textbf{7}, e36608 (2018).
\bibitem{lavergne} Lavergne, F. A., Wendehenne, H., B\"auerle, T. \& Bechinger, C. Group formation and cohesion of active particles with visual perception-dependent motility. \emph{Science} \textbf{364}, 70--74 (2019).
\bibitem{risken} Risken, H. \emph{The Fokker--Planck Equation: Methods of Solution and Applications} 2nd edn (Springer, 1996).
\bibitem{levermore} Levermore, C. D. Moment closure hierarchies for kinetic theories. \emph{J. Stat. Phys.} \textbf{83}, 1021--1065 (1996).
\bibitem{maddu} Maddu, S., Weady, S. \& Shelley, M. J. Learning fast, accurate, and stable closures of a kinetic theory of an active fluid. \emph{J. Comput. Phys.} \textbf{504}, 112869 (2024).
\bibitem{peraud} P\'eraud, J.-P. M. \& Hadjiconstantinou, N. G. Extending the range of validity of Fourier's law into the kinetic transport regime via asymptotic solution of the phonon Boltzmann transport equation. \emph{Phys. Rev. B} \textbf{93}, 045424 (2016).
\bibitem{zhaoyong} Zhao, W. \& Yong, W.-A. Boundary conditions for kinetic theory-based models II: a linearized moment system. \emph{Math. Methods Appl. Sci.} \textbf{44}, 14148--14172 (2021).
\bibitem{kato} Kato, T. \emph{Perturbation Theory for Linear Operators} 2nd edn (Springer, 1995).
\bibitem{trefethen} Trefethen, L. N. \emph{Spectral Methods in MATLAB} (SIAM, 2000).
\bibitem{govaerts} Govaerts, W. J. F. \emph{Numerical Methods for Bifurcations of Dynamical Equilibria} (SIAM, 2000).
\bibitem{sandstede} Sandstede, B. Stability of travelling waves. In \emph{Handbook of Dynamical Systems} Vol.~2 (ed. Fiedler, B.) 983--1055 (Elsevier, 2002).
\bibitem{beyn} Beyn, W.-J. An integral method for solving nonlinear eigenvalue problems. \emph{Linear Algebra Appl.} \textbf{436}, 3839--3863 (2012).
\bibitem{knoll} Knoll, D. \& Keyes, D. Jacobian-free Newton--Krylov methods: a survey of approaches and applications. \emph{J. Comput. Phys.} \textbf{193}, 357--397 (2004).
\bibitem{saad} Saad, Y. \& Schultz, M. H. GMRES: a generalized minimal residual algorithm for solving nonsymmetric linear systems. \emph{SIAM J. Sci. Stat. Comput.} \textbf{7}, 856--869 (1986).
\bibitem{hochbruck} Hochbruck, M. \& Ostermann, A. Exponential integrators. \emph{Acta Numer.} \textbf{19}, 209--286 (2010).
\bibitem{hockney} Hockney, R. W. \& Eastwood, J. W. \emph{Computer Simulation Using Particles} (CRC Press, 1988).
\bibitem{kuznetsov} Kuznetsov, Y. A. \emph{Elements of Applied Bifurcation Theory} (Springer, 2023).
\end{thebibliography}
\end{document}